\documentclass[11pt]{amsart}

\usepackage[letterpaper,margin=1in]{geometry}
\usepackage[T1]{fontenc}
\usepackage{lmodern}
\usepackage[expansion=false]{microtype}
\usepackage{amsmath,amssymb,mathtools}
\usepackage{enumitem}
\usepackage{comment}
\IfFileExists{dsfont.sty}{\usepackage{dsfont}}{%
  \let\mathds\mathbf 
}
\usepackage[pdftex, colorlinks=true,linkcolor=blue!52!black, citecolor=red, backref=page]{hyperref}
\usepackage{color,xcolor}

\allowdisplaybreaks
\newtheorem{theorem}{Theorem}[section]
\newtheorem{proposition}[theorem]{Proposition}
\newtheorem{lemma}[theorem]{Lemma}
\newtheorem{corollary}[theorem]{Corollary}
\newtheorem{question}[theorem]{Question}
\theoremstyle{definition}
\newtheorem{definition}[theorem]{Definition}
\theoremstyle{remark}
\newtheorem{remark}[theorem]{Remark}
\newtheorem{example}[theorem]{Example}

\newcommand{\K}{\mathcal K}
\newcommand{\Ped}{\operatorname{Ped}}

\newcommand{\Cu}{\operatorname{Cu}}

\newcommand{\QT}{\operatorname{QT}}

\title[Maximal Algebraic Ideals]
{Maximal Algebraic Ideals in Nonunital $C^*$-Algebras}

\author{Zhichao Liu}
\address{School of Mathematical Sciences,
Dalian University of Technology,
Dalian, {\rm 116024}, China }
\email{lzc.12@outlook.com}

\author{Xin Ma}
\address{Institute for Advanced Study in Mathematics, Harbin Institute of Technology, Harbin, China, {\rm 150001}}
\email{xma17@hit.edu.cn}

\subjclass[2020]{Primary 46L05, 46L35; Secondary 16D25, 19K14}
\keywords{Maximal ideal,  Cuntz
semigroup, generalized singular number, Pedersen ideal}

\begin{document}

\begin{abstract}
Motivated by Ozawa's question of whether every maximal algebraic two-sided ideal in a $C^*$-algebra must be closed, we study the existence of maximal algebraic two-sided ideals in nonunital $C^*$-algebras. 
We formulate singular-distribution estimates intrinsically through lower semicontinuous \(2\)-quasitraces and apply them to control algebraic ideal membership. This allows to develop a novel criterion---the admissible quasitracial projection scale---for establishing the nonexistence of maximal ideals in nonunital $C^*$-algebras. This criterion applies to a wide class of simple $C^*$-algebras, including
\begin{enumerate}
    \item[(i)] all $A\otimes\mathcal K$ where $A$ is unital, simple, stably finite, $\operatorname{QT}_2^1(A)\neq\varnothing$, and the radius of comparison $\operatorname{rc}(A)$ is finite, as well as all their hereditary $C^*$-subalgebras whenever $A$ satisfies some further minor assumptions.

    \item[(ii)] all nonunital, simple, separable, stably finite, $\mathcal Z$-stable $C^*$-algebras $A$ that have an approximate identity consisting of increasing projections $(p_n)$ and for which the simplex $\operatorname{QT}_2^1(A,p_1)$ of normalized traces at $p_1$ has finitely many extreme points;
\end{enumerate}
We also show that a large class of $C^*$-algebras $E$ constructed from extensions of $C^*$-algebras above such that $E$ still has no maximal ideals.
In particular, for these classes, Ozawa’s question could be settled in an unexpected manner.
 \end{abstract}

\maketitle
\tableofcontents

\section{Introduction}

An algebraic ideal of a $C^*$-algebra is a complex-linear two-sided ideal which need not be norm closed. Such ideals are less rigid than norm-closed ideals, but they arise naturally and have been studied since Pedersen's work on measure theory and the work of Pedersen and Petersen on ideals closed under roots \cite{PedersenMeasure,PedersenPetersen}. In particular, every $C^*$-algebra $A$ has a smallest dense ideal, the Pedersen ideal $\Ped(A)$ \cite[Section~5.6]{Pedersen}. In a nonunital algebra this ideal can be proper, even when $A$ is simple, which already shows that algebraic ideals may behave quite differently from their closed counterparts.

It is known that if all algebraic ideals in $A$ are modular (e.g., $A$ is unital), every maximal algebraic ideal is closed. In addition, it is not hard to see all maximal algebraic ideals in $C_0(X, A)$ are closed provided that $X$ is locally compact Hausdorff space and $A$ is a unital $C^*$-algebra. Ozawa asked whether this remains true for a general $C^*$-algebra \cite{OzawaMO}; see also \cite[Question~2.8]{GardellaThiel} and \cite[Remark~5.15]{GKT}. The question remains open in general. Gardella and Thiel proved that every proper algebraic ideal is contained in a prime ideal \cite{GardellaThiel}. Gardella, Kitamura, and Thiel subsequently showed that semiprime ideals are positively spanned, self-adjoint, hereditary, and closed under positive roots \cite{GKT}.  
Throughout the paper, ideals are algebraic complex-linear two-sided
ideals unless specified otherwise.  A \emph{proper ideal} $J$ of $A$
satisfies $0\subsetneq J\subsetneq A$, and a \emph{maximal ideal} is
maximal among these ideals. Recall that a $C^*$-algebra is said to be simple if it has no nonzero proper closed ideals and $A$ is said to be
algebraically simple if $A$ has no nonzero proper algebraic ideals.  It is well-known that $A$ is algebraically simple if and only if $A$ is simple and $\operatorname{Ped}(A)=A$. An obvious example illustrating the difference between these two concepts is the compact operators $\K$, which is simple and its Pedersen ideal consists of the finite-rank operators.

For a simple $C^*$-algebra $A$, every nonzero algebraic ideal is dense; hence any maximal ideal in $A$, \emph{if it exists}, must be non-closed. In this case, the mere existence of such an ideal would already answer Ozawa's question in the negative. For non-simple $C^*$-algebras, by contrast, proper closed ideals may exist, yet it remains unknown whether a maximal ideal---closed or not---always exists. The existence question is therefore fundamental: if no such ideal exists, then the issue of whether it is closed becomes vacuous, and Ozawa's question has no object to which it can apply. This naturally leads to the following question, which may be regarded as a preliminary counterpart to Ozawa's.
\begin{question}\label{q:main-question}
Does every nonunital $C^*$-algebra admit  a maximal ideal?
\end{question}

Zorn's lemma cannot be applied directly here, since in a nonunital algebra an increasing chain of proper ideals may exhaust the algebra. The main technical difficulty in answering this question is to decide membership in maximal ideals. Ordinary Cuntz subequivalence alone does not yield such a conclusion, even when the ideal is maximal. Our main technical contribution is to use generalized singular functions to control membership in nonclosed ideals (Theorem~\ref{thm:square-factorization}). These functions are defined through the cutdown distributions $t\mapsto d_\tau((|x|-t)_+)$ associated with a lower semicontinuous $2$-quasitrace $\tau$ (Definition~\ref{eq:nu-definition}). Their role is to retain quantitative information about spectral decay and to make this information compatible with quasitracial comparison and projection packing. For a faithful trace, this notion coincides with the generalized singular numbers introduced by Fack and Kosaki \cite{FackKosaki} in the associated semifinite von Neumann algebra. Combining exact comparison with a slowly decreasing diagonal factorization argument, we rule out dense maximal algebraic ideals in algebras that admit an \textit{admissible quasitracial projection scale} (Definition~\ref{def:admissible-scale}). The criterion depends primarily on a set $\mathcal{T}$ of certain lower semicontinuous $2$-quasitraces on $A$, each of which may be infinite or finite; the two cases require distinct modes of control. We denote the infinite and finite parts by $\mathcal{T}_{\inf}$ and $\mathcal{T}_{\mathrm{fin}}$, respectively. This separation is what gives the criterion its flexibility and allows it to reach a notably broad range of nonunital settings. Our main result is as follows.

\medskip
\noindent\textbf{Main theorem} (Theorem~\ref{thm:quasitracial-scale} and
Corollary~\ref{cor:simple-quasitracial-scale}).
\emph{A $C^*$-algebra admitting an admissible quasitracial projection
scale has no dense maximal ideals.  In particular, a simple $C^*$-algebra
admitting such a scale has no maximal ideals.}
\medskip

This theorem is then applied to establish the nonexistence of maximal ideals in a substantial range of stable and nonstable classes. These results may be viewed as partial evidence that Ozawa's question admits an affirmative answer: for a large class of simple nonunital $C^*$-algebras $A$, they have no maximal ideals at all.

\begin{corollary}[Theorem \ref{thm:stable-finite}]\label{cor1}
    Let $D$ be unital, simple, and stably finite, with
$\QT^1_2(D)\ne\varnothing$.  If
$\operatorname{rc}_{\mathrm p}(D)<\infty$, then $D\otimes\K$ has no
maximal ideals and its Pedersen ideal is proper.  In particular, this
holds when $\operatorname{rc}(D)<\infty$, hence when $D$ has strict
comparison.
\end{corollary}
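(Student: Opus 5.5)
The plan is to reduce the corollary to the Main theorem, Corollary~\ref{cor:simple-quasitracial-scale}. Since $D\otimes\K$ is simple, it suffices to construct an admissible quasitracial projection scale on $D\otimes\K$ (Definition~\ref{def:admissible-scale}). Properness of the Pedersen ideal will be handled separately. $D\otimes\K$ is simple and nonunital. Its Pedersen ideal contains $1_D\otimes e_{11}$ and hence contains all of $D\otimes M_\infty$, the algebraic union of the corners $D\otimes M_n$. Using $\tau\in\QT_2^1(D)$, extended to $\tau\otimes\mathrm{Tr}$ on $D\otimes\K$, one finds positive elements of infinite trace. An example is $\sum_n 2^{-n/2}\,1_D\otimes e_{nn}$ rescaled so that the $d_\tau$-mass of its cutdowns grows without bound. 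Such elements cannot lie in $\Ped(D\otimes\K)$, because every element of the Pedersen ideal is dominated by a finite sum of elements $(a-\epsilon)_+$, each of finite quasitrace. So the Pedersen ideal is proper.

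For the scale, I take the projections $p_n=1_D\otimes(e_{11}+\dots+e_{nn})$, an increasing approximate unit. The set $\mathcal T$ is $\{\tau\otimes\mathrm{Tr}:\tau\in\QT_2^1(D)\}$ (possibly with its closure), so $\mathcal T=\mathcal T_{\inf}$. Every nonzero trace on $D\otimes\K$ is unbounded, and $d_{\tau\otimes\mathrm{Tr}}(p_n)=n$ uniformly in $\tau$. The uniform linear growth gives the "scale" part of the definition. The remaining admissibility conditions presumably require a comparison property of the following kind. There should be a constant $R$, namely the projection radius of comparison $\operatorname{rc}_{\mathrm p}(D)$. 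Whenever $d_\tau(a)+R\le d_\tau(q)$ for all $\tau\in\mathcal T$, with $q$ a sum of scale projections, we need $a\precsim q$, and projections of the scale should be packable into one another when the trace gap exceeds $R$. The plan is to verify this directly from the definition of $\operatorname{rc}_{\mathrm p}(D)$. The idea is to replace the relative error $R\cdot d_\tau(1_D)$ by the absolute error of $\lceil R\rceil+1$ diagonal slots. Since $d_\tau(p_n)$ grows linearly, this constant loss is negligible in the slowly decreasing diagonal factorization of Theorem~\ref{thm:square-factorization}.

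The main obstacle is matching the precise form of admissibility with finite radius of comparison. A finite $\operatorname{rc}$ gives comparison only up to an additive defect, whereas exact comparison seems to be used in the factorization argument. I would first try absorbing the defect into the scale. I would pass to the subsequence $p_{kn}$ with $k>\operatorname{rc}_{\mathrm p}(D)+1$, so that each scale step has trace gap exceeding the radius, and then check that the singular-distribution estimates survive the reindexing. The last claims follow from the standard inequality $\operatorname{rc}_{\mathrm p}(D)\le\operatorname{rc}(D)$, which holds since projection comparison is a special case of Cuntz comparison. Strict comparison means $\operatorname{rc}(D)=0$.
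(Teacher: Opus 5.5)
Your framework matches the paper's: you take $p_n=1_D\otimes(e_{11}+\dots+e_{nn})$ and $\mathcal T\cong\QT_2^1(D)$ normalized at $p_1$, so that $\tau(p_n)=n$ and $\mathcal T=\mathcal T_{\inf}$. Condition (2) of Definition~\ref{def:admissible-scale} is then immediate, (3) is vacuous, and you conclude by Corollary~\ref{cor:simple-quasitracial-scale}. Your Pedersen-ideal argument is also essentially the paper's, once the wording is fixed. For $h=\sum_n 2^{-n/2}q_n$ it is $d_\tau(h)=\sup_n\tau(p_n)=\infty$ that is infinite, not $\tau(h)$, and no rescaling is needed. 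What you need is that $d_\tau$ is finite on $\Ped(D\otimes\K)$. This holds because $(a-\epsilon)_+\precsim p_map_m\precsim p_m$ for large $m$; the paper packages this as $\Ped(A)\subseteq\mathcal D_\tau(A)$.

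The gap is condition (1). It is the only place where $\operatorname{rc}_{\mathrm p}(D)<\infty$ enters, and you never verify it.
\begin{itemize}
\item Your guessed form, ``$d_\tau(a)+R\le d_\tau(q)\Rightarrow a\precsim q$'' with $a$ arbitrary and $q$ a scale projection, has the wrong direction. It would need $\operatorname{rc}(D)$, because $\operatorname{rc}_{\mathrm p}(D)$ only controls projections on the left. Projection-to-projection packing is not enough either: in Theorem~\ref{thm:square-factorization} the targets are the non-projections $c_n=(1-f_{n-1})(b-\delta_n)_+(1-f_{n-1})$.
\item Your fallback of coarsening to $p_{kn}$ cannot manufacture exact comparison. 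It only changes the values $d_\tau(q)$. For an arbitrary $c$ with $d_\tau(c)$ slightly larger than $d_\tau(q)$, finite $\operatorname{rc}_{\mathrm p}$ still gives no conclusion, however large $k$ is.
\end{itemize}

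The missing observation is that condition (1) already contains the additive buffer. It asks that $d_\tau(q_n)+\rho<d_\tau(c)$ for all $\tau\in\mathcal T$ imply $q_n\precsim c$ for every $c\in A_+$. That is exactly restricted projection comparison, with any $\rho>\operatorname{rc}_{\mathrm p}(D)$. The only real work is transferring it from $M_\infty(D)$ and $\QT_2^1(D)$ to arbitrary $c\in(D\otimes\K)_+$ and $\QT_2^1(D\otimes\K,[q_1])$. The paper does this through the Cuntz-semigroup form of the radius, Proposition~\ref{lem:corner-radius}.

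The ``negligible constant loss'' you anticipate is real, but it happens inside Theorem~\ref{thm:square-factorization}, not by reindexing the scale. There $\Theta$ is chosen by Lemma~\ref{lem:choose-Theta} with $\tau(p_{\Theta(n)})\geq 2\tau(p_n)+\rho$. This forces $d_\tau(c_n)>d_\tau(q_{N+n})+\rho$ at every step of the packing.
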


Note that $\mathcal{T}_{\rm fin}$ is empty for $D\otimes \K$ in Corollary \ref{cor1}. The following applies to the cases that both $\mathcal{T}_{\inf}$ and $\mathcal{T}_{\rm fin}$ may be non-empty. For example, if $A$ is assumed to satisfy real rank $0$ and has finitely many extreme traces, then it is demonstrated in Corollary \ref{cor:hereditary-scale-conditions} that every non-zero hereditary $C^*$-subalgebra still lacks maximal ideals. As a consequence, we present in Example \ref{thm:AF-example} a concrete AF example that both $\mathcal{T}_{\inf}$ and $\mathcal{T}_{\rm fin}$ are non-empty. 

\begin{corollary}[Corollary \ref{cor:z-stable-projectional}]\label{cor2}
Let $A$ be simple, separable, nonunital, stably finite,
and $\mathcal Z$-stable, with an increasing projection approximate identity $(p_n)$.
If $\QT^1_2(A, [p_1])$ has finitely many extreme points, then $A$ has no
maximal ideals.
\end{corollary}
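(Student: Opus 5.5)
The plan is to deduce Corollary~\ref{cor2} from the Main Theorem in its simple form (Corollary~\ref{cor:simple-quasitracial-scale}). By that result it suffices to construct an admissible quasitracial projection scale for $A$ in the sense of Definition~\ref{def:admissible-scale}, and the natural candidate is the given increasing sequence of projections $(p_n)$. The first reduction is to the quasitraces. Since $A$ is simple and $p_1\neq 0$, every nonzero lower semicontinuous $2$-quasitrace $\tau$ on $A$ satisfies $0<\tau(p_1)\le\infty$. Those with $\tau(p_1)<\infty$ are densely finite, because $p_1$ is full and so generates $\Ped(A)$; after normalizing at $p_1$ they form the simplex $\QT_2^1(A,[p_1])$. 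By hypothesis this simplex has finitely many extreme points $\tau_1,\dots,\tau_m$. I would take $\mathcal T=\{\tau_1,\dots,\tau_m\}$ and split it according to $\sup_n\tau_i(p_n)=\infty$, which puts $\tau_i$ in $\mathcal T_{\inf}$, or $\sup_n\tau_i(p_n)<\infty$, which puts it in $\mathcal T_{\mathrm{fin}}$. Because there are only finitely many extreme points, every quasitrace relevant to comparison is a finite positive combination of the $\tau_i$, so uniform estimates over $\mathcal T$ are finite-dimensional.

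The second step is comparison. Since $A$ is $\mathcal Z$-stable, simple, separable, and stably finite, R\o rdam's theorem gives strict comparison of positive elements by lower semicontinuous $2$-quasitraces in $\Cu(A)$, which is almost unperforated. This yields the exact comparison the scale definition requires: if $d_{\tau_i}(a)<d_{\tau_i}(b)$ for all $i$, with the appropriate finiteness, then $a\precsim b$. $\mathcal Z$-stability also gives divisibility. Combined with finite-dimensionality of the trace simplex, this should let me realize prescribed rational, or nearly prescribed, values of $(\tau_1,\dots,\tau_m)$ on subprojections, or at least on Cuntz classes, of the differences $p_{n+1}-p_n$.

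The third step is to verify the growth and packing axioms of an admissible scale. For $\mathcal T_{\inf}$, I would pass to a subsequence of $(p_n)$ so that $\tau_i(p_{n+1}-p_n)$ grows quickly for each infinite $\tau_i$; there are finitely many, so a diagonal choice works. For $\mathcal T_{\mathrm{fin}}$, the tails $\tau_i(1-p_n)$, defined as $\lim_k\tau_i(p_k-p_n)$, tend to $0$, and I would choose the subsequence so that they decay summably. Both requirements can be met simultaneously only because $\mathcal T$ is finite. This is exactly where finitely many extreme points enters: with infinitely many, neither condition could be met uniformly.

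I expect the main obstacle to be the packing condition. The scale presumably asks that blocks of one $\tau$-size be embeddable, in a controlled way, into later corners $p_{n+1}-p_n$ for every $\tau\in\mathcal T$ at once, including mixed cases where some $\tau_i$ is infinite and others are finite. I would first try to use strict comparison together with a margin, for example a strict factor of $2$, obtained from the subsequence choice, so that comparison becomes a strict inequality for each $\tau_i$. If the definition demands actual projections rather than Cuntz subequivalence, I would use stable finiteness and cancellation for projections under strict comparison in the $\mathcal Z$-stable setting to turn $[q]\le[p_{n+1}-p_n]$ into an honest subprojection.
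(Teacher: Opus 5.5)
Your architecture matches the paper's. You take $\mathcal T=\partial_e\QT_2^1(A,[p_1])$, split it into $\mathcal T_{\inf}$ and $\mathcal T_{\mathrm{fin}}$, get condition (1) of Definition~\ref{def:admissible-scale} from almost unperforation of $\Cu(A)$, and use finiteness of $\mathcal T$ for uniformity. The paper packages this as $\operatorname{rc}_{\mathrm p}(p_1Ap_1)=0$ followed by Theorem~\ref{thm:finite-radius-scale}.

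However, your proposal stops before the actual verification, and the step you flag as the main obstacle is not part of the definition. There is no packing hypothesis. The orthogonal copies $r_n$ of the blocks $q_{N+n}$ are produced inside the proof of Theorem~\ref{thm:square-factorization} from condition (1) alone: mere Cuntz subequivalence $q_n\precsim c$, upgraded to a projection by Lemma~\ref{lem:projection-lifting}. So no cancellation, divisibility, or realization of trace values is needed. Nor is any subsequence or summable decay needed. Condition (2) and the tail clause of (3) hold for the given $(p_n)$ because each $\tau(p_n)$ is finite and increasing in $n$ and $\mathcal T$ is finite. The doubling map $\Theta$ is built later, in Lemma~\ref{lem:choose-Theta}; it is not required of the scale.

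What your sketch never addresses is the first clause of condition (3). If $\mathcal T_{\mathrm{fin}}\neq\varnothing$, you need $e\in\Ped(A)_+$ and $\varepsilon_0>0$ with $\inf_{\tau\in\mathcal T_{\mathrm{fin}}}d_\tau((e-\varepsilon_0)_+)>\rho$, where $\rho$ is the same buffer as in condition (1). This is exactly where the paper does extra work. For a general finite $\operatorname{rc}_{\mathrm p}$, the obvious choice $e=p_1$ only gives the value $1$. So Lemma~\ref{lem:finite-radius-scale} passes to $M_m(A)$ with $m>\rho$, which turns the buffer into $\rho/m<1$, and then descends by Lemma~\ref{lem:finite-matrix-descent}.

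In the $\mathcal Z$-stable case you can close this directly. Since $\operatorname{rc}_{\mathrm p}(p_1Ap_1)=0$, condition (1) holds for any $\rho\in(0,1)$, by your finite convex-combination argument or by Proposition~\ref{lem:corner-radius}. Then $e=p_1$ and $\varepsilon_0=\tfrac12$ work, because $d_\tau((p_1-\tfrac12)_+)=\tau(p_1)=1>\rho$. With that choice made explicit, and (2) and (3) checked as above, your direct route is complete. It is slightly shorter than the paper's, since it avoids the matrix amplification. As written, though, this decisive check is missing.
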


We refer to Theorem \ref{thm:finite-radius-scale} for a stronger version of Corollary \ref{cor2}, which particularly includes nonunital simple AF algebras with finitely many traces. The finiteness assumption on the set of extreme normalized quasitraces in Corollary \ref{cor2} is imposed only for convenience and is by no means essential: it can be replaced by a possibly weaker condition that supplies uniform control over all quasitraces in $\mathcal{T}_{\rm fin}$.

In the nonsimple setting, there are still many $C^*$-algebras without maximal ideals. Denote by $\mathcal{C}$ the class of non-unital $C^*$-algebras $A$ without maximal ideal and $\operatorname{Ped}(A)\neq A$. For example, the class $\mathcal{C}$ includes $C^*$-algebras $A$ with $\operatorname{Ped}(A)\neq A$ satisfying the main theorem, which, among others, particularly includes that in Corollaries \ref{cor1} and \ref{cor2}. It is shown in Theorem \ref{thm: extension preserve C} that $\mathcal{C}$ is preserved by $C^*$-algebra extensions. Therefore, starting from main theorem, one constructs many $C^*$-algebras, which are not algebraically simple and have no maximal ideals.  

\subsection*{Outline of the paper} 
The paper is organized as follows. Section~\ref{sec:ideals} collects preliminaries on Cuntz subequivalence and algebraic two-sided ideals in $C^*$-algebras. In Section~\ref{sec:singular}, we introduce generalized singular functions for $C^*$-algebras. Section~\ref{sec:factorization} develops the admissible quasitracial projection scales and establishes a key technical result (Theorem~\ref{thm:square-factorization}) for later use. In Section~\ref{sec:main}, we prove the main theorem. In Section~\ref{sec:stable}, we apply this theorem to construct a class of simple examples without maximal ideals. Finally, in Section~\ref{sec:nonsimple}, we use extensions of $C^*$-algebras to obtain non-simple $C^*$-algebras without maximal ideals.

\section{Preliminaries}\label{sec:ideals}
\subsection{Cuntz comparison}
\begin{definition}\rm
Let $A$ be a $C^*$-algebra.
Let $p$ and $q$ be two projections in $A$. Recall that $p$ is\textit{ Murray--von Neumann equivalent} to $q$ in $A$, written $p\sim_{\rm MvN} q$, if there exists $x\in A$ such that $x^*x=p$ and $xx^*=q$. We will write $p\preceq q$ if $p$ is equivalent to some subprojection of $q$. If $A$ is not unital, let us denote the minimal unitization of $A$ by $A^\sim$.

\end{definition}


\begin{definition}\rm
  ({\bf Cuntz semigroup}) (see, e.g., \cite{CEI,Cu}) Denote by $A_+$ the positive cone of $A$.  Let $a,b\in A_+$. One says that $a$ is \textit{Cuntz subequivalent} to $b$, denoted by $a\precsim_{\rm Cu} b$, if there exists a sequence $(r_n)$ in $A$ such that $r_n^*br_n\rightarrow a$. One says that $a$ is  \textit{Cuntz equivalent} to $b$, denoted by $a\sim_{\rm Cu} b$, if $a\precsim_{\rm Cu} b$ and $b\precsim_{\rm Cu} a$. The \textit{Cuntz semigroup} of $A$ is defined as ${\rm Cu}(A)=(A\otimes\mathcal{K})_+/\sim_{\rm Cu}$. We will denote the class of $a\in (A\otimes\mathcal{K})_+$ in ${\rm Cu}(A)$ by $[a]$. Note that ${\rm Cu}(A)$ is a positively ordered abelian semigroup with zero (or monoid) when equipped with the addition: $[ a]+[b]=[ a \oplus b]$, and the relation:
  $$
   [ a]\leq [ b] \Leftrightarrow a\precsim b,\quad a,b\in (A\otimes\mathcal{K})_+.
  $$
\end{definition}
Let $x\in A_+$ and $t\in \mathbb{R}_+$, and set $e_t(x)=\max\{x-t,0\}=(x-t)_+$.
\begin{lemma}[\cite{R}]\label{R lem}
Let $A$ be a C*-algebra, let $a,b\in A_+$, and let  $p,q$ be  projections. Then

{\rm (i)} $a \precsim b$ if and only if $(a-\varepsilon)_{+} \precsim b$ for all $\varepsilon>0$;

{\rm (ii)} if $\|a-b\|<\varepsilon$, then $(a-\varepsilon)_{+} \precsim b$;

{\rm (iii)} if $0\leq a\leq b$, then $a\precsim b$;

{\rm (iv)} $p \preceq q$ if and only if $p \precsim q$.
\end{lemma}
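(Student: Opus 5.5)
We prove the four items separately, using only the definition of $\precsim$ and functional calculus. We do not use the Cuntz semigroup structure.

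For (iii), suppose $0\le a\le b$. The plan is to build $r_n$ with $r_n^*br_n\to a$. Put $r_n=b^{1/2}(b+1/n)^{-1}a^{1/2}$, working in $A^\sim$ if needed. Then
\[
r_n^*br_n=a^{1/2}\,b^2(b+1/n)^{-2}\,a^{1/2}.
\]
Write $f_n(b)=b^2(b+1/n)^{-2}$, so $0\le f_n\le 1$. We need $\|a^{1/2}(1-f_n(b))a^{1/2}\|\to 0$. Since $1-f_n(b)\le 1$, its square is dominated by itself, and $a\le b$ gives
\[
\|a^{1/2}(1-f_n(b))a^{1/2}\| \le \|(1-f_n(b))^{1/2}a(1-f_n(b))^{1/2}\| \le \|(1-f_n(b))^{1/2}b(1-f_n(b))^{1/2}\| = \|b(1-f_n(b))\|.
\]
The last term is $\sup_t t\bigl(1-t^2/(t+1/n)^2\bigr)\le 2/n\to 0$. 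The $r_n$ lie in $A$, since $a^{1/2}\in A$.

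For (ii), the standard route goes through the lemma: if $\|a-b\|<\varepsilon$, then $(a-\varepsilon)_+=d^*bd$ for some contraction $d$. To see this, write $a-\varepsilon\le b$ in the form $a-\varepsilon\le b$ with $(a-\varepsilon)_+\le$ the compression of $b$ by the spectral cutoff $g(a)$, where $g$ is a continuous function equal to $1$ on $[\varepsilon',\infty)$ and vanishing near $0$. Concretely, $g(a)(a-\varepsilon)g(a)\le g(a)bg(a)$. Choosing $\varepsilon'$ slightly larger than $\varepsilon$ yields $(a-\varepsilon')_+\le \|\cdot\|\, g(a)bg(a)\precsim b$. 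Then (iii), applied to $b^{1/2}g(a)^2b^{1/2}\le b$, together with the fact that $xx^*\sim x^*x$, gives $(a-\varepsilon')_+\precsim b$ for every $\varepsilon'>\|a-b\|$. The strict inequality leaves room to pick $\varepsilon'<\varepsilon$, and this yields $(a-\varepsilon)_+\precsim b$, because $(a-\varepsilon)_+\le\|\cdot\|(a-\varepsilon')_+$ in the sense of (iii) after scaling.

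For (i), the forward direction follows from transitivity and $(a-\varepsilon)_+\le a$ via (iii). Transitivity itself comes from composing approximating sequences with a norm estimate. For the converse, take $r_\varepsilon$ with $\|r_\varepsilon^*br_\varepsilon-(a-\varepsilon)_+\|<\varepsilon$. Since $(a-\varepsilon)_+\to a$, a diagonal choice gives $r_n^*br_n\to a$.

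For (iv), if $p\sim_{\rm MvN} p'\le q$ via $x$ with $x^*x=p$ and $xx^*=p'$, then $x^*qx=x^*p'x=p$, so $p\precsim q$ directly. Conversely, suppose $p\precsim q$. Pick $r$ with $\|r^*qr-p\|<1/2$. Then $prqr^*... $ is not what we want; instead use (ii) to get $(p-1/2)_+=\tfrac12 p\precsim q$ with an exact factorization $\tfrac12p=d^*qd$. Setting $v=\sqrt2\,qd$ gives $v^*v=p$, so $v$ is a partial isometry and $vv^*\le q$ is a projection equivalent to $p$.

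The main obstacle is the exact-factorization step in (ii). Everything else reduces to it or to (iii).
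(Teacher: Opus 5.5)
The paper does not prove this lemma (it is quoted from R{\o}rdam), so your argument has to stand on its own. Items (i), (iii), the forward half of (iv), and (ii) as stated are fine.

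In (iii), your $r_n=b^{1/2}(b+1/n)^{-1}a^{1/2}$ works, and the bound $\|b(1-f_n(b))\|\le 2/n$ is correct.

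In (ii) the bookkeeping is muddled, but the idea is sound. Choose $\|a-b\|\le\varepsilon_1<\varepsilon$ and a continuous $g$ with $0\le g\le 1$, vanishing on all of $[0,\varepsilon_1]$ (not merely near $0$) and equal to $1$ on $[\varepsilon,\infty)$. Then $(a-\varepsilon)_+\le g(a)(a-\varepsilon_1)g(a)\le g(a)bg(a)=y^*y$ with $y=b^{1/2}g(a)$. Since $yy^*\le b$, you get $(a-\varepsilon)_+\precsim b$ directly. No norm factor and no second choice of $\varepsilon'$ is needed.

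The genuine gap is in the converse of (iv). There you use an exact identity $\tfrac12p=d^*bd$ with $b=r^*qr$, and you attribute it to (ii). Your paragraph on (ii) never produces such a $d$. It yields an operator inequality and then only Cuntz subequivalence. Applying (ii) in that form to $p\precsim q$ just returns $\tfrac12 p\precsim q$, which is no progress.

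Upgrading the approximate relation $r_n^*qr_n\to p$ to an exact identity $v^*v=p$ is the whole content of that direction, and it is missing. The general statement $(a-\varepsilon)_+=d^*bd$ is a true lemma of Kirchberg and R{\o}rdam. However, passing from $x\le g(a)bg(a)$ to $x=d^*bd$ with $d\in A$ takes a real argument that you have not supplied.

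For projections there is a short fix.
\begin{itemize}
\item Suppose $\|r^*qr-p\|<1$ and set $x:=pr^*qrp$. Then $\|x-p\|<1$, so $x$ is invertible in the unital corner $pAp$.
\item Put $w=x^{-1/2}\in pAp$ and $v=qrpw\in A$.
\item Then $v^*v=wxw=p$ and $qv=v$, so $vv^*$ is a projection below $q$ that is Murray--von Neumann equivalent to $p$.
\end{itemize}
This invertibility is just your own inequality specialized to $a=p$ with $g(p)=p$, which reads $(1-\varepsilon)p\le pbp$. With this substitution the proof is complete.
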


\begin{definition}\label{def:hereditary-comparison}
For $a\in A_+$, let
$
 A_a:=\overline{aAa}
$
be the hereditary algebra generated by $a$
and write
$
 p_a=1_{(0,\infty)}(a)\in A^{**}
$
for the support (spectral) projection of $a$.
\end{definition}
The following lemma is well-known.
\begin{lemma}\label{lem:projection-lifting}
Let $p\in A$ be a projection and $a\in A_+$. If $ p\precsim a$, then there exists a partial isometry $v\in A$ such that $p=v^*v$ and $vv^*\in A_a$.
\end{lemma}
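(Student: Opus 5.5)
Since $p\precsim a$, Lemma~\ref{R lem}(i) gives $(p-\varepsilon)_+\precsim a$ for each $\varepsilon>0$. For $0<\varepsilon<1$ we have $(p-\varepsilon)_+=(1-\varepsilon)p$, so $p\precsim a$ already means there are $r_n\in A$ with $r_n^*ar_n\to p$. The plan is to turn this approximate relation into an exact partial isometry by compressing with $p$ and taking a polar decomposition.

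\textbf{Approximate isometry.} Choose $r\in A$ with $\|r^*ar-p\|<1/2$. Replacing $r$ by $rp$ keeps the estimate, since $p(r^*ar-p)p=(rp)^*a(rp)-p$ and $\|p\|\le 1$. So we may assume $r=rp$. Put $x=a^{1/2}r\in A$. Then $x^*x=r^*ar$ lies in the corner $pAp$, which is unital with unit $p$, and $\|x^*x-p\|<1/2$. Hence $x^*x$ is invertible in $pAp$, with spectrum in $[1/2,3/2]$.

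\textbf{Polar decomposition.} Let $c=(x^*x)^{-1/2}$, computed in $pAp$, and set $v=xc\in A$. Then
\[
v^*v=c\,x^*x\,c=p .
\]
Moreover $v$ is a partial isometry, because $vv^*$ is a projection: $(vv^*)^2=v(v^*v)v^*=vpv^*=vv^*$, where $vp=v$ holds since $c\in pAp$.

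\textbf{Range in $A_a$.} We have $vv^*=a^{1/2}\,(rc^2r^*)\,a^{1/2}\in a^{1/2}Aa^{1/2}$. Since $a^{1/2}\in\overline{aA}$ and $a^{1/2}\in\overline{Aa}$, this set is contained in $\overline{aAa}=A_a$.

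\textbf{Main obstacle.} The only delicate point is making the approximation exact. This is handled by invertibility in the corner $pAp$, which is available because $p$ is a projection, and by Lemma~\ref{R lem}(i) giving a strict inequality $\|r^*ar-p\|<1$.
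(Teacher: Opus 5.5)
Your proof is correct. The paper gives no proof of this lemma; it simply calls it well known. What you wrote is the standard argument, and every step checks out:
\begin{itemize}
\item compressing $r$ to $rp$ preserves the estimate;
\item $x^*x=r^*ar$ is invertible in the unital corner $pAp$ because $\|x^*x-p\|<1/2$;
\item $v=x(x^*x)^{-1/2}$ satisfies $v^*v=p$ and $vp=v$, so $vv^*$ is a projection;
\item $vv^*\in a^{1/2}Aa^{1/2}\subseteq\overline{aAa}$.
\end{itemize}

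Two small clean-ups. First, the detour through Lemma~\ref{R lem}(i) is unnecessary, and your closing remark attributes the strict inequality to the wrong source. The definition of $p\precsim a$ already gives $r_n\in A$ with $r_n^*ar_n\to p$ in norm, and that is what lets you pick $r$ with $\|r^*ar-p\|<1/2$. Second, handle $p=0$ separately by taking $v=0$, since $pAp$ is unital with unit $p$ only when $p\neq 0$.

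An equally short alternative uses the exact factorization quoted in Remark~\ref{rem:semiprime-Cu-gap}: $(p-\varepsilon)_+=y^*(a-\delta)_+y$. Since $(p-\varepsilon)_+=(1-\varepsilon)p$, this yields $p=z^*az$ exactly, and $v=a^{1/2}z$ works at once. Your route avoids that factorization lemma at the cost of the inverse-square-root normalization.
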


\begin{definition}\rm {\rm (}\cite{BK,BH}{\rm )}
 A (\textit{bounded}) \textit{quasitrace} on a $C^*$-algebra $A$ is a function $\tau: A \rightarrow \mathbb{C}$ such that:

(i) $0 \leq \tau\left(x^* x\right)=\tau\left(x x^*\right)$ for all $x$ in $A$;

(ii) $\tau$ is linear on commutative ${ }^*$-subalgebras of $A$;

(iii) If $x=a+i b$ with $a, b$ self-adjoint, then
$
\tau(x)=\tau(a)+i \tau(b).
$

If $\tau$ extends to a quasitrace on $M_2(A)$, then $\tau$ is called a 2-quasitrace. A linear quasitrace is a trace.

If $A$ is unital and $\tau(1)=1$, then we say  $\tau$ is \textit{normalized}. Denote by $\QT^1_2(A)$ the space of all the normalized 2-quasitraces on $A$ and by $T(A)$ the space of all the tracial states on $A$. 
\end{definition}
\begin{remark}\label{quasi tr}
It is an open question  whether every 2-quasitrace on a $C^*$-algebra is a trace (asked by Kaplansky). A theorem of Haagerup \cite{Haagerup} says that if $A$ is exact and unital then every bounded 2-quasitrace on $A$ is a trace. This theorem can be extended to obtain that every lower semicontinuous 2-quasitrace (not necessarily bounded) on an exact $C^*$-algebra must be a trace (see \cite[Remark 2.29(i)]{BK}). Brown and Winter \cite{BW}  presented a short proof of Haagerup's result in the finite nuclear dimension case.
\end{remark}

\begin{definition}\rm \label{d def}
   Let $\tau$ be a lower semicontinuous $2$-quasitrace on $A$, extended canonically to $A\otimes\mathcal K$. We define a map $d_\tau:(A\otimes \mathcal{K})_+\rightarrow[0,\infty]$ by
  $$
  d_\tau(a)=\lim_{n\rightarrow\infty}\tau(a^{1/n}).
  $$
  It has the following properties:

  (1) if $a\precsim b$, then $d_\tau(a)\leq d_\tau(b)$;

  (2) if $a$ and $b$ are mutually orthogonal, then $d_\tau(a+b)=d_\tau(a)+ d_\tau(b)$;

  (3) $d_\tau((a-\varepsilon)_+)\rightarrow d_\tau(a)$ ($\varepsilon\rightarrow 0$).

   This map depends only on the Cuntz equivalence class of $a\in A\otimes \mathcal{K}$. Hence, we will also $d_\tau$ to denote the induced functional on ${\rm Cu}(A)$.
\end{definition}
\begin{definition}\label{def:dimension-finite}
The \emph{dimension-finite ideal} of $\tau$ is
$$
 \mathcal D_\tau(A)
 =\operatorname{span}\{a\in A_+:d_\tau(a)<\infty\}.
$$
The quasitrace $\tau$ is \emph{densely finite} if
$d_\tau(a)<\infty$ for every $a\in\Ped(A)_+$, equivalently
$\Ped(A)\subseteq\mathcal D_\tau(A)$. For lower semicontinuous
$2$-quasitraces this is equivalent to being finite on a dense ideal,
the usual definition of dense finiteness; see \cite{BK,ERS}.
It is not hard  to see $\mathcal D_\tau(A)$ is an
algebraic ideal with positive cone
$\{a\in A_+:d_\tau(a)<\infty\}$ and is contained in $N_\tau=\{a\in A: \tau(a^*a)<\infty\}$.
\end{definition}
\subsection{Algebraic ideals}

\begin{definition}\label{def:algebraic-terms}
A $C^*$-algebra $A$ is \emph{simple} if it has no nontrival closed ideals.  It is \emph{algebraically simple} if it contains no nontrival ideals. An ideal $J$ of
$A$ is \emph{semiprime} if
$$
\{xy: x,y\in I\}\subseteq J\quad\Longrightarrow\quad I\subseteq J
$$
for every algebraic two-sided ideal $I\lhd A$.
\end{definition}
A simple unital $C^*$-algebra is algebraically simple. A nonunital simple $C^*$-algebra need not be algebraically simple.There is one especially important and well-behaved ideal in any $C^*$-
algebra; see \cite[Section~5.6]{Pedersen}.

\begin{theorem} \label{thm:dense-maximal}
Let $A$ be a ${C}^*$-algebra, there is a dense ideal ${\rm Ped}(A)$ in $A$ which is contained in every dense ideal of $A$. ${\rm Ped}(A)$ is called the Pedersen ideal of $A$, and has the following properties:

{\rm (i)} it contains all projections in $A$;

{\rm (ii)} it is self-adjoint, hereditary, and spanned by its positive elements.

{\rm (iii)} it is strongly invariant:
 $x^* x \in {\rm Ped}(A) \Leftrightarrow x x^* \in {\rm Ped}(A).$ 
\end{theorem}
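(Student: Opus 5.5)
The plan is to construct $\Ped(A)$ explicitly as in \cite[Section~5.6]{Pedersen}. Let $K_0=\{f(a): a\in A_+,\ f\in C_c((0,\infty))_+\}$, where $f(a)$ is given by continuous functional calculus. Let $K_+$ be the set of $x\in A_+$ for which there are $y_1,\dots,y_n\in K_0$ with $x\le\sum y_i$. Put $K=\operatorname{span}K_+$. The proof then proceeds in four steps.

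\textbf{Step 1: $K_+$ is a cone, hereditary, and unitarily invariant.} Closure under sums and under passing to smaller positive elements is immediate from the definition. For invariance, use $u f(a)u^*=f(uau^*)$ for unitaries $u\in A^\sim$. From these facts, and the standard identity $x^*yx\le\|y\|\,x^*x$ combined with a polarization argument, $K$ is a two-sided ideal and $K\cap A_+=K_+$. This also gives (ii) directly.

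\textbf{Step 2: (iii), strong invariance.} If $x^*x\in K_+$, we must show $xx^*\in K_+$. Take the polar decomposition $x=v|x|$ in $A^{**}$. The key facts are that $v f(x^*x)v^*=f(xx^*)$ for $f\in C_c((0,\infty))$, and that $v(\cdot)v^*$ preserves order. Write $x^*x\le\sum f_i(a_i)$; it suffices to treat one term. Then handle it either by choosing $g\in C_c((0,\infty))$ with $g=1$ near the relevant support, or by the standard trick $v y v^*\in K_+$ for $y\in K_0$ via $v f(a)^{1/2}\in A$. I expect this to be the main technical obstacle, because $v$ is not in $A$ and each product has to be checked to land in $A$.

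\textbf{Step 3: density and minimality.} Density holds because $f_\varepsilon(a)\to a$ for $a\in A_+$, where $f_\varepsilon(t)$ is $0$ for $t\le\varepsilon$ and $t-\varepsilon$ beyond. For minimality, let $I$ be a dense ideal, $a\in A_+$, and $f\in C_c((0,\infty))_+$ vanishing on $[0,\delta]$. Since $I$ is dense, $I\cap A_+$ contains some $b$ with $\|a-b\|<\delta/2$. By Lemma~\ref{R lem}(ii), $(a-\delta/2)_+\precsim b$, and quantitatively $(a-\delta/2)_+=r^*br$ for some $r\in A$ (R{\o}rdam's lemma). Then $f(a)=g(a)(a-\delta/2)_+g(a)$ with $g(a)\in A$, so $f(a)\in I$. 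Hence $K_0\subseteq I$. Because dense ideals of a $C^*$-algebra are hereditary on $K_0$-dominated elements (use $x\le y\Rightarrow x^{1/2}=uy^{1/2}$ with $u\in A$ after a small cutoff), we get $K_+\subseteq I$ and so $K\subseteq I$.

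\textbf{Step 4: (i), projections.} For a projection $p$, take $f\in C_c((0,\infty))$ with $f(1)=1$. Then $p=f(p)\in K_0$.
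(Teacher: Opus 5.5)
The paper gives no proof of this statement. It simply cites Pedersen's book (Section~5.6), and your $K_0\subseteq K_+\subseteq K$ construction is the one used there. Step~1, Step~4, density, and the inclusion $K_0\subseteq I$ for a dense ideal $I$ (via the Kirchberg--R{\o}rdam factorization) are fine. You rightly flag Step~2 as the obstacle, but neither suggested fix works, and the reduction to one term already fails. Conjugation by the partial isometry $v$ of $x$ does not keep you inside $A$ when the $a_i$ are unrelated to $x$. For example, in $A=C_0(\mathbb R)$ take:
$x\geq0$ with $\lVert x\rVert\leq1$ and $x>0$ exactly on $(0,1)$;
$a\in A_+$ with $a=1$ on $[-1,2]$;
$f\in C_c((0,\infty))$ with $0\leq f\leq 1$ and $f(1)=1$.
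Then $x^*x\leq f(a)$, but $vf(a)v^*=vf(a)^{1/2}=\chi_{(0,1)}\notin A$. So $xx^*\leq\sum_i vf_i(a_i)v^*$ holds only in $A^{**}$, and heredity of $K_+$ cannot be applied to it. Likewise $vf(a)^{1/2}\in A$ holds for $a=x^*x$, where $vf(x^*x)^{1/2}=x\,h(x^*x)$ with $h(t)=f(t)^{1/2}t^{-1/2}$, but not for general $a$.

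The hereditary step in Step~3 has the same defect. The relation $x\leq y$ does not give $x^{1/2}=uy^{1/2}$ with $u\in A$: take $y(t)=t$ and $x(t)=t\sin^2(1/t)$ in $C_0((0,1])$. The valid factorization $x^{1/2}=uy^{\alpha}$ with $\alpha<1/2$ would require $y^{\alpha}\in I$ for $y=\sum_i f_i(a_i)$, and a dense algebraic ideal is not known to satisfy this.

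The missing idea is to combine local units with a Riesz-type splitting of the sum. Put $y_i=f_i(a_i)$ and $y=\sum_i y_i$, and pick $g_i\in C_c((0,\infty))_+$ with $g_if_i=f_i$. For $z\in A$ with $z^*z\leq y$, let $d_m=(1/m+y)^{-1/2}$, $w_{i,m}=zd_my_i^{1/2}$, and let $w_i$ be the norm limit of $w_{i,m}$. The limit exists because $\lVert w_{i,m}-w_{i,n}\rVert\leq\lVert(d_m-d_n)y\rVert\to0$. One checks $\lVert zz^*-\sum_i w_{i,m}w_{i,m}^*\rVert\leq 1/m$ and $y_i^{1/2}g_i(a_i)=y_i^{1/2}$. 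Hence $zz^*=\sum_i w_iw_i^*$ and $w_i=w_ig_i(a_i)$, that is, $zz^*=\sum_i w_i\,g_i^2(a_i)\,w_i^*$ with $g_i^2(a_i)\in K_0$.

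Taking $z=x^{1/2}$ for $x\in K_+$ gives $x\in\operatorname{span}(AK_0A)\subseteq I$, which completes minimality. For (iii), combine the same identity with the invariance of $K_+$ under $c\mapsto wcw^*$ for all $w\in A^\sim$. That invariance follows by writing $w$ as a combination of four unitaries and using $(\sum_{j=1}^4c_j)^*(\sum_{j=1}^4c_j)\leq 4\sum_{j=1}^4c_j^*c_j$, together with unitary invariance and heredity. Your instinct to use a $g$ with $g=1$ on the support is right for a single term, since then $x=xg(a)$; the splitting above is what makes it work for sums.
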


\begin{remark}
If $A$ is simple, let $p\in A$ be a nonzero projection. Then $p$ is  a full projection, and the ideal generate by $p$ is exactly
$\Ped(A).$
\end{remark}

\begin{definition}\label{def:pedersen-quotient}
For a nonzero $C^*$-algebra $A$, denote by $\Sigma(A)$ the algebraic quotient algebra
$
A/\Ped(A).
$
In general, $\Ped(A)$ need not be norm closed, so $\Sigma(A)$ need not be a $C^*$-algebra.
\end{definition}

\begin{theorem}\label{thm:dense-closed}
Let $A$ be a nonzero $C^*$-algebra and let $M$ be a maximal ideal.
Exactly one of the following holds.
\begin{enumerate}[label=\textup{(\roman*)},leftmargin=2.4em]
\item $M$ is dense, $\Ped(A)\subseteq M$, and
      $\Sigma(A)/(M/\Ped(A))$ is nonzero and algebraically simple;
\item $M$ is norm closed and $A/M$ is algebraically simple.
\end{enumerate}
Conversely, ideals $N\lhd\Sigma(A)$ with $\Sigma(A)/N$ nonzero and
algebraically simple lift to dense maximal ideals of $A$, and a nonzero proper closed ideal $J$ is maximal
algebraically whenever $A/J$ is algebraically simple.
\end{theorem}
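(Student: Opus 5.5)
The plan is to split according to whether $\overline{M}$ equals $A$, and in each case to reduce maximality of $M$ to algebraic simplicity of a quotient via the lattice correspondence for ideals.

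Since the norm closure $\overline{M}$ of an algebraic ideal is again an ideal, containing $M$, maximality gives $\overline{M}=M$ or $\overline{M}=A$. These two alternatives are mutually exclusive: $M$ is proper, so it cannot be both closed and dense.

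\emph{Dense case.} Suppose $\overline{M}=A$. By Theorem~\ref{thm:dense-maximal}, $\Ped(A)$ is contained in every dense ideal, so $\Ped(A)\subseteq M$. The quotient map $\pi\colon A\to\Sigma(A)$ is surjective, so it induces an inclusion-preserving bijection between ideals of $A$ containing $\Ped(A)$ and ideals of $\Sigma(A)$, with $I\mapsto I/\Ped(A)$.

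Every ideal containing $M$ automatically contains $\Ped(A)$. Hence maximality of $M$ in $A$ is equivalent to maximality of $N:=M/\Ped(A)$ among proper ideals of $\Sigma(A)$. By the third isomorphism theorem, $\Sigma(A)/N\cong A/M$, which is nonzero because $M\ne A$.

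An ideal of $\Sigma(A)/N$ pulls back to an ideal of $\Sigma(A)$ containing $N$, so maximality of $N$ says exactly that $\Sigma(A)/N$ has no ideals other than $0$ and itself. This is algebraic simplicity in the sense of the paper's convention, which concerns proper nonzero ideals.

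One subtlety is whether ``algebraically simple'' should also require nonzero multiplication. I will follow the paper's Definition~\ref{def:algebraic-terms}, which only forbids nontrivial ideals, so no extra condition is needed.

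\emph{Closed case.} Suppose $M=\overline{M}$. Then $A/M$ is a nonzero $C^*$-algebra. The same correspondence theorem, with $M$ in place of $\Ped(A)$, shows that $M$ is maximal if and only if $A/M$ has no nonzero proper ideals, i.e.\ $A/M$ is algebraically simple.

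\emph{Converse.} Let $N\lhd\Sigma(A)$ with $\Sigma(A)/N$ nonzero and algebraically simple, and set $M:=\pi^{-1}(N)$. Then $M\supseteq\Ped(A)$, so $M$ is dense, and $M\ne A$ because $N\ne\Sigma(A)$. Any ideal $I$ with $M\subseteq I\subseteq A$ maps under $\pi$ onto an ideal between $N$ and $\Sigma(A)$; here surjectivity of $\pi$ is what guarantees that $\pi(I)$ is an ideal. Algebraic simplicity of the quotient forces $\pi(I)=N$ or $\pi(I)=\Sigma(A)$. Since $I\supseteq\ker\pi$, this gives $I=M$ or $I=A$, so $M$ is maximal.

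For a nonzero proper closed ideal $J$ with $A/J$ algebraically simple, the identical pullback argument through $A\to A/J$ shows $J$ is maximal.

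The only point needing care is that images of ideals under a surjective homomorphism are ideals, and that ideals containing the kernel are recovered as preimages of their images. Both facts are standard. I do not expect any genuine obstacle: the statement is a direct consequence of the minimality property of $\Ped(A)$ together with the correspondence theorem.
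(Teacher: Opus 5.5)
Your proposal is correct and follows the same route as the paper's proof. You split on density: a dense maximal ideal contains $\Ped(A)$ by minimality, and a non-dense one equals its closure by maximality. Each case, and both converses, then reduce to the ideal correspondence theorem for the relevant quotient. You also spell out two points the paper leaves implicit: that (i) and (ii) are mutually exclusive, and the standard facts about images and preimages of ideals. One small addition: the paper's convention requires maximal ideals to be nonzero, and $\pi^{-1}(N)$ is nonzero because it contains the dense, hence nonzero, ideal $\Ped(A)$.
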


\begin{proof}
If $M$ is dense, Theorem~\ref{thm:dense-maximal} gives
$\Ped(A)\subseteq M$, and the correspondence theorem gives the assertion in
the quotient.  Conversely, the inverse image of such an ideal $N$ is maximal,
contains the nonzero ideal $\Ped(A)$, and is dense.

Suppose that $M$ is not dense.  Then $\overline M$ is a proper algebraic
ideal containing $M$.  Maximality gives $M=\overline M$.  Ideals of $A/M$
lift to ideals of $A$ containing $M$, so $A/M$ is algebraically simple.  The
last assertion follows from the same ideal correspondence.
\end{proof}

\begin{corollary}
\label{cor:no-max-exact}
A nonzero $C^*$-algebra $A$ has no maximal ideals if and only if both of the
following conditions hold:
\begin{enumerate}[label=\textup{(\roman*)},leftmargin=2.4em]
\item $\Sigma(A)$ has no nonzero algebraically simple quotient.
\item there is no nonzero proper closed ideal $J\lhd A$ such that $A/J$ is
      algebraically simple.
\end{enumerate}
\end{corollary}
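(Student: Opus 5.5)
The corollary follows directly from Theorem~\ref{thm:dense-closed}, which sorts every maximal ideal into one of two exclusive types. We prove the two implications separately.

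\emph{Forward direction.} Assume $A$ has no maximal ideals. For (i), suppose $\Sigma(A)$ has a nonzero algebraically simple quotient $\Sigma(A)/N$, where $N\lhd\Sigma(A)$. By the converse part of Theorem~\ref{thm:dense-closed}, the preimage of $N$ under $A\to\Sigma(A)$ is a dense maximal ideal of $A$, a contradiction. For (ii), suppose $J$ is a nonzero proper closed ideal with $A/J$ algebraically simple. The last assertion of Theorem~\ref{thm:dense-closed} says $J$ is algebraically maximal, again a contradiction.

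\emph{Reverse direction.} Assume (i) and (ii) hold, and let $M$ be a maximal ideal. By Theorem~\ref{thm:dense-closed}, one of two cases occurs.
\begin{itemize}
\item $M$ is dense. Then $\Ped(A)\subseteq M$ and $\Sigma(A)/(M/\Ped(A))$ is a nonzero algebraically simple quotient of $\Sigma(A)$, contradicting (i).
\item $M$ is norm closed with $A/M$ algebraically simple. Since $M$ is proper in the paper's convention, it is nonzero and proper, so $J=M$ contradicts (ii).
\end{itemize}
Hence no maximal ideal exists.

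\emph{Points to check.} The only step needing care is that the conventions agree. A proper ideal is, by the paper's convention, nonzero, and "algebraically simple" here must mean nonzero with no nontrivial ideals. In particular, $\Sigma(A)/N$ is nonzero exactly when $N$ is a proper ideal of $\Sigma(A)$.

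When lifting $N$, we need the preimage of $N$ to be a proper nonzero ideal of $A$. It is nonzero because it contains $\Ped(A)$, which is dense in the nonzero algebra $A$ and hence nonzero. It is proper because the quotient is nonzero. Maximality then comes from the correspondence theorem, as already recorded in Theorem~\ref{thm:dense-closed}.

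In the degenerate case $\Ped(A)=A$, we have $\Sigma(A)=0$, so condition (i) holds automatically.
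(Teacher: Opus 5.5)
Your proof is correct and is the intended argument: the paper gives no separate proof and treats this corollary as immediate from Theorem~\ref{thm:dense-closed}, using its dichotomy for the reverse direction and its two converse assertions for the forward direction, exactly as you do. Your convention checks are the right ones to make explicit: maximal ideals are nonzero, the lift of $N$ contains $\Ped(A)\neq 0$, and when $\Ped(A)=A$ one has $\Sigma(A)=0$, so condition (i) holds vacuously.
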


\begin{proposition}(\cite[Proposition 5.16]{GKT})\label{prop:max-semiprime}
Every maximal ideal of a $C^*$-algebra is prime and thus semiprime.
\end{proposition}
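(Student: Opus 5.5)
The statement is cited from \cite[Proposition 5.16]{GKT}, but I would reprove it directly. Let $M$ be a maximal ideal of $A$, so $0\subsetneq M\subsetneq A$. To show $M$ is prime, I take ideals $I,J\lhd A$ with $IJ\subseteq M$ and suppose $I\not\subseteq M$ and $J\not\subseteq M$. By maximality $M+I=A$ and $M+J=A$, hence
$$A^2=(M+I)(M+J)\subseteq M+IJ\subseteq M.$$
For a $C^*$-algebra, $A^2=A$: every $a\in A$ factors as $a=u|a|^{1/2}$ (polar decomposition with $u$ in $A^{**}$ and $u|a|^{1/2}\in A$), or, more simply, by Cohen factorization every element is a product of two elements. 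Then $A=A^2\subseteq M$, contradicting properness. So $M$ is prime.

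Prime implies semiprime. If $I\lhd A$ satisfies $\{xy:x,y\in I\}\subseteq M$, then $I\cdot I\subseteq M$. Applying primeness with both ideals equal to $I$ gives $I\subseteq M$. This is exactly the definition of semiprime in Definition~\ref{def:algebraic-terms}.

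The one point needing care is the notion of product. Primeness is phrased via the ideal $IJ$ spanned by products, and the containment $(M+I)(M+J)\subseteq M+IJ$ should be checked at the level of linear spans; this is routine. The only genuinely analytic input is $A^2=A$, and Cohen factorization handles it.
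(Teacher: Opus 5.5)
Your argument is correct and is the standard one behind the cited result \cite[Proposition 5.16]{GKT}, which the paper quotes without proof. Maximality forces $M+I=M+J=A$, so $A=A^2\subseteq M+IJ\subseteq M$, and $A^2=A$ via Cohen factorization is exactly the input the paper itself invokes in Lemma~\ref{lem:finite-matrix-descent}. One small slip: with $u$ the polar partial isometry, the factorization is $a=(u|a|^{1/2})\,|a|^{1/2}$, not $a=u|a|^{1/2}$, though your Cohen factorization remark already covers this.
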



We use the following exact consequences of
\cite[Theorem~5.2 and Corollary~5.5]{GKT}.

\begin{proposition}\label{prop:semiprime-structure}
Let $J$ be a semiprime algebraic ideal of a $C^*$-algebra $A$.  Then:
\begin{enumerate}[label=\textup{(\roman*)},leftmargin=2.4em]
\item $J$ is idempotent, positively spanned, and self-adjoint;
\item $J$ is hereditary: $0\leq x\leq y\in J_+$ implies $x\in J$;
\item $J$ is strongly invariant:
      $x^*x\in J$ if and only if $xx^*\in J$;
\item $y\in J_+$ implies $y^{1/2}\in J$.
\end{enumerate}
\end{proposition}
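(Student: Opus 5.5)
The statement is Proposition~\ref{prop:semiprime-structure}, which the paper cites as an exact consequence of \cite[Theorem~5.2 and Corollary~5.5]{GKT}. I will prove it directly, using only the semiprime hypothesis and standard $C^*$-algebraic factorization.

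The plan is built around the single tool $J^2=J$. The first step is \textbf{idempotence}. Let $J^2$ be the span of products $xy$ with $x,y\in J$; it is a two-sided ideal contained in $J$. Apply the semiprime condition to $I=J$: this requires $\{xy:x,y\in J\}\subseteq J$, which holds trivially. That alone does not give $J\subseteq J^2$, so I need an intermediate ideal. I will apply semiprimeness of $J^2$ rather than $J$, or more precisely consider $K=\{x\in A: AxA\subseteq J^2\}$-type radicals.

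The cleaner route is a square-root trick. For $x\in J$ write $x^*x$. I first show $J$ is self-adjoint and positively spanned, and then derive idempotence from (iv).

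\textbf{Self-adjointness.} Consider the ideal $I=J\cap J^*$, or more usefully $I=J^*$. Products $x^*y^*=(yx)^*$ with $x,y\in J$ need not lie in $J$ directly. Instead I take $I=\operatorname{span}(AJ^*J A)$-like ideals and show $I^{(2)}\subseteq J$. Concretely, the ideal $L$ generated by $\{x^*: x\in J\}$ satisfies $L\cdot L\subseteq J$ whenever $J^*J\subseteq J$, which holds since $J$ is an ideal. Hence semiprimeness gives $J^*\subseteq J$.

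\textbf{Positive roots, heredity and positive spanning.} For $y\in J_+$, consider the ideal $I$ generated by $y^{1/2}$. Its products are spanned by elements $a y^{1/2} b y^{1/2} c$. I need to place these in $J$, which is the main obstacle, since $y^{1/2}by^{1/2}$ is not obviously in the ideal generated by $y$. I will attack this in two steps:
\begin{enumerate}
\item Use the ideal generated by $y^{1/4}$ and Pedersen-style factorization $y^{1/2}by^{1/2}=y^{1/4}(y^{1/4}by^{1/4})y^{1/4}$.
\item Iterate with the semiprime condition applied to the ideals generated by $y^{1/2^k}$. Membership of $y^{1/2}$ would then follow from the chain of inclusions that semiprimeness supplies, although this step may need the GKT argument via prime ideals (Proposition~\ref{prop:max-semiprime} style) to write $J$ as an intersection of prime ideals.
\end{enumerate}

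For prime ideals the argument is more direct: if $P$ is prime and $y\in P_+$, then $(Ay^{1/2}A)^2\subseteq AyA\subseteq P$ modulo the above factorization, which forces $y^{1/2}\in P$.

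Once roots are available, the remaining properties follow:
\begin{itemize}
\item \textbf{Heredity.} If $0\le x\le y\in J$, then $x^{1/2}=uy^{1/4}$ for some $u\in A$, so $x=uy^{1/2}u^*\in J$.
\item \textbf{Strong invariance.} If $x^*x\in J$, then $xx^*=x(x^*x)^{0}\cdots$; I will use the polar decomposition $x=v|x|$ with $|x|=(x^*x)^{1/2}\in J$, giving $xx^*=v|x|^2v^*$, and with the standard factorization $x=u|x|^{1/2}$ this gives $xx^*=u(x^*x)^{1/2}u^*\in J$.
\item \textbf{Positive spanning.} For $x\in J$, self-adjointness gives $\operatorname{Re}x,\operatorname{Im}x\in J$. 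For self-adjoint $h\in J$, the element $h^2\in J_+$ gives $|h|\in J$ by roots, so $h_\pm=(|h|\pm h)/2\in J$.
\item \textbf{Idempotence.} For $y\in J_+$, $y=y^{1/2}y^{1/2}\in J^2$, and positive spanning then gives $J\subseteq J^2$.
\end{itemize}
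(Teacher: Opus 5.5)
There is a genuine gap at (iv), and everything else in your plan depends on it. The paper gives no argument here; it quotes \cite[Theorem~5.2 and Corollary~5.5]{GKT}.

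Semiprimeness only lets you pass from $I^2\subseteq J$ to $I\subseteq J$. So to get $y^{1/2}\in J$ from $y\in J$, you must control the products $y^{1/2}by^{1/2}$ in terms of $y$. Rewriting them as $y^{1/4}(y^{1/4}by^{1/4})y^{1/4}$ and iterating over $y^{1/2^k}$ goes the wrong way, since you know nothing about higher roots.

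The inclusion $(Ay^{1/2}A)^2\subseteq AyA$ that you use in the prime case is false in general. Take $A=M_2(C([-1,1]))$ and $y=\operatorname{diag}(f_1,f_2)$ with $f_1(t)=|t|$, $f_2(t)=|t|(1+\sin(1/t))$ and $f_2(0)=0$. Then $y^{1/2}e_{12}y^{1/2}=|t|\sqrt{1+\sin(1/t)}\,e_{12}$ lies in $(Ay^{1/2}A)^2$. However, the $(1,2)$ entry of any element of $\operatorname{span}AyA$ has the form $|t|\bigl(h_1+h_2(1+\sin(1/t))\bigr)$ with $h_1,h_2$ continuous. Letting $t\to0$ along points where $\sin(1/t)=s$ would force $\sqrt{1+s}=h_1(0)+h_2(0)(1+s)$ for all $s\in[-1,1]$, which is impossible. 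So reducing to prime ideals does not help either.

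What is missing is a lemma that pays for the exponent loss. For instance: if $y\in A_+$, $b\in A$, $\alpha,\beta>0$ and $0<\gamma<\alpha+\beta$, then $y^\alpha b y^\beta\in y^\gamma A+Ay^\gamma$. To prove it, take a dyadic partition $\sum_k\psi_k^2=1$ on $(0,\lVert y\rVert]$ with $\psi_k$ supported near $2^{-k}$. Expand $y^\alpha by^\beta=\sum_{k,l}\psi_k(y)^2y^\alpha b\,y^\beta\psi_l(y)^2$, then factor $y^\gamma$ out on the left when $k\le l$ and on the right when $k>l$. The resulting coefficient series converge in norm precisely because $\gamma<\alpha+\beta$.

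With this lemma, (iv) follows in two steps:
\begin{itemize}
\item The ideal generated by $y^{3/4}$ has square inside $\operatorname{span}A^\sim yA^\sim\subseteq J$, so $y^{3/4}\in J$.
\item The ideal generated by $y^{1/2}$ has square inside $\operatorname{span}A^\sim y^{3/4}A^\sim\subseteq J$, so $y^{1/2}\in J$.
\end{itemize}

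Your self-adjointness step is also invalid. The ideal generated by $\{x^*:x\in J\}$ is $J^*$ itself. Now $J^*J^*$ is spanned by $x^*y^*=(yx)^*$ with $x,y\in J$, not by elements of $J^*J$, and nothing you have shown puts $(yx)^*$ in $J$. For semiprime $J$ the inclusion $J^*J^*\subseteq J$ is equivalent to $J^*\subseteq J$, so the argument is circular.

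Derive self-adjointness from (iv) instead. For $x\in J$ you have $x^*x\in J$, hence $(x^*x)^{1/4}\in J$ by applying (iv) twice. The factorization $x=u|x|^{1/2}$ with $u\in A$ then gives $x^*=(x^*x)^{1/4}u^*\in J$.

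Once (iv) and self-adjointness are in hand, the rest of your argument is correct as written. This covers heredity (via $x^{1/2}=uy^{1/4}$), strong invariance (via $x=u|x|^{1/2}$), positive spanning (via $|h|=(h^2)^{1/2}$), and idempotence.
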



\begin{remark} \label{rem:semiprime-Cu-gap}
Let $M$ be a maximal ideal and let $a,b\in A_+$ with $a\precsim b\in M$.
For every $\varepsilon>0$, there exist $\delta>0$ and $x\in A$ such that
$$
 (a-\varepsilon)_+=x^*(b-\delta)_+x.
$$
Proposition~\ref{prop:semiprime-structure} gives
$(b-\delta)_+\in M$, and hence $(a-\varepsilon)_+\in M$.  One cannot pass to
the norm limit and conclude $a\in M$. We provide a concrete example here and this issue will be addressed in
Theorem~\ref{thm:square-factorization}. Let $A=\K(\ell^2)$ and $J=\bigcup_{m=1}^\infty S_m$, in which $S_m$ denotes the Schatten $m$-class. Note that $J_+$ is closed under the square-root and thus $J$ is semiprime by \cite[Theorem A]{GKT}. Now, let $a=\operatorname{diag}(2^{-n})$ and $b=\operatorname{diag}(\frac{1}{\log(n+1)})$. Then it can be verified that $a\notin J, b\in J$ and $[a]=[b]$ in $\operatorname{Cu}(A)$.
\end{remark}

\section{Generalized singular functions}
\label{sec:singular}
In this section, we first introduce some generalized comparison results, and then we construct the generalized singular functions for $C^*$-algebras and establish similar properties.

\begin{lemma}\label{lem:polar-cutdown}
For $x\in A$ and $\alpha\geq0$, there is $x_\alpha\in A$ such that
$$
 |x_\alpha|=(|x|-\alpha)_+,
 \qquad \lVert x-x_\alpha\rVert\leq\alpha.
$$
\end{lemma}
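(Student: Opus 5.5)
Write $x = v|x|$ for the polar decomposition in $A^{**}$ and define $x_\alpha := v(|x|-\alpha)_+$. The task is to show three things: that $x_\alpha$ lies in $A$ and not just in $A^{**}$, that $|x_\alpha| = (|x|-\alpha)_+$, and the norm estimate.

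To avoid working in the bidual, I would define the element directly as
$$x_\alpha := x\,g(|x|), \qquad g(t)=\frac{(t-\alpha)_+}{t}\ \text{for } t>0,\quad g(0)=0.$$
The function $g$ is continuous on $[0,\infty)$: it vanishes on $[0,\alpha]$ when $\alpha>0$, and it is identically $1$ on $(0,\infty)$ when $\alpha=0$. That last case is trivial anyway, since we may take $x_0=x$. For $\alpha>0$ we have $g(0)=0$, so $g(|x|)\in C^*(|x|)\subseteq A$, even when $A$ is nonunital. Hence $x_\alpha\in A$.

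For the modulus, compute
$$x_\alpha^*x_\alpha = g(|x|)\,x^*x\,g(|x|) = \bigl(t\,g(t)\bigr)^2\big|_{t=|x|} = (|x|-\alpha)_+^2,$$
using functional calculus in the commutative algebra $C^*(|x|)$. Taking positive square roots gives $|x_\alpha| = (|x|-\alpha)_+$.

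For the norm estimate,
$$x - x_\alpha = x\bigl(1-g(|x|)\bigr) = x\,h(|x|), \qquad h = 1-g,$$
where this is computed in $A^\sim$. Then
$$(x-x_\alpha)^*(x-x_\alpha) = h(|x|)\,|x|^2\,h(|x|) = \bigl(t\,h(t)\bigr)^2\big|_{t=|x|}.$$
Now $t\,h(t) = t-(t-\alpha)_+ = \min(t,\alpha)$, which is bounded by $\alpha$. Therefore $\|x-x_\alpha\|^2 \le \alpha^2$, that is, $\|x - x_\alpha\| \le \alpha$.

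The only delicate point is the membership $x_\alpha\in A$ in the nonunital setting, and the argument above handles it by ensuring $g(0)=0$. All other steps are routine functional calculus.
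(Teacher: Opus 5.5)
Your proof is correct and follows the paper's approach. The paper defines $x_\alpha = x\,g_\alpha(|x|)$ with exactly your function $g$ (noting that this equals $v(|x|-\alpha)_+$ for the polar part $v\in A^{**}$) and leaves the rest to functional calculus. You carry out those checks explicitly: membership in $A$ via $g(0)=0$, the identity $x_\alpha^*x_\alpha=(|x|-\alpha)_+^2$, and the bound $t-(t-\alpha)_+=\min(t,\alpha)\le\alpha$.
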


\begin{proof}
For $\alpha=0$, take $x_0=x$. For $\alpha>0$, let $x=v|x|$ be the polar decomposition in $A^{**}$ and put
$$
 g_\alpha(t)=
 \begin{cases}(t-\alpha)_+/t,&t>0,\\0,&t=0.\end{cases}
 \qquad x_\alpha=xg_\alpha(|x|).
$$
Then $x_\alpha=v(|x|-\alpha)_+\in A$, and the assertions follow from
functional calculus.
\end{proof}

\begin{lemma}\label{lem:cuntz-cutdown}
Let $x,y\in A$, let $u,v\in A^\sim$, and let
$\alpha,\beta,\delta>0$.  Then we have

{\rm (i)}  $|uxv|\precsim|x|$;

{\rm (ii)} $\|a-b\|<\eta$ $\Rightarrow$ $(|a|-\eta)_+\precsim |b|$;

{\rm (iii)} $(|x+y|-\alpha-\beta-\delta)_+
 \precsim (|x|-\alpha)_+\oplus(|y|-\beta)_+;$

{\rm (iv)} $(|uxv|-\lVert u\rVert\lVert v\rVert\alpha-\delta)_+
 \precsim (|x|-\alpha)_+.$
\end{lemma}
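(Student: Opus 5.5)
We treat the four items in order, using the standard fact that $|z|\sim_{\rm Cu}|z^*|$, i.e. $z^*z\sim_{\rm Cu}zz^*$. We also use the fact that Cuntz subequivalence between positive elements passes to continuous functions vanishing at $0$. Note that (ii) involves $a,b$; we read these as arbitrary elements of $A$.

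\emph{Item (i).} We compute $|uxv|^2=v^*x^*u^*uxv\le\|u\|^2v^*|x|^2v$. By Lemma~\ref{R lem}(iii) this gives $|uxv|^2\precsim v^*|x|^2v=(|x|v)^*(|x|v)$. The last element is Cuntz equivalent to $|x|v v^*|x|\le\|v\|^2|x|^2$, so it is Cuntz below $|x|^2\sim|x|$. Since $c\sim c^2$ for positive $c$, item (i) follows.

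\emph{Item (ii).} We reduce to Lemma~\ref{R lem}(ii) by passing to the self-adjoint dilation in $M_2(A)$. Let
$$\tilde a=\begin{pmatrix}0&a\\ a^*&0\end{pmatrix},$$
which has $|\tilde a|=|a^*|\oplus|a|$, and define $\tilde b$ similarly. Then $\|\tilde a-\tilde b\|=\|a-b\|<\eta$. We cannot apply Lemma~\ref{R lem}(ii) directly, because $\tilde a,\tilde b$ are not positive. Instead we use the standard refinement: if $h,k$ are self-adjoint and $\|h-k\|<\eta$, then $(h-\eta)_+\precsim k_+$ and $(-h-\eta)_+\precsim k_-$ (R\o rdam's argument works for self-adjoint elements). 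Apply this with $h=\tilde a$, $k=\tilde b$. The spectral data of $\tilde a$ is symmetric, and $(\tilde a-\eta)_+\sim(|a|-\eta)_+$. Combining these gives $(|a|-\eta)_+\precsim\tilde b_+\sim|b|$.

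A cleaner route is also available. Write $(|a|-\eta)_+$ via $a_\eta$ from Lemma~\ref{lem:polar-cutdown}. Then use the known inequality $(a^*a-\varepsilon)_+\precsim b^*b$ whenever $\|a^*a-b^*b\|<\varepsilon$, and control $\|a^*a-b^*b\|$ at the level of $|\cdot|$. I will try the dilation route first.

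\emph{Item (iii).} Set $\gamma=\alpha+\beta$. By Lemma~\ref{lem:polar-cutdown}, $\|x-x_\alpha\|\le\alpha$ and $\|y-y_\beta\|\le\beta$, so
$$\|(x+y)-(x_\alpha+y_\beta)\|\le\gamma<\gamma+\delta.$$
Item (ii) with $\eta=\gamma+\delta$ then gives $(|x+y|-\gamma-\delta)_+\precsim|x_\alpha+y_\beta|$.

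It remains to show $|x_\alpha+y_\beta|\precsim|x_\alpha|\oplus|y_\beta|$. Write
$$x_\alpha+y_\beta=\begin{pmatrix}1&1\end{pmatrix}\begin{pmatrix}x_\alpha&0\\0&y_\beta\end{pmatrix}\begin{pmatrix}1\\1\end{pmatrix}$$
in matrices over $A^\sim$. Item (i), applied in $M_2(A)$ and padded, gives the comparison. Finally, $|x_\alpha|=(|x|-\alpha)_+$ and $|y_\beta|=(|y|-\beta)_+$.

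\emph{Item (iv).} We have $\|uxv-ux_\alpha v\|\le\|u\|\|v\|\alpha$. Item (ii) with $\eta=\|u\|\|v\|\alpha+\delta$ gives $(|uxv|-\eta)_+\precsim|ux_\alpha v|$. Item (i) then gives $|ux_\alpha v|\precsim|x_\alpha|=(|x|-\alpha)_+$.

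\emph{Main obstacle.} The only delicate point is item (ii) for nonnormal elements: we must get $|a|$ rather than $|a^*|$ and keep a strict $\eta$. The dilation trick, or working with $a^*a$ and square roots, should handle this.
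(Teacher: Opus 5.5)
Your items (i), (iii) and (iv) follow the paper's proof essentially verbatim: the same chain $|uxv|^2\le\|u\|^2v^*|x|^2v\sim|x|vv^*|x|\le\|v\|^2|x|^2$, the same cut-downs $x_\alpha,y_\beta$ from Lemma~\ref{lem:polar-cutdown}, and the same $2\times2$ factorization of $x_\alpha+y_\beta$. The real difference is in (ii).

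The paper works directly in $A^{**}\subseteq B(\mathcal H)$. Put $z=(|a|-\eta)_+$, $p=\mathds{1}_{(\eta,\infty)}(|a|)$ and $\varepsilon=\|a-b\|<\eta$. For $\xi\in p\mathcal H$ one has $\|b\xi\|\ge\||a|\xi\|-\varepsilon\|\xi\|\ge(\eta-\varepsilon)\|\xi\|$. Hence $z^{1/2}b^*bz^{1/2}\ge(\eta-\varepsilon)^2z$, and so $z\precsim bzb^*\le\|z\|\,bb^*\sim|b|$. This argument stays inside $A$ and needs neither matrix amplification nor a perturbation lemma. Testing $b$ on the spectral subspace of $|a|$ is exactly what produces $|a|$ rather than $|a^*|$.

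Your route instead passes to self-adjoint elements and reuses the standard perturbation lemma, which is legitimate. The self-adjoint version you quote does hold. From $h\le k_++\delta$ with $\delta=\|h-k\|<\eta$ you get $(h-\eta)_+=f(h)(h-\delta)f(h)\le f(h)k_+f(h)\precsim k_+$, where $f(t)=\bigl((t-\eta)/(t-\delta)\bigr)^{1/2}$ on $(\eta,\infty)$ and $f=0$ elsewhere.

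The step you only assert, $(\tilde a-\eta)_+\sim(|a|-\eta)_+$, is true but does not follow from symmetry of $\operatorname{sp}(\tilde a)$. Symmetry only yields $2[(\tilde a-\eta)_+]=[(|a|-\eta)_+]+[(|a^*|-\eta)_+]=2[(|a|-\eta)_+]$, and $\Cu(A)$ need not have cancellation. You need an explicit intertwiner. Write $a=v|a|$ in $A^{**}$, let $g(t)=(t-\eta)_+^{1/2}/t$, and put
\[
 U=\tfrac{1}{\sqrt2}\begin{pmatrix}v\\ v^*v\end{pmatrix},\qquad
 Y=\tfrac{1}{\sqrt2}\begin{pmatrix}0&a\,g(|a|)\\ 0&(|a|-\eta)_+^{1/2}\end{pmatrix}\in M_2(A).
\]
Then $\tilde a_+=U|a|U^*$ with $U^*U=v^*v$, so $(\tilde a-\eta)_+=U(|a|-\eta)_+U^*=YY^*$, while $Y^*Y=0\oplus(|a|-\eta)_+$. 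Running the same computation for $b$ at every level $\epsilon>0$ gives $(\tilde b_+-\epsilon)_+\precsim|b|$. Hence $\tilde b_+\precsim|b|$ by Lemma~\ref{R lem}(i).

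Two smaller points:
\begin{enumerate}
\item Your fallback via $\|a^*a-b^*b\|$ would not give (ii). That norm is only bounded by $(\|a\|+\|b\|)\|a-b\|$, so you would cut $|a|^2$ at a level of order $\|a\|\eta$ instead of $\eta^2$. This loses the sharp threshold that (iii) and (iv) rely on.
\item Your opening claim that Cuntz subequivalence ``passes to continuous functions vanishing at $0$'' is false as stated: $1\sim\tfrac12$ in $\mathbb C$, yet $(1-\tfrac34)_+\not\precsim(\tfrac12-\tfrac34)_+=0$. You only ever use $c\sim c^2$, however, and that is fine.
\end{enumerate}
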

\label{eq:cutdown-product}\label{eq:cutdown-sum}
\begin{proof}
(i) We shall also use the basic facts $c\sim c^2$ and
 $z^*z\sim zz^*$ ($c\in A_+$, $z\in A$).

Since we have
$$
|uxv|^2=v^*x^*u^*uxv
 \leq\lVert u\rVert^2v^*x^*xv\sim v^*|x|^2v,
$$
and
$$
 v^*|x|^2v
 =(|x|v)^*(|x|v)
 \sim (|x|v)(|x|v)^*=|x|vv^*|x|
 \leq\lVert v\rVert^2|x|^2 \precsim |x|^2,
$$
Now we conclude that
$$
 |uxv|
 \sim |uxv|^2
 \precsim |x|^2
 \sim |x|.
$$

(ii) 
Let us view $A$ as a $C^*$-subalgebra of its enveloping von Neumann algebra $A^{**}$, and consider a faithful representation of $A^{**}$ as a subset of $B(\mathcal{H})$.

Denote
$$
\varepsilon = \|a-b\|,\quad z=(|a|-\eta)_{+},\quad p = \mathds{1}_{(\eta,\infty)}(|a|).$$
For every $\xi\in p\mathcal{H}$,
$$\|a\xi\| = \||a|\xi\| \ge \eta\|\xi\|.$$
Consequently,
$$\|b\xi\| \ge \|a\xi\| - \|(a-b)\xi\| \ge (\eta-\varepsilon)\|\xi\|.$$
Thus,
$$pb^{*}bp \ge (\eta-\varepsilon)^{2}p.$$
Since
$
z^{1/2}=pz^{1/2},$
 then multiplying $z^{1/2}$ on both sides gives
$$z^{1/2}b^{*}bz^{1/2} \ge (\eta-\varepsilon)^{2}z.$$
Therefore,
$$
z\precsim z^{1/2}b^{*}bz^{1/2}\sim bzb^*\leq \|z\|bb^*\sim b^*b=|b|^2\sim |b|.
$$
Finally,
$$(|a|-\eta)_{+} \precsim |b|.$$

(iii)  Let
$x_\alpha$ and $y_\beta$ be the elements supplied by
Lemma~\ref{lem:polar-cutdown}.  Thus
$$
 |x_\alpha|=(|x|-\alpha)_+,
 \qquad
 |y_\beta|=(|y|-\beta)_+,
$$
and
$$
 \lVert x-x_\alpha\rVert\leq\alpha,
 \qquad
 \lVert y-y_\beta\rVert\leq\beta.
$$
It follows that
$$
 \lVert(x+y)-(x_\alpha+y_\beta)\rVert
 \leq\alpha+\beta.
$$
Applying (ii) gives
$$
 (|x+y|-\alpha-\beta-\delta)_+
 \precsim |x_\alpha+y_\beta|.
$$
To compare the other part, applying (i) to
$$
\begin{pmatrix}
  1&1\\
  0&0
 \end{pmatrix}
 \begin{pmatrix}
  x_\alpha&0\\
  0&y_\beta
 \end{pmatrix}
 \begin{pmatrix}
  1&0\\
  1&0
 \end{pmatrix}=
  \begin{pmatrix}
  x_\alpha+ y_\beta &0\\
  0& 0
 \end{pmatrix}
$$
yields that
$$
 |x_\alpha+y_\beta|\oplus0
 \precsim
 |x_\alpha|\oplus|y_\beta|.
$$
Now we have
$$
 (|x+y|-\alpha-\beta-\delta)_+
 \precsim
 (|x|-\alpha)_+\oplus(|y|-\beta)_+.
$$

(iv) For the product estimate, if
$
0=\lVert u\rVert\lVert v\rVert.
$
 the inequality is trivial. Otherwise,
$$
 \lVert uxv-ux_\alpha v\rVert
 \leq \lVert u\rVert\lVert v\rVert\lVert x-x_\alpha\rVert
 \leq \lVert u\rVert\lVert v\rVert\alpha.
$$
Another application of (ii) gives
$$
 (|uxv|-\lVert u\rVert\lVert v\rVert\alpha-\delta)_+
 \precsim |ux_\alpha v|.
$$
Using (i) and the definition of
$x_\alpha$, we obtain
$$
 |ux_\alpha v|
 \precsim |x_\alpha|
 =(|x|-\alpha)_+.
$$
Therefore,
$$
 (|uxv|-\lVert u\rVert\lVert v\rVert\alpha-\delta)_+
 \precsim (|x|-\alpha)_+.
$$
\end{proof}

\begin{definition}
\label{eq:nu-definition}
Let $\tau$ be a lower semicontinuous $2$-quasitrace on $A$. For any fixed $x\in A$, define the $\tau$-distribution function of $x$ by
$$
 \Delta_{x,\tau}(t)=d_\tau((|x|-t)_+),
 \qquad t\geq 0.
$$
For any parameter $s>0$, define
$$
 \nu_s^\tau(x)=\inf\{t\geq0:\Delta_{x,\tau}(t)\leq s\}.
$$
The value $\nu_s^\tau(x)$ is finite because
$\Delta_{x,\tau}(\lVert x\rVert)=0$.
\end{definition}


\begin{proposition} \label{prop:nu-inequalities}
Let $\tau$ be a lower semicontinuous $2$-quasitrace on $A$.  For
$x,y\in A$, $r,s>0$,
$u,v\in A^\sim$, $\lambda\in\mathbb C$, and $a\in A_+$, one has

{\rm (i)} $\nu_{r+s}^\tau(x+y)\leq\nu_r^\tau(x)+\nu_s^\tau(y);$

 {\rm (ii)} $\nu_s^\tau(uxv)\leq\lVert u\rVert\lVert v\rVert\nu_s^\tau(x);$

{\rm (iii)} $\nu_s^\tau(\lambda x)=|\lambda|\nu_s^\tau(x);$

{\rm (iv)} $\nu_s^\tau(a^{1/2})=\nu_s^\tau(a)^{1/2}.$

Moreover, $s\mapsto\nu_s^\tau(x)$ is decreasing, and
$0\leq a\leq b$ implies $\nu_s^\tau(a)\leq\nu_s^\tau(b)$.
\end{proposition}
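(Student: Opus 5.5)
The plan is to reduce every inequality to a statement about the distribution functions $\Delta_{\cdot,\tau}$ and then read it off through the infimum defining $\nu_s^\tau$. The inputs are Lemma~\ref{lem:cuntz-cutdown}, together with monotonicity and additivity of $d_\tau$ (Definition~\ref{d def}). Throughout I use that $\Delta_{x,\tau}$ is decreasing in $t$, which holds because $(|x|-t')_+\precsim(|x|-t)_+$ for $t\le t'$. I also use that $\Delta_{x,\tau}$ is right continuous in $t$. This follows from property (3) of $d_\tau$, since $((|x|-t)_+-\varepsilon)_+=(|x|-t-\varepsilon)_+$. Consequently the infimum defining $\nu_s^\tau(x)$ is attained:
\[
\Delta_{x,\tau}(\nu_s^\tau(x))\le s .
\]
Monotonicity in $s$ is then immediate, because the set $\{t:\Delta_{x,\tau}(t)\le s\}$ grows with $s$.

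\textbf{(i).} Put $\alpha=\nu_r^\tau(x)$ and $\beta=\nu_s^\tau(y)$. Lemma~\ref{lem:cuntz-cutdown}(iii) gives, for every $\delta>0$,
\[
(|x+y|-\alpha-\beta-\delta)_+\precsim(|x|-\alpha)_+\oplus(|y|-\beta)_+ .
\]
Applying $d_\tau$ and using additivity on orthogonal sums, I get
\[
\Delta_{x+y,\tau}(\alpha+\beta+\delta)\le r+s .
\]
Hence $\nu_{r+s}^\tau(x+y)\le\alpha+\beta+\delta$, and I let $\delta\to0$.

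\textbf{(ii).} This is identical in structure, using Lemma~\ref{lem:cuntz-cutdown}(iv) with $\alpha=\nu_s^\tau(x)$. It yields
\[
\Delta_{uxv,\tau}(\lVert u\rVert\lVert v\rVert\alpha+\delta)\le s .
\]

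\textbf{(iii).} For $\lambda\neq0$ we have $|\lambda x|=|\lambda||x|$, so $(|\lambda x|-t)_+=|\lambda|(|x|-t/|\lambda|)_+$. Scaling does not change the Cuntz class, so
\[
\Delta_{\lambda x,\tau}(t)=\Delta_{x,\tau}(t/|\lambda|),
\]
and the equality follows. The case $\lambda=0$ is trivial, since then $\Delta\equiv0$ and $\nu=0$.

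\textbf{(iv).} Since $|a^{1/2}|=a^{1/2}$, the element $(a^{1/2}-t)_+$ has the same support projection as $(a-t^2)_+$. Indeed, both are functions of $a$ that are strictly positive exactly on $(t^2,\infty)$. Therefore they are Cuntz equivalent (e.g.\ each is dominated by a multiple of a functional-calculus transform of the other, or via $d_\tau$ depending only on the open support). This gives
\[
\Delta_{a^{1/2},\tau}(t)=\Delta_{a,\tau}(t^2),
\]
and hence $\nu_s^\tau(a^{1/2})=\nu_s^\tau(a)^{1/2}$.

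\textbf{Monotonicity in the order.} Suppose $0\le a\le b$. I want to show
\[
(a-t)_+\precsim(b-t)_+ .
\]
This is the one step that is not a formal consequence of the lemmas, and I expect it to be the main obstacle. My first attempt is to use the standard fact (Rørdam / Kirchberg–Rørdam) that $a\le b$ implies $(a-t)_+\precsim(b-t)_+$. If I must prove it, I would argue as follows. Write $a^{1/2}=wb^{1/2}$ with $w$ a contraction in $A^{**}$. Then $a\sim b^{1/2}w^*wb^{1/2}\le b$, which gives a relation between $a$ and a compression of $b$. The cut-down version would then follow from Lemma~\ref{lem:cuntz-cutdown}(ii)-type spectral estimates: on the range of $1_{(t,\infty)}(a)$ one has $\langle b\xi,\xi\rangle\ge\langle a\xi,\xi\rangle>t\lVert\xi\rVert^2$. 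This forces that spectral subspace to be comparable to the spectral subspace of $b$ above $t$, by a Glazman/min-max-type argument inside $A$.

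Once that inequality is available, I obtain $\Delta_{a,\tau}\le\Delta_{b,\tau}$ pointwise. This gives $\nu_s^\tau(a)\le\nu_s^\tau(b)$ directly from the definition.
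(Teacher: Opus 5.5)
Your proposal is correct and follows the paper's proof essentially line by line: (i) and (ii) come from the cutdown estimates of Lemma~\ref{lem:cuntz-cutdown}(iii),(iv), (iii) from scaling, (iv) from $(a^{1/2}-t)_+\sim(a-t^2)_+$, and order-monotonicity from the standard fact $(a-t)_+\precsim(b-t)_+$ for $0\leq a\leq b$, which the paper likewise just invokes. Your use of right continuity to take $\alpha=\nu_r^\tau(x)$ exactly is a harmless variant of the paper's choice $\alpha>\nu_r^\tau(x)$, and the paper's choice also avoids the $\alpha>0$ hypothesis in Lemma~\ref{lem:cuntz-cutdown}.

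Your min-max backup is not needed. If you want the cutdown fact self-contained, note that $a-t\leq b-t\leq(b-t)_+$ in $A^\sim$, and compress by $g(a)$, where $g$ is continuous, $0\leq g\leq 1$, vanishes on $(-\infty,t+\eta/2]$ and equals $1$ on $[t+\eta,\infty)$. This gives $(a-t-\eta)_+\leq g(a)(b-t)_+g(a)\precsim(b-t)_+$ for every $\eta>0$.
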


\begin{proof}
Choose $\alpha>\nu_r^\tau(x)$ and $\beta>\nu_s^\tau(y)$, then
$$
 d_\tau((|x|-\alpha)_+) \leq r,\quad  d_\tau((|y|-\beta)_+) \leq s,
$$
 Applying
$d_\tau$ to Lemma \ref{lem:cuntz-cutdown} (iii) gives
$$
\Delta_{x+y,\tau}(\alpha+\beta+\delta)\leq r+s.
$$
Taking infima proves (i).

For (ii),  it can be similarly deduced from
Lemma~\ref{lem:cuntz-cutdown} (iv); (iii) is trivial.

For $t>0$, since we have
$$
 (a-t^2)_+=(a^{1/2}-t)_+(a^{1/2}+t),
$$
then
$$(a^{1/2}-t)_+\sim (a-t^2)_+.$$ Hence,
$\Delta_{a^{1/2},\tau}(t)=\Delta_{a,\tau}(t^2)$, which proves
(iv).
The remaining statements follow directly from the
definition and $(a-t)_+\precsim(b-t)_+$ for $0\leq a\leq b$.
\end{proof}


\begin{proposition}\label{prop:fack-kosaki}
Let $\tau$ be a faithful, densely finite, lower semicontinuous trace on $A$, and compute generalized singular numbers in
the semifinite von Neumann algebra generated by the GNS representation of
$\tau$. Then
$$
 \nu_s^\tau(x)=\mu_s^\tau(x)
 \qquad(x\in A,\ s>0).
$$
\end{proposition}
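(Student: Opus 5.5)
Write $\pi_\tau$ for the GNS representation of $\tau$, $M=\pi_\tau(A)''$, and $\bar\tau$ for the normal faithful semifinite extension of $\tau$ to $M$. Fack--Kosaki define
$$
\mu_s(x)=\inf\{t\ge0:\bar\tau(\mathds 1_{(t,\infty)}(|x|))\le s\},
$$
with $x$ identified with $\pi_\tau(x)$. Comparing this with Definition~\ref{eq:nu-definition}, the whole statement reduces to one identity between distribution functions:
$$
\Delta_{x,\tau}(t)=d_\tau((|x|-t)_+)=\bar\tau\bigl(\mathds 1_{(t,\infty)}(|x|)\bigr)\qquad(t\ge0).
$$
Once this holds, the two infima range over the same set of $t$, so $\nu_s^\tau(x)=\mu_s(x)$ for every $s>0$.

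To prove the identity, set $c=(|x|-t)_+\in A_+$. Then $\pi_\tau(|x|)=|\pi_\tau(x)|$, and by functional calculus the support projection of $\pi_\tau(c)$ is $\mathds 1_{(t,\infty)}(|\pi_\tau(x)|)$. Also $c^{1/n}\nearrow$ this support projection strongly in $M$. Normality of $\bar\tau$ then gives
$$
\bar\tau\bigl(\mathds 1_{(t,\infty)}(|x|)\bigr)=\lim_n\bar\tau(\pi_\tau(c^{1/n}))=\lim_n\tau(c^{1/n})=d_\tau(c).
$$

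The step that needs care is the middle equality $\bar\tau(\pi_\tau(a))=\tau(a)$ for all $a\in A_+$, including those with $\tau(a)=\infty$.
\begin{itemize}
\item For $a$ in the definition ideal $\mathfrak m_\tau$, this is the standard construction of the extension: $\bar\tau(\pi_\tau(a))=\|\Lambda_\tau(a^{1/2})\|^2=\tau(a)$.
\item For general $a\in A_+$, I would use lower semicontinuity of $\tau$ and dense finiteness of $\tau$ (so $\Ped(A)\subseteq\mathfrak m_\tau$) to approximate. Take an approximate unit $(e_\lambda)$ of $\Ped(A)_+$ and approximate $a$ from below by $a^{1/2}e_\lambda a^{1/2}$, or by $f_\varepsilon(a)\in\Ped(A)$ with $f_\varepsilon(a)\nearrow a$.
\item On the $\tau$ side, $\tau(a)=\sup_\varepsilon\tau(f_\varepsilon(a))$ holds by lower semicontinuity together with monotonicity.
\item On the $\bar\tau$ side, normality gives the same supremum.
\item This is the standard fact (Pedersen, or Combes) that a densely defined lower semicontinuous trace coincides on $A_+$ with the restriction of its normal extension. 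I would cite it rather than reprove it.
\end{itemize}

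Two further points need checking. First, faithfulness is needed so that $\bar\tau$ is faithful on $M$ and the Fack--Kosaki framework applies; strictly, one only needs $\bar\tau$ faithful on $M$, which can be arranged by cutting $M$ by the support of $\bar\tau$. Second, the problem at $t=0$ with $s>0$ is avoided: we never need $t=0$ specially, since the identity holds for every $t\ge0$ by the same argument, using $c=|x|$.
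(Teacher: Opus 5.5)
Your proposal is correct and follows the paper's route. Like the paper, you reduce the statement to the identity $d_\tau((|x|-t)_+)=\bar\tau\bigl(\mathds{1}_{(t,\infty)}(|x|)\bigr)$ and then invoke the Fack--Kosaki distribution formula; your normality argument, using $(a-\varepsilon)_+\in\Ped(A)_+$ and lower semicontinuity, supplies the justification of $\bar\tau\circ\pi_\tau=\tau$ on $A_+$ that the paper leaves implicit. One small fix: $c^{1/n}$ increases to the support projection only when $\lVert c\rVert\le 1$, so apply normality to $c/\lVert c\rVert$, which has the same $d_\tau$.
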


\begin{proof}
For $t\geq0$,
$$
 d_\tau((|x|-t)_+)
   =\tau(\mathds{1}_{(t,\infty)}(|x|)).
$$
The generalized-inverse formula for $\mu_s^\tau$ therefore agrees with Definition
\ref{eq:nu-definition}; see \cite{FackKosaki}.
\end{proof}
\begin{corollary}\label{lem:threshold}
Let $b\in A_+$ and let $\tau$ be a lower semicontinuous $2$-quasitrace.  If
$L,\gamma>0$ and
$$
 \nu_s^\tau(b)\geq\gamma\qquad(0<s<L),
$$
then, for every $0\leq\eta<\gamma$,
$d_\tau((b-\eta)_+)\geq L$.
\end{corollary}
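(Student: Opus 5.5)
The plan is to argue by contraposition, using only the definition of $\nu_s^\tau$ and the monotonicity of $\Delta_{b,\tau}$. Fix $0\le\eta<\gamma$ and suppose $d_\tau((b-\eta)_+)<L$. Choose $s$ with $d_\tau((b-\eta)_+)<s<L$; if $d_\tau((b-\eta)_+)=0$, any $s\in(0,L)$ works. Then $\Delta_{b,\tau}(\eta)\le s$, so $\eta$ belongs to the set $\{t\ge0:\Delta_{b,\tau}(t)\le s\}$. Note that $|b|=b$ because $b\ge 0$. By the definition of $\nu_s^\tau$ we get $\nu_s^\tau(b)\le\eta<\gamma$ with $0<s<L$, which contradicts the hypothesis. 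Hence $d_\tau((b-\eta)_+)\ge L$.

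There is essentially no obstacle here. The only point to check is the choice of $s$: we need it strictly positive and strictly less than $L$ while still dominating $\Delta_{b,\tau}(\eta)$, and the strict inequality $\Delta_{b,\tau}(\eta)<L$ leaves room for such an $s$. We do not need monotonicity of $t\mapsto\Delta_{b,\tau}(t)$, since $\eta$ itself lies in the set over which the infimum is taken. Likewise, no lower semicontinuity of $d_\tau$ is used, so $\eta=0$ needs no separate treatment: there $(b-0)_+=b$ and the same argument applies.
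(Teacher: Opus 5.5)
Your argument is correct. If $d_\tau((b-\eta)_+)<L$, choosing $s\in(0,L)$ with $\Delta_{b,\tau}(\eta)\le s$ puts $\eta$ in the set defining $\nu_s^\tau(b)$, which forces $\nu_s^\tau(b)\le\eta<\gamma$ and contradicts the hypothesis.

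The paper gives no written proof of this corollary; it treats the statement as an immediate consequence of the definition of $\nu_s^\tau$, and your contrapositive unpacking of that definition is exactly that intended argument.
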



\begin{proposition}\label{prop:weighted-diagonal}
Let $(q_n)$ be pairwise orthogonal nonzero projections in $A$,
and put $p_n=\sum_{j=1}^nq_j$, $p_0=0$.
Let $(\gamma_n)$ be a decreasing sequence of positive real numbers such that $\lim\limits_{n \to \infty} \gamma_n = 0$. 
Assume $0<\tau(p_n)<\infty$. Set
$
 c=\sum_{n=1}^\infty\gamma_nq_n,
$
then
$$
 \nu_s^\tau(c)=\gamma_k, \quad\text{when}\quad \sum_{n=1}^{k-1}\tau(q_n)\leq s<\sum_{i=1}^{k}\tau(q_n).
$$
For $N\geq1 $,
$$
 \nu_s^\tau(\sum_{n=N+1}^\infty\gamma_nq_n)
 =\nu_{s+\tau(p_N)}^\tau(c).
$$
\end{proposition}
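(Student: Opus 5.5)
The plan is to compute the distribution function $\Delta_{c,\tau}$ exactly and then read off the generalized inverse. First I will confirm that $c$ is a well-defined positive element. Since the $q_n$ are pairwise orthogonal projections and $\gamma_n\downarrow 0$, the partial sums $\sum_{n\le N}\gamma_nq_n$ are Cauchy in norm, because $\|\sum_{n=M}^N\gamma_nq_n\|=\gamma_M\to0$. Hence $c\in A_+$, and $c$ lies in the commutative $C^*$-algebra generated by the $q_n$.

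For $t\ge0$, functional calculus gives
$$(c-t)_+=\sum_{n:\gamma_n>t}(\gamma_n-t)q_n .$$
Because $\gamma_n$ is decreasing and tends to $0$, when $t>0$ the index set is a finite initial segment $\{1,\dots,m\}$, where $m=m(t)=\#\{n:\gamma_n>t\}$. This sum is then a finite positive combination of orthogonal projections, so it is Cuntz equivalent to $p_m$; its support projection is $p_m$, and $(\gamma_n-t)_n$ is bounded below on that finite set. Definition~\ref{d def}(2) together with $d_\tau(q)=\tau(q)$ for projections then gives
$$\Delta_{c,\tau}(t)=\tau(p_{m(t)})=\sum_{n=1}^{m(t)}\tau(q_n),\qquad t>0.$$
I only use $\tau(p_n)<\infty$ here, to ensure these values are finite. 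The case $t=0$ does not affect the infimum once $s>0$ is fixed, since $\Delta_{c,\tau}$ is decreasing.

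Next I invert. Fix $s$ with $\tau(p_{k-1})\le s<\tau(p_k)$. The condition $\Delta_{c,\tau}(t)\le s$ means $\tau(p_{m(t)})\le s$. The sequence $\tau(p_m)$ is increasing in $m$, strictly so because the $\tau(q_n)>0$ (I read positivity of $\tau$ on the nonzero $q_n$ as implied by $0<\tau(p_n)$ for all $n$, which includes $n=1$). The condition is therefore equivalent to $m(t)\le k-1$, that is, $\gamma_k\le t$. Hence $\nu_s^\tau(c)=\inf\{t:\,t\ge\gamma_k\}=\gamma_k$.

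The main subtlety is ties among the $\gamma_n$, for example $\gamma_{k-1}=\gamma_k$. Then $m(t)$ jumps over several indices at once, but the computation above still gives $m(t)\le k-1\iff t\ge\gamma_k$, so the formula survives. I will also note that for $s$ beyond $\sup_k\tau(p_k)$, if that supremum is finite, the value is $0$. The stated formula only covers $s$ in the listed intervals.

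For the tail $c_N=\sum_{n>N}\gamma_nq_n$, the same computation with indices shifted gives
$$\Delta_{c_N,\tau}(t)=\tau(p_{m(t)})-\tau(p_N)\quad\text{when } m(t)>N,$$
and $\Delta_{c_N,\tau}(t)=0$ otherwise. In all cases $\Delta_{c_N,\tau}(t)=\max(\Delta_{c,\tau}(t)-\tau(p_N),0)$. It follows that $\Delta_{c_N,\tau}(t)\le s$ if and only if $\Delta_{c,\tau}(t)\le s+\tau(p_N)$, using $s>0$. Taking infima over $t$ yields
$$\nu_s^\tau(c_N)=\nu_{s+\tau(p_N)}^\tau(c).$$
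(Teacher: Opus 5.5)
Your proof is correct and follows the same route as the paper. You compute $\Delta_{c,\tau}(t)=\sum_{\gamma_n>t}\tau(q_n)$ from $(c-t)_+=\sum_{\gamma_n>t}(\gamma_n-t)q_n$ and invert it, and you handle the tail via $\Delta_{c_N,\tau}=\max(\Delta_{c,\tau}-\tau(p_N),0)$, which spells out the paper's ``simple translation.''

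One small slip: $0<\tau(p_n)$ for all $n$ does not imply $\tau(q_n)>0$ for $n\geq 2$. This is harmless, because you never need strict monotonicity. Given $\tau(p_{k-1})\le s<\tau(p_k)$, monotonicity of $m\mapsto\tau(p_m)$ alone already gives $\tau(p_m)\le s\iff m\le k-1$.
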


\begin{proof}
For $t>0$,
$$
 (c-t)_+=\sum_{\gamma_n>t}(\gamma_n-t)q_n,
 \qquad
 \Delta_{c,\tau}(t)=\sum_{\gamma_n>t}d_\tau(q_n)=\sum_{\gamma_n>t}\tau(q_n).
$$
If $\sum_{n=1}^{k-1}\tau(q_n)\leq s<\sum_{i=1}^{k}\tau(q_n)$, we have
$$
 \nu_s^\tau(c)=\inf\{t\geq0:\sum_{\gamma_n>t}\tau(q_n)\leq s\}=\gamma_k.
$$
The second equality is a simple translation.
\end{proof}

\section{Admissible quasitracial projection scale}
\label{sec:factorization}

In this section, we assume that $A$ is a nonunital $C^*$-algebra but has  an increasing approximate unit consisting of projections $\{p_n\}$, with $p_0=0$.  We write $q_n=p_n-p_{n-1}$ and require $q_n\neq0$ for $n\geq1$.

\begin{definition}
\label{def:projection-scale}
Let $\mathcal T$ be a nonempty family of faithful, densely finite,
      lower semicontinuous $2$-quasitraces on $A$.  Every
      $\tau\in\mathcal T$ is normalized by $d_\tau(p_1)=1$.
      Define the sets
 $$\mathcal T_{\mathrm{fin}}=\{\tau\in\mathcal T:\sup_{n\geq1}\tau(p_n)<+\infty\},
 \qquad
 \mathcal T_{\mathrm{inf}}=\{\tau\in\mathcal T:\sup_{n\geq1}\tau(p_n)=+\infty\}.
$$
These two subsets are disjoint and their union is $\mathcal T$. 
\end{definition}

\begin{definition}
\label{def:admissible-scale}
Let $\rho\geq0$, $\varepsilon_0>0$, and $e\in\Ped(A)_+$, we say the tuple
$$
\left( \mathcal T, \{p_n\} , \rho, \mathcal T_{\mathrm{inf}}, (\mathcal T_{\mathrm{fin}},e,\varepsilon_0) \right)
$$
is an \emph{admissible quasitracial projection scale} 
if it satisfies the following properties:
\begin{enumerate}[label=\textup{(\arabic*)},leftmargin=3.1em]
\item\label{key 1}  for every $n\geq1$ and every $c\in A_+$,
       $$
       d_\tau(q_n)+\rho<d_\tau(c)
       \quad\text{for all }\tau\in\mathcal T
       \quad\Longrightarrow\quad q_n\precsim c;
      $$
\item\label{key 2} if $\mathcal T_{\mathrm{inf}}\neq\varnothing$, then
      $$
      \sup_{\tau\in\mathcal T_{\mathrm{inf}}}\tau(p_n)<+\infty
 \quad
      {\rm and}
 \quad
       \lim_n\inf_{\tau\in\mathcal T_{\mathrm{inf}}}\tau(p_n)
       =+\infty;
      $$
\item\label{key 3} if $\mathcal T_{\mathrm{fin}}\neq\varnothing$, then
      $$
       \inf_{\tau\in\mathcal T_{\mathrm{fin}}}
       d_\tau((e-\varepsilon_0)_+)>\rho
      $$
      and
      $$
       \sup_{\tau\in\mathcal T_{\mathrm{fin}}}
       \bigl(\sup_{j\geq 1}\tau(p_j)-\tau(p_n)\bigr)
       \longrightarrow0.
      $$  
\end{enumerate}
If $\mathcal T_{\mathrm{fin}}$ or $\mathcal T_{\mathrm{inf}}$ is empty, then we assume that the corresponding components naturally satisfy the condition. Particularly, if $\mathcal T_{\mathrm{fin}}=\varnothing$, the tuple reduces to
$
\left( \mathcal T, \{p_n\} , \rho, \mathcal T_{\mathrm{inf}},\varnothing\right).
$
\end{definition}

In a simple $C^*$-algebra, every nonzero lower semicontinuous quasitrace is faithful.  In the non-simple setting, faithfulness is a genuine hypothesis.

\begin{lemma}\label{lem:choose-Theta}
Suppose that $\mathcal T_{\mathrm{inf}}\neq \varnothing$ and  condition~(2) of Definition~\ref{def:admissible-scale} holds, then there is a strict increasing map
$\Theta:\mathbb N\to\mathbb N$, with $\Theta(n)>n$, such that
$$
\tau(p_{\Theta(n)})\geq 2\tau(p_n)+\rho,\quad \tau\in\mathcal T_{\mathrm{inf}}.
$$
\end{lemma}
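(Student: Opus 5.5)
We will build $\Theta$ recursively, using only the two parts of condition~(2) of Definition~\ref{def:admissible-scale}. Fix $n\geq1$ and set
$$
 M_n:=\sup_{\tau\in\mathcal T_{\mathrm{inf}}}\tau(p_n),
$$
which is finite by the first half of condition~(2). By the second half, $\inf_{\tau\in\mathcal T_{\mathrm{inf}}}\tau(p_m)\to+\infty$ as $m\to\infty$. Hence there is an index $m$ with
$$
 \inf_{\tau\in\mathcal T_{\mathrm{inf}}}\tau(p_m)\geq 2M_n+\rho .
$$
For every $\tau\in\mathcal T_{\mathrm{inf}}$ this index gives
$$
 \tau(p_m)\geq 2M_n+\rho\geq 2\tau(p_n)+\rho .
$$
The uniformity over $\tau$ comes precisely from comparing a supremum at level $n$ with an infimum at level $m$. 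This is the only point where care is needed, and it is exactly what condition~(2) is designed for.

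Monotonicity in $m$ is automatic. Since $(p_m)$ is increasing, $p_{m'}-p_m\geq0$ is a projection orthogonal to $p_m$ whenever $m'\geq m$. On commuting projections, property~(2) of $d_\tau$ (Definition~\ref{d def}) gives additivity, and $d_\tau$ agrees with $\tau$ on projections. Therefore $\tau(p_{m'})\geq\tau(p_m)$, so every $m'\geq m$ also satisfies the required inequality.

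We now define $\Theta$ recursively. Put
$$
 \Theta(1)=\max\{2,m_1\},
 \qquad
 \Theta(n)=\max\{\Theta(n-1)+1,\;n+1,\;m_n\}\quad(n\geq2),
$$
where $m_n$ is the index found above for level $n$. By construction $\Theta(n)>n$ and $\Theta$ is strictly increasing. By the monotonicity just established, $\tau(p_{\Theta(n)})\geq 2\tau(p_n)+\rho$ holds for all $\tau\in\mathcal T_{\mathrm{inf}}$. No step here is a genuine obstacle.
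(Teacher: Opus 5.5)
Your proof is correct and follows essentially the same approach as the paper: pick, for each $n$, an index whose infimum over $\mathcal T_{\mathrm{inf}}$ beats $2\sup_{\tau\in\mathcal T_{\mathrm{inf}}}\tau(p_n)+\rho$, then enlarge it to get strict monotonicity. The paper uses the explicit formula $\Theta(n)=N_1+\cdots+N_n+n$ instead of your recursive maximum, and relies implicitly on the monotonicity of $m\mapsto\tau(p_m)$, which you verify explicitly.
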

\begin{proof}
For every $n$, by condition (2), there exists $N_n\geq n$ such that
$$
\inf_{\tau\in\mathcal T_{\mathrm{inf}}}\tau(p_{N_n})>2\sup_{\tau\in\mathcal T_{\mathrm{inf}}}\tau(p_n)+\rho.
$$
Now we define
$$
\Theta(n)=N_1+N_2+\cdots+N_n+n.
$$
Then it is obvious that $\Theta$ satisfies
$$
\Theta(n)>n,\quad \Theta(n+1)>\Theta(n)
$$
and
$$
\tau(p_{\Theta(n)})\geq 2\tau(p_n)+\rho,\quad \tau\in\mathcal T_{\mathrm{inf}}.
$$
\end{proof}

\begin{proposition}\label{prop:uniform-compactness}
Suppose that $\mathcal T_{\mathrm{inf}}$ is nonempty and satisfies condition~(2) of Definition~\ref{def:admissible-scale}. Then, for every $x\in A$,
$$
 \sup_{\tau\in\mathcal{ T}_{\mathrm{inf}}}\nu_s^\tau(x)\longrightarrow0
 \qquad(s\to\infty).
$$
\end{proposition}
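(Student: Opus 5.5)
The plan is to approximate $x$ by a cut-down $p_nxp_n$, whose $\nu$-values vanish once $s$ exceeds the (uniformly bounded) trace of $p_n$. The error then stays uniformly small in norm.

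Fix $x\in A$ and $\varepsilon>0$. Since $(p_n)$ is an approximate unit, choose $n$ with
$$
\lVert x-p_nxp_n\rVert<\varepsilon .
$$
Write $x=p_nxp_n+(x-p_nxp_n)$.

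\textbf{The error term.} For any $t\geq\lVert y\rVert$ we have $(|y|-t)_+=0$, so $\Delta_{y,\tau}(t)=0$. Hence $\nu_s^\tau(y)\leq\lVert y\rVert$ for every $s>0$ and every $\tau$. Applied to $y=x-p_nxp_n$, this gives $\nu_r^\tau(x-p_nxp_n)\leq\varepsilon$ for all $r>0$ and all $\tau\in\mathcal T_{\mathrm{inf}}$.

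\textbf{The main term.} Put $z=p_nxp_n$. Then $|z|$ lies in the corner $p_nAp_n$, so $(|z|-t)_+\leq\lVert z\rVert p_n$ for $t\geq 0$. By Lemma~\ref{R lem}(iii), $(|z|-t)_+\precsim p_n$, and therefore
$$
\Delta_{z,\tau}(0)\leq d_\tau(p_n)=\tau(p_n)\leq S_n:=\sup_{\tau\in\mathcal T_{\mathrm{inf}}}\tau(p_n)<\infty .
$$
Finiteness of $S_n$ is exactly condition~(2) of Definition~\ref{def:admissible-scale}. Thus $\nu_{s}^\tau(z)=0$ whenever $s\geq S_n$. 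Strictly, the infimum defining $\nu$ is attained at $t=0$ because $\Delta_{z,\tau}(0)\leq s$.

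\textbf{Combining.} By Proposition~\ref{prop:nu-inequalities}(i), for $s>S_n$ split $s=(s-S_n)+S_n$ to get
$$
\nu_s^\tau(x)\leq\nu_{S_n}^\tau(z)+\nu_{s-S_n}^\tau(x-z)\leq 0+\varepsilon
$$
for every $\tau\in\mathcal T_{\mathrm{inf}}$. This needs $s-S_n>0$, which is why we take $s>S_n$; the threshold $S_n$ is independent of $\tau$. Since $\nu_s^\tau$ is decreasing in $s$, it follows that $\sup_{\tau\in\mathcal T_{\mathrm{inf}}}\nu_s^\tau(x)\leq\varepsilon$ for all $s>S_n$. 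As $\varepsilon$ was arbitrary, the supremum tends to $0$ as $s\to\infty$.

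The only delicate point is using the \emph{uniform} bound $\sup_{\tau\in\mathcal T_{\mathrm{inf}}}\tau(p_n)<\infty$, rather than per-$\tau$ finiteness, so that the threshold $S_n$ does not depend on $\tau$. The second half of condition~(2) is not needed here.
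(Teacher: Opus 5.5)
Your proof is correct and takes essentially the paper's route. You approximate $x$ in norm by $p_Nxp_N$ and use only the uniform bound $\sup_{\tau\in\mathcal T_{\mathrm{inf}}}\tau(p_N)<\infty$ from condition~(2). The paper simply applies Lemma~\ref{lem:cuntz-cutdown}(ii) directly to get $(|x|-\delta)_+\precsim|p_Nxp_N|\precsim p_N$, hence $\Delta_{x,\tau}(\delta)\leq\tau(p_N)$, whereas you pass the same estimate through the subadditivity of Proposition~\ref{prop:nu-inequalities}(i), which itself rests on that lemma.
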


\begin{proof}
Given $\delta>0$, choose $N$ with
$$\lVert x-p_Nxp_N\rVert<\delta.$$
By Lemma \ref{lem:cuntz-cutdown} (ii), we have
$$(|x|-\delta)_+\precsim|p_Nxp_N|\precsim p_N.$$
Hence,
$$
\Delta_{x,\tau}(\delta)\leq \tau(p_N),\quad \tau\in\mathcal T_{\mathrm{inf}}.$$
Then for every $s\geq\sup_{\tau\in\mathcal T_{\mathrm{inf}}}\tau(p_N)<\infty$ and every $\tau\in\mathcal T_{\mathrm{inf}}$, we have
$$
\nu_s^\tau(x)\leq\delta.$$
\end{proof}
\begin{lemma}\label{lem:support-splitting}
Let $h\in A_+$, let $p\in A$ be a projection with $p\leq p_h$ in $A^{**}$,
and put $c=(1-p)h(1-p)$.  Then $p_c=p_h-p$ and
$$
[h]=[p]+[c]\quad\text{in }\Cu(A).
$$
Consequently, $d_\tau(h)=d_\tau(p)+d_\tau(c)$ for every lower
semicontinuous $2$-quasitrace $\tau$.
\end{lemma}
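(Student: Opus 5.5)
Since $p\le p_h$ and $p_h$ is the support of $h$ in $A^{**}$, we have $p p_h = p = p_h p$. Hence $p$ commutes with $p_h$, and so $p_h-p$ is a projection. The plan is to compute $p_c$ by working in $A^{**}$ (or in any faithful representation), and then obtain the Cuntz-semigroup identity through an orthogonal decomposition.

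\emph{Step 1: the support of $c$.} Because $(1-p)$ commutes with $p_h$ and $h=p_hhp_h$, we get $c=(p_h-p)h(p_h-p)$, which gives $p_c\le p_h-p$. For the reverse inclusion, take a vector $\xi$ in the range of $p_h-p$ with $c\xi=0$. Then $\|h^{1/2}(1-p)\xi\|^2=\langle c\xi,\xi\rangle=0$. Since $(1-p)\xi=\xi$, this says $h^{1/2}\xi=0$, so $p_h\xi=0$, and therefore $\xi=0$. Hence $c$ has no kernel on $(p_h-p)\mathcal H$, and $p_c=p_h-p$.

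\emph{Step 2: $[h]=[p]+[c]$.} Set $k=p+c$, where $p$ and $c$ are orthogonal elements of $A$, since $p(1-p)=0$. Then $[k]=[p\oplus c]=[p]+[c]$, because orthogonal positive elements satisfy $a+b\sim a\oplus b$. It therefore suffices to show $h\sim_{\rm Cu}k$.

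\emph{Step 3: the Cuntz equivalence, which is the main obstacle.} The elements $h$ and $k$ have the same support projection $p+p_c=p_h$. Equal supports do not in general imply Cuntz equivalence, so the actual hereditary subalgebras have to be compared. Since $p\le p_h$ and $p\in A$, we have $p\in\overline{hAh}$: the element $h^{1/n}p$ converges to $p$ in norm. This follows because $\|(1-h^{1/n})p\|^2=\|p(1-h^{1/n})^2p\|$, and $p(1-h^{1/n})^2p$ is a decreasing sequence of elements of $pAp$ whose strong limit $p(1-p_h)p$ is $0$. By Dini's theorem applied to the compact projection $p$ (equivalently, since $pAp$ is unital and $p h^{1/n} p\to p$ strictly in $pAp$), the convergence is in norm. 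Given $p\in\overline{hAh}$, we also have $c=(1-p)h(1-p)\in\overline{hAh}$, because $\overline{hAh}$ is hereditary and $c$ lies in $\overline{hA^\sim h}$. Therefore $k\in\overline{hAh}$, which gives $k\precsim h$.

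Conversely, $h\le 2(php+c)$ by the operator inequality $h\le 2(php+(1-p)h(1-p))$, which holds because $(p-(1-p))h(p-(1-p))\ge0$. Moreover $php\le\|h\|p$. Hence $h\le 2\|h\|p+2c\precsim p+c=k$, using Lemma~\ref{R lem}(iii) together with the scalar invariance of Cuntz classes for the orthogonal sum. This proves $h\sim k$, and so $[h]=[p]+[c]$.

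\emph{Consequence.} Finally, $d_\tau$ is additive on $\Cu(A)$ and is invariant under Cuntz equivalence (Definition~\ref{d def}). Applying it to $[h]=[p]+[c]$ gives $d_\tau(h)=d_\tau(p)+d_\tau(c)$. The only delicate point is the norm convergence $h^{1/n}p\to p$ in Step 3, and for this I would use compactness of $p$ in the Dini-type argument described there.
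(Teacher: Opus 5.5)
Your proof is correct. Step 1 is essentially the paper's argument, but you establish the Cuntz equivalence $h\sim_{\rm Cu}p+c$ by a different route.

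The paper observes that $p\perp c$, so $p+c$ has support $p+p_c=p_h$, and concludes that $p+c$ and $h$ generate the same hereditary subalgebra. This tacitly uses the correspondence between hereditary $C^*$-subalgebras and open projections in $A^{**}$, namely $\overline{aAa}=p_aA^{**}p_a\cap A$.

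You instead prove both subequivalences by hand:
\begin{itemize}
\item $p+c\precsim h$ follows from $p\in\overline{hAh}$, which you get from the Dini argument on the compact state space of the unital corner $pAp$. This is exactly the piece of the open-projection correspondence that is needed, and it is cheap here because $p$ is a projection in $A$.
\item $h\precsim p+c$ follows from $h\le 2(php+c)\le 2\lVert h\rVert p+2c$. This holds for any projection $p$ and does not need $p\le p_h$.
\end{itemize}
Your route is self-contained and elementary at the cost of length; the paper's is a one-liner resting on a structural theorem it does not cite.

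Some small corrections to your write-up:
\begin{itemize}
\item Your remark that ``equal supports do not in general imply Cuntz equivalence'' holds only for supports computed in a non-universal representation. For supports in $A^{**}$, as in this lemma, equal supports do give equal hereditary subalgebras, which is why the paper's shortcut is legitimate. For the same reason, Step 1 should be done in $A^{**}$ (the universal representation), not in an arbitrary faithful representation of $A$.
\item To get $p\in\overline{hAh}$, state that $h^{1/n}ph^{1/n}\to p$. This follows from $h^{1/n}p\to p$ and the boundedness of $\lVert h^{1/n}\rVert$.
\item For $c\in\overline{hAh}$, the phrase ``$c$ lies in $\overline{hA^\sim h}$'' merely restates the goal. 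The actual reason is that $c=h-ph-hp+php$ and both $p$ and $h$ lie in the algebra $\overline{hAh}$.
\item In the Dini step, the pointwise convergence on states of $pAp$ uses that these states extend to normal states of $A^{**}$. Normal states respect monotone strong limits.
\end{itemize}
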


\begin{proof}
Consider the universal representation of $A$ on a Hilbert space $\mathcal{H}$, and view $A^{**}$ as a subalgebra of $B(\mathcal{H})$.

Note that
$$c = (1-p)p_hhp_h(1-p) = (p_h-p)h(p_h-p).$$
Consequently,
$$p_{c} \le p_h-p.$$
Then for any  $\xi \in (p_h-p)\mathcal{H}$, if we assume that $c\xi=0$, then
$$0 = \langle c\xi,\xi\rangle= \langle(1-p)h(1-p)\xi,\xi\rangle= \langle h\xi,\xi\rangle= ||h^{1/2}\xi||^{2}.$$
Thus,
$h^{1/2}\xi=0.$ 
Since $\ker h^{1/2} = (1-p_h)\mathcal{H}$, it follows that $\xi=0$.
Thus
$$p_{c} =p_{h}-p.$$

Since $pc = cp = 0,$  and  $p+c$ has support $p_h$. Hence  
$p+c$ and $h$ generate the same hereditary subalgebra, and hence $p+c\sim h$. Now we have $[h] = [p]+[c]$.

Applying $d_\tau$ to this equality gives
$$d_{\tau}(h) = d_{\tau}(p)+d_{\tau}(c).$$
\end{proof}

\begin{definition}\label{def:regular-tail}
Let $\Theta: \mathbb N \to \mathbb N$ be a strictly increasing map satisfying $\Theta(n)>n$ and fix
$N\geq0$.  Let $(\alpha_n)$ be a decreasing sequence of positive numbers tending to $0$  and let
$$
 a=\sum_{n=N+1}^\infty\alpha_{n}q_{n}.
$$
We say $a$ is a \emph{$(\Theta,N)$-regular tail} if there exists $L_{\Theta,N}>0$ such that
$$
\sup_{n\geq N+1}
 \frac{\alpha_{n}}{\alpha_{\Theta(n)}}\leq L_{\Theta,N}.
$$
\end{definition}

\begin{theorem}\label{thm:square-factorization}
Suppose that $A$ has an admissible quasitracial projection scale
$$
\left( \mathcal T, \{p_n\} , \rho, \mathcal T_{\mathrm{inf}}, (\mathcal T_{\mathrm{fin}},e,\varepsilon_0) \right).
$$
Let $a$ be  a $(\Theta,N)$-regular tail  as in Definition \ref{def:regular-tail}. Let $b\in A_+$ and $C>0$.

Assume that we have the followings:

{\rm (i)} if $\mathcal{T}_{\mathrm{inf}}\neq \varnothing$,
suppose also that $\Theta$ satisfies the conclusions in Lemma \ref{lem:choose-Theta} and
$$\nu_s^\tau(a)\leq C\nu_s^\tau(b),
 \quad s>0,\ \tau\in\mathcal T_{\mathrm{inf}}.$$

{\rm (ii)} if $\mathcal T_{\mathrm{fin}}\neq\varnothing$, suppose also that $\frac{\alpha_{N+1}}{2C}<\varepsilon_0$ and
$$
\sup_{j\geq 1}\tau(p_j)-\tau(p_N)+\rho<d_\tau((b-\varepsilon_0)_+),
\quad\tau\in\mathcal T_{\mathrm{fin}}.
$$

Then there exists $x\in A$ such that
$$
 x^*x=a^2,\qquad
 xx^*\leq4C^2L_{\Theta,N}^2\lVert b\rVert b.
$$
Moreover, if $J$ is a semiprime ideal of $A$ and $b\in J_+$, then $a\in J$.

\end{theorem}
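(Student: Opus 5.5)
The plan is to build $x$ as a weighted sum of partial isometries. We look for partial isometries $v_n\in A$ ($n\geq N+1$) with
$$
v_n^*v_n=q_n,\qquad v_nv_n^*=:e_n,\qquad e_ne_m=0\ (n\neq m),
$$
where each $e_n$ lies in a hereditary subalgebra of $b$ on which $b$ is bounded below at a level comparable to $\alpha_n/(2CL_{\Theta,N})$. Then we put $x=\sum_{n>N}\alpha_nv_n$. This series converges in norm because the $v_n$ have orthogonal sources and ranges and $\alpha_n\to0$. We get $x^*x=\sum\alpha_n^2q_n=a^2$ automatically, and $xx^*=\sum\alpha_n^2e_n$.

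\emph{Target inequality.} If $e_n\leq P_{(\eta,\infty)}(b)$ in $A^{**}$, then $e_n\leq b/\eta$. So if a family $\{e_n\}$ all sit under a common spectral band $P_k=\mathds 1_{(\eta_k,\eta_{k-1}]}(b)$ and satisfy $\alpha_n\leq 2CL_{\Theta,N}\eta_k$, then
$$
\sum\alpha_n^2e_n\leq 4C^2L_{\Theta,N}^2\eta_k^2P_k\leq 4C^2L_{\Theta,N}^2\lVert b\rVert\,\eta_kP_k\leq 4C^2L_{\Theta,N}^2\lVert b\rVert\, bP_k.
$$
Summing over pairwise orthogonal bands gives $xx^*\leq4C^2L_{\Theta,N}^2\lVert b\rVert b$.

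\emph{Grouping and levels.} Group the indices into blocks $B_k=[\Theta^{k}(N+1),\Theta^{k+1}(N+1))$. On $B_k$ we use the level $\eta_k=\alpha_{\Theta^{k+1}(N+1)}/(2C)$. Regularity gives $\alpha_n\leq L_{\Theta,N}\alpha_{\Theta(n)}$, and one must check this yields $\alpha_n\leq 2CL_{\Theta,N}\eta_k$ on $B_k$. Since the bands are not in $A$, we replace $P_k$ by $f_k(b)$, with continuous $f_k$ supported in $(\eta_k,\eta_{k-1})$. Pairwise orthogonality of the hereditary algebras $A_{f_k(b)}$ then gives orthogonality across blocks for free.

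\emph{Embedding within a block.} Inside each block we embed $q_n$ one index at a time, using Lemma~\ref{lem:projection-lifting}, into $c=(1-Q)f_k(b)(1-Q)$, where $Q$ is the sum of the $e_m$ already placed. Lemma~\ref{lem:support-splitting} gives $d_\tau(c)=d_\tau(f_k(b))-\tau(Q)$, and condition~\ref{key 1} then yields $q_n\precsim c$ as long as $d_\tau(q_n)+\rho<d_\tau(c)$ for all $\tau\in\mathcal T$.

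\emph{Room in $\mathcal T_{\mathrm{inf}}$.} Here we use Proposition~\ref{prop:weighted-diagonal}: $\nu_s^\tau(a)\geq\alpha_{\Theta^{k+1}(N+1)}$ for $s<\tau(p_{\Theta^{k+1}(N+1)})-\tau(p_N)$. Hypothesis (i) and Corollary~\ref{lem:threshold} then give
$$
d_\tau((b-\eta)_+)\geq\tau(p_{\Theta^{k+1}(N+1)})-\tau(p_N)\qquad(\eta<\eta_k\cdot 2).
$$
Lemma~\ref{lem:choose-Theta} ($\tau(p_{\Theta(m)})\geq2\tau(p_m)+\rho$) supplies the slack needed to absorb both $\rho$ and the projections already used.

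\emph{Room in $\mathcal T_{\mathrm{fin}}$.} Here hypothesis (ii), with $\alpha_{N+1}/(2C)<\varepsilon_0$ so that all levels are below $\varepsilon_0$, gives $d_\tau((b-\varepsilon_0)_+)$ exceeding the total remaining mass $\sup_j\tau(p_j)-\tau(p_N)$ plus $\rho$. This pays for all the $q_n$ at once.

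\emph{Main obstacle.} I expect the hardest step to be reconciling the lower bounds, which are stated for the cutdowns $(b-\eta)_+$, with the need to place ranges inside \emph{disjoint} bands $f_k(b)$. We have no upper bound on $d_\tau((b-\eta_{k-1})_+)$, so the dimension of a single band is not directly controlled. My first attempt would avoid genuine bands. Instead I would nest the blocks: place block $k$ inside $(b-\eta_k)_+$ cut down by $1-Q_{<k}$, where $Q_{<k}$ is the sum of all earlier ranges, and count the total dimension rather than per-band dimension (the doubling from Lemma~\ref{lem:choose-Theta} is meant to make the cumulative count work).

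Nested placement loses the commutation of $e_n$ with spectral projections of $b$. To recover $xx^*\lesssim b$, I would then write $e_n=b^{1/2}y_nb^{1/2}$ with $\lVert y_n\rVert\leq1/\eta_k$. The aim is to bound $\sum\alpha_n^2y_n$ through a telescoping estimate over $k$, using that $\eta_k$ decreases geometrically relative to the $\alpha$'s by regularity, to obtain the constant $4C^2L_{\Theta,N}^2\lVert b\rVert$.

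\emph{Final assertion.} Let $J$ be semiprime and $b\in J_+$. By Proposition~\ref{prop:semiprime-structure}(ii), $xx^*\leq4C^2L_{\Theta,N}^2\lVert b\rVert b$ gives $xx^*\in J$. Strong invariance (iii) gives $a^2=x^*x\in J$. Closure under positive roots (iv) gives $a=(a^2)^{1/2}\in J$.
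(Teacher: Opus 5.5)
There is a genuine gap at exactly the step you flag as the main obstacle. Your nested placement already produces the right structure, but your plan for turning it into $xx^*\lesssim b$ rests on a false premise. That plan was to write $e_n=b^{1/2}y_nb^{1/2}$ with $\lVert y_n\rVert\le\eta_k^{-1}$ and telescope over $k$ using geometric decay. Regularity only says $\alpha_n\le L_{\Theta,N}\alpha_{\Theta(n)}$, i.e.\ the $\alpha$'s decay along $\Theta$ \emph{no faster} than geometrically, so there is no fast decay to exploit.

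Bounding each block separately gives $\sum_{n\in B_k}\alpha_n^2e_n\le\alpha_{\Theta^k(N+1)}^2\eta_k^{-1}b$. Summing over $k$ produces the constant $\sum_k\alpha_{\Theta^k(N+1)}^2\eta_k^{-1}\ge 2C\sum_k\alpha_{\Theta^k(N+1)}$. This diverges for the tails the theorem is actually applied to: for those built in Lemma~\ref{lem:slow-variation} one has $\alpha_{\Theta^j(n)}\gtrsim j^{-1/2}$.

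The missing idea is that you need no commutation of the individual ranges with $b$. You only need the \emph{cumulative} ranges to be dominated by spectral projections of $b$, followed by Abel summation. Use per-index levels $\delta_n=\alpha_{\Theta(N+n)}/(2C)$ and $h_n=(b-\delta_n)_+$, and place each new range under $p_{h_n}-f_{n-1}$ via Lemma~\ref{lem:support-splitting}. Then $f_n=r_1+\dots+r_n\le\mathds{1}_{(\delta_n,\infty)}(b)$, hence
\[
 xx^*=\sum_{n\geq1}\alpha_{N+n}^2r_n
 =\sum_{n\geq1}(\alpha_{N+n}^2-\alpha_{N+n+1}^2)f_n
 \le\sum_{n\geq1}(\alpha_{N+n}^2-\alpha_{N+n+1}^2)\mathds{1}_{(\delta_n,\infty)}(b)=g(b).
\]
Let $n_0$ be the first index with $\delta_{n_0}<t$. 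For $t>0$ the scalar function satisfies $g(t)=\alpha_{N+n_0}^2\le L_{\Theta,N}^2\alpha_{\Theta(N+n_0)}^2<4C^2L_{\Theta,N}^2t^2$. Functional calculus then gives $xx^*\le4C^2L_{\Theta,N}^2b^2\le4C^2L_{\Theta,N}^2\lVert b\rVert b$, which is how the paper concludes.

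Your block levels also leave a hole in the $\mathcal T_{\mathrm{inf}}$ bookkeeping. Suppose every $q_n$ with $n\in B_k$ is placed under $(b-\eta_k)_+$, where $\eta_k=\alpha_M/(2C)$ and $M=\Theta^{k+1}(N+1)$. Your estimate then only guarantees room $\tau(p_M)-\tau(p_{n-1})$ for $q_n$. For $n=M-1$ this is $\tau(q_{M-1})+\tau(q_M)$, which need not exceed $\tau(q_{M-1})+\rho$; for example, in $D\otimes\K$ one has $\tau(q_n)=1$ while $\rho$ may be $\ge1$.

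Lemma~\ref{lem:choose-Theta} supplies slack only when the level used for $q_n$ is at most $\alpha_{\Theta(n)}/(2C)$. In that case the room is $\tau(p_{\Theta(n)})-\tau(p_{n-1})\ge\tau(q_n)+\tau(p_n)+\rho$, which is why the paper uses one level per index. Lowering your levels by one block also works, but only with the worse constant $4C^2L_{\Theta,N}^4$. Your $\mathcal T_{\mathrm{fin}}$ count and the final semiprime deduction are correct as written.
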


\begin{proof}
For $n\geq1$, set
$$
 \delta_n=\frac{\alpha_{\Theta(N+n)}}{2C},
 \qquad h_n=(b-\delta_n)_+,
 \qquad p_{h_n}= \mathds{1}_{(0,\infty)}(h_n)\in A^{**}.
$$
The sequence $(p_{h_n})$ is increasing.

Now we recursively construct mutually orthogonal projections $r_n\in A$ $(n\geq 1)$ satisfying
$$
 r_n\sim_{\rm MvN}q_{N+n},\quad f_n:=r_1+\cdots+r_n\leq p_{h_n}.
$$

Set $r_0=f_0=0$, suppose that $r_0,r_1,\ldots,r_{n-1}$ have been constructed, since $f_{n-1}\leq p_{h_n}$ $(n\geq 1)$, put $c_n=(1-f_{n-1})h_n(1-f_{n-1})$.

Applying Lemma~\ref{lem:support-splitting} for $c_n$, $f_{n-1}$ (in place of $p$), $h_n$ yields $p_{c_n}=p_{h_n}-f_{n-1}$,
$$
[h_n]=[f_{n-1}]+[c_n]\quad{\rm and}\quad
 d_\tau(h_n)=d_\tau(f_{n-1})+d_\tau(c_n).
$$

For any $\tau\in\mathcal T_{\mathrm{inf}}$, if
$0<s<\tau(p_{\Theta(N+n)})-\tau(p_N)$,  Proposition~\ref{prop:weighted-diagonal} implies
$$
\nu_s^\tau(a)\geq\alpha_{\Theta(N+n)}.$$
 Then we have
$$
2\delta_n=\frac{\alpha_{\Theta(N+n)}}{C}\leq \frac{\nu_s^\tau(a)}{C}\leq \nu_s^\tau(b).$$
Now applying Lemma~\ref{lem:threshold} for $\delta_n$ yields
$$
 d_\tau(h_n)=d_\tau((b-\delta_n)_+)\geq \tau(p_{\Theta(N+n)})-\tau(p_N).
$$

By the inductive hypothesis, $\tau(r_i)=\tau(q_{N+i})$ for $1\leq i<n$, then
$$d_\tau(f_{n-1})=\sum_{i=1}^{n-1}\tau(r_i)=
\sum_{i=N+1}^{N+n-1}\tau(q_i)=\tau(p_{N+n-1})-\tau(p_N)<\infty.$$
Now we have
\begin{align*}
 d_\tau(c_n)
 &= d_\tau(h_n)-d_\tau(f_{n-1})\\
 & \geq \tau(p_{\Theta(N+n)})-\tau(p_{N+n-1})   \\
 &=\tau(q_{N+n})+\tau(p_{\Theta(N+n)})-\tau(p_{N+n}) \\
 & >d_\tau(q_{N+n})+\rho,
\end{align*}
where the last inequality uses Lemma \ref{lem:choose-Theta}.

Now let $\tau\in\mathcal T_{\mathrm{fin}}$.  Since
$$\delta_n=\frac{\alpha_{\Theta(N+n)}}{2C}\leq \frac{\alpha_{N+1}}{2C}<\varepsilon_0,$$ one has
$$[(b-\varepsilon_0)_+]\leq[h_n].$$
Now we have
\begin{align*}
 d_\tau(c_n)&= d_\tau(h_n)-d_\tau(f_{n-1})\\
 &\geq d_\tau((b-\varepsilon_0)_+)
      -(\tau(p_{N+n-1})-\tau(p_N))\\
 &>\sup_{j\geq 1}\tau(p_j)-\tau (p_{N+n-1})+\rho\\
  &\geq d_\tau(q_{N+n})+\rho.
\end{align*}

Finally, we obtain
$$
d_\tau(q_{N+n})+\rho <d_\tau(c_n) \quad {\rm for \,\, all}\,\,\tau\in \mathcal T_{\mathrm{inf}}\cup \mathcal T_{\mathrm{fin}}.
$$
Hence, $q_{N+n}\precsim c_n$. By  Lemma \ref{lem:projection-lifting}, there exist a projection $r_n\in A_{c_n}$ and $\omega_n\in A$ such that $$q_{N+n}=\omega_n^*\omega_n\sim_{\rm MvN} \omega_n\omega_n^*=r_n.$$
Then $r_n\leq p_{h_n}-f_{n-1}$, this completes the induction.

By the inductive procedure, we have constructed a sequence of partial isometries $\{\omega_n\}$. Let $$x=\sum_{n=1}^\infty\alpha_{N+n}\omega_n,$$ then  $x\in A$ and
$$
 x^*x=\sum_{n=1}^\infty\alpha_{N+n}^2q_{N+n}=a^2,
 \qquad xx^*=\sum_{n=1}^\infty\alpha_{N+n}^2r_n.
$$

Since
$$
\sum_{n=1}^\infty\alpha_{N+n}^2r_n=\sum_{n=1}^\infty (\alpha_{N+n}^2-\alpha_{N+n+1}^2)(r_1+r_2\cdots+ r_n).
$$
Now we have
$$
 xx^*=\sum_{n=1}^\infty(\alpha_{N+n}^2-\alpha_{N+n+1}^2)f_n
 \leq\sum_{n=1}^\infty(\alpha_{N+n}^2-\alpha_{N+n+1}^2)p_{h_n}=g(b),
$$
where $\gamma_n=\alpha_{N+n}^2-\alpha_{N+n+1}^2\geq0$ and
$$
 g(t)=\sum_{n=1}^\infty\gamma_n
 \mathds{1}_{(\delta_n,\infty)}(t).
$$
For every $t>0$, let $n_0$ be the first index with
$\alpha_{\Theta(N+n_0)}<2Ct$.  Then
$$
 g(t)=\sum_{n=n_0}^\infty\gamma_n
 =\alpha_{N+n_0}^2
 \leq L_{\Theta,N}^2\alpha_{\Theta(N+n_0)}^2
 <4C^2L_{\Theta,N}^2t^2.
$$
Borel functional calculus in $A^{**}$ and $g(0)=0$
 yield
 $$xx^*\leq4C^2L_{\Theta,N}^2b^2\leq 4C^2L_{\Theta,N}^2\lVert b\rVert b.$$
If $J$ is semiprime, by Proposition \ref{prop:semiprime-structure}, one has $xx^*\in J$.  Strong invariance gives $a^2=x^*x\in J$, and positive root closure gives $a\in J$.
\end{proof}

\section{The quasitracial-scale theorem}\label{sec:main}

In this section, we retain the conventions established at the beginning of Section 4. 

\begin{lemma}\label{lem:slow-variation}
Let $\Theta: \mathbb N \to \mathbb N$ be a strictly increasing map satisfying $\Theta(n)>n$. If
$M$ is  a maximal ideal of $A$, then there is  a decreasing sequence of positive numbers $(\beta_n)$ converging to $0$ such that
$$
 \beta_{\Theta(n)}\geq\frac12\beta_n
$$
for every $n\geq1$ and
$$
 b:=\sum_{n=1}^\infty\beta_nq_n\in A_+\setminus M.
$$
\end{lemma}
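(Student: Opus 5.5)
The plan is to start from a positive element outside $M$ and build $b$ so that this element factors through $b$. Since $M$ is proper, Proposition~\ref{prop:max-semiprime} and Proposition~\ref{prop:semiprime-structure}(i) show that $M$ is positively spanned. Hence there is some $c\in A_+\setminus M$. We will choose the weights $\beta_n$ adapted to $c$ and the given $\Theta$, and then show that $b\in M$ would force $c\in M$. The mechanism is a factorization $c^{1/2}=bz$ with $z\in A$. If $b\in M$, this gives $c=z^*b^2z\in M$, a contradiction. No semiprimeness of $M$ is needed for this step, only that $M$ is a two-sided ideal.

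\emph{Construction of the blocks.} Put $\eta_m=\lVert (1-p_m)c^{1/2}\rVert$, computed in $A^\sim$. Since $(p_n)$ is an approximate unit, $\eta_m\to0$. We choose indices $1=n_0<n_1<n_2<\cdots$ inductively by two requirements:
\[
\eta_{n_k-1}\leq 4^{-k},
\qquad
n_{k+1}\geq\Theta(n_k).
\]
Both are possible because $\eta_m\to0$ and $\Theta(n_k)$ is finite. We then define $\beta_n=2^{-k}$ for $n_k\leq n<n_{k+1}$. This sequence is decreasing, positive and tends to $0$, so $b=\sum_n\beta_nq_n$ converges in norm and lies in $A_+$.

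\emph{The doubling condition.} Take $n$ with $n_k\leq n<n_{k+1}$. Since $\Theta$ is strictly increasing, $\Theta(n)<\Theta(n_{k+1})\leq n_{k+2}$. Also $\Theta(n)>n\geq n_k$. So $\Theta(n)$ lies in block $k$ or block $k+1$. Therefore $\beta_{\Theta(n)}\geq 2^{-k-1}=\tfrac12\beta_n$.

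\emph{The factorization.} Set $Q_k=p_{n_{k+1}-1}-p_{n_k-1}=\sum_{n_k\leq n<n_{k+1}}q_n$ and define
\[
z=\sum_{k\geq0}2^{k}Q_kc^{1/2}.
\]
Since $Q_k\leq 1-p_{n_k-1}$, we get $\lVert Q_kc^{1/2}\rVert\leq\eta_{n_k-1}\leq4^{-k}$. So the $k$-th term has norm at most $2^{-k}$, and the series converges absolutely in $A$.

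Next we compute $bz$. Using $bQ_k=2^{-k}Q_k$, we get $bz=\sum_kQ_kc^{1/2}$. The partial sums of this series are $p_{n_{K+1}-1}c^{1/2}$, which converge to $c^{1/2}$. Hence $bz=c^{1/2}$ and
\[
c=(bz)^*(bz)=z^*b^2z.
\]
If $b$ were in $M$, then $b^2z\in M$, and so $c\in M$, contradicting the choice of $c$. Thus $b\in A_+\setminus M$.

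The main obstacle is making the slow-variation requirement $\beta_{\Theta(n)}\geq\frac12\beta_n$ compatible with $b$ still dominating $c$. The block construction handles this by letting $\beta$ halve only once per block, and by making each block long enough ($n_{k+1}\geq\Theta(n_k)$) that $\Theta$ moves every index forward by at most one block. The remaining points need only routine care. The index shift is $\eta_{n_k-1}$ rather than $\eta_{n_k}$, chosen so that $Q_k$ is supported after $p_{n_k-1}$. The telescoping also uses $p_{n_0-1}=p_0=0$.
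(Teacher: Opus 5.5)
Your proof is correct and is essentially the paper's argument: blocks with $n_{k+1}\geq\Theta(n_k)$, so that $\Theta$ advances any index by at most one block; weights constant on each block; and a factorization of the chosen element outside $M$ through $b$, using tail decay. The paper uses weights $1/k$ and $d=cb$ with $c=\sum_k k\,dQ_k$, where you use $2^{-k}$ and $c^{1/2}=bz$. The only slip is harmless: at $k=0$ the requirement $\eta_{n_0-1}=\lVert c^{1/2}\rVert\leq 1$ need not hold, but convergence of $z$ does not need it (alternatively, rescale $c$).
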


\begin{proof}
Propositions~\ref{prop:max-semiprime} and
\ref{prop:semiprime-structure} imply that $M$ is positively spanned.  Choose
$d\in A_+\setminus M$. Set $N_1=1$. Having chosen $N_k$, choose $N_{k+1}$ such that
$$
 N_{k+1}>\Theta(N_{k})\quad
 {\rm and}\quad
 \lVert d(1-p_{N_k-1})\rVert\leq\frac{1}{k\cdot2^{k}},
$$
Denote
$$
 Q_k=p_{N_{k+1}-1}-p_{N_k-1}=\sum_{n=N_k}^{N_{k+1}-1}q_n,
$$
and let
$$
 b=\sum_{k=1}^\infty\frac1kQ_k,\quad c=\sum_{k=1}^\infty k\,dQ_k
$$
Then $b,c\in A$ and $cb=d$.  Hence, $b\notin M$.

Set
$$
\beta_n=\begin{cases}
          1, & \mbox{if } 1\leq n<N_2 \\
          1/2, & \mbox{if } N_2\leq n<N_{3} \\
          1/3, & \mbox{if } N_3\leq n<N_{4} \\
          \vdots & \mbox{\vdots}
        \end{cases},
$$
        Then $$
     b=\sum_{n=1}^\infty\beta_nq_n.
        $$

For any $n\geq 1$, if $N_k\leq n<N_{k+1}$, then
$$
 n<\Theta(n)\leq\Theta(N_{k+1})<N_{k+2}.
$$
and
$$
 \beta_{\Theta(n)}\geq\frac1{k+1}\geq\frac12\beta_n.
$$
\end{proof}

\begin{remark} \label{rmk:slow-compare}
Suppose that $\mathcal T_{\mathrm{inf}}$ is nonempty and satisfies condition~(2) of Definition~\ref{def:admissible-scale}. Then Lemma \ref{lem:choose-Theta} gives a map $\Theta: \mathbb{N}\to \mathbb{N}$  such that
$$
 \tau(p_{\Theta(n)})\geq2\tau(p_n),\quad n\geq1,\,\,
 \tau\in\mathcal T_{\mathrm{inf}}.$$
Let $b$ be the element satisfy the properties in Lemma \ref{lem:slow-variation}. Given $m,r\in\mathbb{N}$ with $2^r\geq m\geq 2$. If there exists $n\geq n_0$ such that
$$
\tau(p_{n_0})\leq \tau(p_{n-1})\leq s<\tau(p_n),$$
then
$$
 ms<m\tau(p_n)
 \leq2^r\tau(p_n)
 \leq \tau(p_{\Theta^{r}(n)}),$$
where $\Theta^{r}$ is the $r$-fold composition of $\Theta$.

Now Proposition \ref{prop:weighted-diagonal} yields
$$
 \nu_{ms}^\tau(b)
 \geq\beta_{\Theta^{r}(n)}
 \geq \dfrac{\beta_n}{2^{r}}
 =\dfrac{\nu_s^\tau(b)}{2^{r}}.
$$
Then we have
$$
 \nu_{ms}^\tau(b)\geq \frac{\nu_s^\tau(b)}{2^{r}},
 \quad
 s\geq \tau(p_{n_0}),\ \tau\in\mathcal T_{\mathrm{inf}}.
$$
\end{remark}

\begin{theorem}\label{thm:quasitracial-scale}
If $A$ has an admissible quasitracial projection scale
$$
\left( \mathcal T, \{p_n\} , \rho, \mathcal T_{\mathrm{inf}}, (\mathcal T_{\mathrm{fin}},e,\varepsilon_0) \right).
$$ Then $A$ has no dense maximal ideal.
\end{theorem}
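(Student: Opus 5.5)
Suppose, towards a contradiction, that $M$ is a dense maximal ideal of $A$. By Theorem~\ref{thm:dense-closed}, $\Ped(A)\subseteq M$, so every $p_n\in M$. By Propositions~\ref{prop:max-semiprime} and \ref{prop:semiprime-structure}, $M$ is semiprime, hereditary and positively spanned. The plan is to produce an element $b\in A_+\setminus M$. We then use maximality to obtain some $b'\in M_+$ that is ``as large as $b$'' in the $\nu^\tau$ sense. Theorem~\ref{thm:square-factorization} then forces a regular tail of $b$ into $M$, hence $b\in M$, a contradiction.

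\textbf{Step 1: choice of $b$.} If $\mathcal T_{\mathrm{inf}}\neq\varnothing$, fix $\Theta$ from Lemma~\ref{lem:choose-Theta}; otherwise take $\Theta(n)=n+1$. Lemma~\ref{lem:slow-variation} gives $b=\sum\beta_nq_n\notin M$ with $\beta_{\Theta(n)}\ge\frac12\beta_n$. Then every tail $a_N=\sum_{n>N}\beta_nq_n$ is a $(\Theta,N)$-regular tail with $L_{\Theta,N}=2$. Since $b-a_N=\sum_{n\le N}\beta_nq_n\in\Ped(A)\subseteq M$, we get $a_N\notin M$ for every $N$.

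\textbf{Step 2: a large element of $M$.} Since $M$ is maximal and $b\notin M$, we have $M+AbA+\mathbb{C}b+bA+Ab=A$. We need an element of $M$ that dominates $a_N$ in the sense of hypotheses (i) and (ii) of Theorem~\ref{thm:square-factorization}. The natural candidate uses the fact that $A\neq M$ is ``generated'' by $b$ modulo $M$. Pick $d\in A_+\setminus M$ independent of $b$, say $d=e$ or a suitably spread diagonal element. Write $d=m+\sum_i u_ibv_i+\lambda b+\cdots$ with $m\in M$. Then use Proposition~\ref{prop:nu-inequalities}(i),(ii) to bound $\nu^\tau(d)$ by $\nu^\tau(m)$ plus finitely many multiples of $\nu^\tau_{s/k}(b)$.

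A cleaner route, which I would try first, reverses the roles. I would show that there is a diagonal element $b'=\sum\beta'_nq_n\in M$ with $\beta'_n\to0$ much more slowly than $\beta_n$. Then hypothesis (i), $\nu_s^\tau(a_N)\le C\nu_s^\tau(b')$, follows from Proposition~\ref{prop:weighted-diagonal} together with uniform comparability of $\tau(p_n)$ over $\mathcal T_{\mathrm{inf}}$, i.e.\ the sup and inf in condition (2). The rescaling $\nu_{ms}\ge 2^{-r}\nu_s$ in Remark~\ref{rmk:slow-compare} absorbs the constants coming from the finitely many summands in the representation of an element of $A$ through $b'$. To get such a $b'\in M$, apply maximality to $b'\notin M$ instead: if every slowly decaying diagonal lay outside $M$, it would generate $A$ modulo $M$. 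Expressing $b$ itself, or the element $c$ with $cb=d$ from Lemma~\ref{lem:slow-variation}, in terms of it would then give the needed $\nu$-domination. This yields $b\in M$ via Theorem~\ref{thm:square-factorization}, a contradiction either way.

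\textbf{Step 3: the finite part.} For $\tau\in\mathcal T_{\mathrm{fin}}$, hypothesis (ii) requires $d_\tau((b'-\varepsilon_0)_+)$ to exceed $\sup_j\tau(p_j)-\tau(p_N)+\rho$. Condition (3) makes this tail mass uniformly small for large $N$. So it suffices that $b'$ dominate $e$ near level $\varepsilon_0$, which we arrange by adding $e\in\Ped(A)\subseteq M$ to $b'$ (after normalizing $\|e\|$) and using $\inf_\tau d_\tau((e-\varepsilon_0)_+)>\rho$. We then choose $N$ large so that $\beta_{N+1}/(2C)<\varepsilon_0$ as well.

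\textbf{Main obstacle.} I expect the hardest part to be Step 2: turning the algebraic relation $A=M+\langle b\rangle$ into the quantitative inequality $\nu_s^\tau(a_N)\le C\nu_s^\tau(b')$ with $b'\in M$, uniformly in $s$ and $\tau\in\mathcal T_{\mathrm{inf}}$. The $s$-parameter degrades by the number of summands, which is why the doubling control of Remark~\ref{rmk:slow-compare} and the uniform bound of Proposition~\ref{prop:uniform-compactness} must be combined carefully.
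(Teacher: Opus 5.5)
\textbf{There is a genuine gap in Step~2, which is the heart of the proof.} Your overall architecture matches the paper's: the same $b$ from Lemma~\ref{lem:slow-variation}, an element of $M$ fed into Theorem~\ref{thm:square-factorization}, and $e$ added to handle $\mathcal T_{\mathrm{fin}}$ (the paper's Case~2b). But you leave Step~2 unresolved, and neither of your routes works as stated.

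The ``cleaner route'' has the direction reversed. Suppose a slowly decaying $b'\notin M$ generates $A$ modulo $M$. Writing $b=m+\sum_j x_jb'y_j$ only gives $\nu^\tau_{(k+1)s}(b)\le\nu^\tau_s(m)+K\nu^\tau_s(b')$. This holds automatically when $b'$ decays more slowly than $b$, so it puts no lower bound on $m$ and produces no dominating element of $M$. Maximality tells you nothing directly about elements inside $M$.

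Your first route has the right shape, but the element $d$ is not chosen correctly. Note that $d=e$ is not even outside $M$, since $e\in\Ped(A)\subseteq M$. From $d=m+\sum_iu_ibv_i$ you learn something about $m$ only if the $b$-contribution $K\nu^\tau_s(b)$ is negligible compared with $\nu^\tau(d)$. The doubling estimate of Remark~\ref{rmk:slow-compare} repairs only the index shift $s\mapsto(k+1)s$, not the constant $K$. So with $d$ comparable to $b$ (for instance $d=b$, or tails of $b$) you get $2^{-r}\nu^\tau_s(b)\le\nu^\tau_s(m)+K\nu^\tau_s(b)$, which is vacuous.

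\textbf{The missing idea is to expand the square root.} Since $b\notin M$ and $M$ is maximal, write $b^{1/2}=z+\sum_{j=1}^kx_jby_j$ with $z\in M$ and $x_j,y_j\in A^\sim$. Proposition~\ref{prop:nu-inequalities} then gives
\[
\nu^\tau_{(k+1)s}(b^{1/2})\le\nu^\tau_s(z)+K\nu^\tau_s(b).
\]
Take the square root of the doubling estimate and use Proposition~\ref{prop:nu-inequalities}(iv); the left side is then at least $2^{-r/2}\nu^\tau_s(b)^{1/2}$.

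By Proposition~\ref{prop:uniform-compactness}, $\sup_{\tau\in\mathcal T_{\mathrm{inf}}}\nu^\tau_s(b)\to0$. Hence for $s\ge s_*$ the term $K\nu^\tau_s(b)$ is at most $2^{-1-r/2}\nu^\tau_s(b)^{1/2}$. This yields $\nu^\tau_s(b^{1/2})\le2^{1+r/2}\nu^\tau_s(z)$, uniformly over $\mathcal T_{\mathrm{inf}}$. This square-root gain, $\nu(b^{1/2})=\nu(b)^{1/2}\gg K\nu(b)$, is what absorbs the constant $K$.

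Consequently the tails to use are those of $b^{1/2}$, not of $b$: take $a_N=b^{1/2}(1-p_N)$, which is regular with $L=\sqrt2$. Choose $N$ with $\inf_{\mathcal T_{\mathrm{inf}}}\tau(p_N)>s_*$. The shift formula of Proposition~\ref{prop:weighted-diagonal} then gives $\nu^\tau_s(a_N)\le2^{1+r/2}\nu^\tau_s(|z|)$ for all $s>0$.

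Now apply Theorem~\ref{thm:square-factorization} with $|z|$, or with $|z|+e$ when $\mathcal T_{\mathrm{fin}}\neq\varnothing$. This puts $a_N$ in $M$, so $b(1-p_N)=a_N^2\in M$ and hence $b\in M$, a contradiction.

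Finally, when $\mathcal T_{\mathrm{inf}}=\varnothing$ no such $z$ is needed. Hypothesis~(i) of Theorem~\ref{thm:square-factorization} is vacuous, and you apply it directly with $e\in M$ and $C=1$.
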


\begin{proof}
Assume that $M$ is a dense maximal ideal of $A$,
Theorem~\ref{thm:dense-maximal} gives
$$
 p_n\in M\,\,(n\geq1),
 \qquad \Ped(A)\subseteq M.
$$

If $\mathcal T_{\mathrm{inf}}\neq\varnothing$, choose $\Theta$ as in
Lemma~\ref{lem:choose-Theta}; otherwise take $\Theta(n)=n+1$.
By Lemma~\ref{lem:slow-variation}, there exists an element
$$
 b=\sum_{n=1}^\infty\beta_nq_n\in A_+\setminus M.
$$
For every $k\geq1$, set
$$
 a_k=b^{1/2}(1-p_k)
 =\sum_{n=k+1}^\infty \alpha_nq_{n},
 \qquad
 \alpha_k=\sqrt{\beta_{k}}.
$$
Then
$$
 \frac{\alpha_k}{\alpha_{\Theta(k)}}
 =\left(\frac{\beta_{k}}{\beta_{\Theta({k})}}\right)^{1/2}
 \leq\sqrt2.
$$
Then $a_k$ is a $(\Theta,k)$-regular tail and we can choose $L_{\Theta,k}=\sqrt{2}$ for all $k$.

 We proceed with the proof by considering several cases.

{\bf Case 1: $\mathcal T_{\mathrm{inf}}=\varnothing$.}

In this case,  $\mathcal T=\mathcal T_{\mathrm{fin}}$.  Let
$$
 \varepsilon=\inf_{\tau\in\mathcal T_{\mathrm{fin}}}
 d_\tau((e-\varepsilon_0)_+)-\rho>0
$$
Definition \ref{def:admissible-scale} (3) gives an $n_0$ such that
$$
\sup_{j\geq 1}\tau(p_j)-\tau(p_{n_0})<\varepsilon, \,\,\,\,n\geq n_0,\,\,\tau\in \mathcal T_{\mathrm{fin}}.
$$
Since $\beta_n\to 0$, then there exists $N\geq n_0$ such that
$$
\frac{\alpha_{N+1}}{2}=\frac{\sqrt{\beta_{N+1}}}{2}<\varepsilon_0.
$$
We also have
$$
\sup_{j\geq 1}\tau(p_j)-\tau(p_{n_0})+\rho <d_\tau((e-\varepsilon_0)_+).
$$

Apply Theorem~\ref{thm:square-factorization} for $a_N$ (in place of $a$), $e$ (in place of $b$), $C=1$, $M$ (in place of $J$), 
we obtain $a_N\in M$.  Therefore
$$
 b(1-p_N)=a_N^2\in M,
 \qquad bp_N\in M,
$$
and hence $b\in M$.

{\bf Case 2: $\mathcal T_{\mathrm{inf}}\neq\varnothing$.}

The map $\Theta$ has already been chosen to satisfy Lemma~\ref{lem:choose-Theta}.

Since $M$ is maximal and $b\notin M$, then the ideal generated by $M$ and $b$ must be $A$. Hence, there are $z\in M$, $k\geq1$, and $x_j,y_j\in A^\sim$ $(1\leq j\leq k)$ such that
$$
 b^{1/2}=z+\sum_{j=1}^kx_jby_j.
$$

Set
$
 K=\sum_{j=1}^k\lVert x_j\rVert\lVert y_j\rVert,
$
and choose $r\geq1$ with $2^r\geq k+1$. By Proposition \ref{prop:nu-inequalities}, we have
$$
 \nu_{(k+1)s}^\tau(b^{1/2})
 \leq\nu_s^\tau(z)+K\cdot\nu_s^\tau(b),
 \qquad s>0,\ \tau\in\mathcal T_{\mathrm{inf}}.
$$

Choose $n_0$ as in Remark~\ref{rmk:slow-compare}. Then we have
$$
\frac{ \nu_s^\tau(b)}{2^r} \leq\nu_{(k+1)s}^\tau(b), \quad s\geq \tau(p_{n_0}),\,\,\tau\in\mathcal T_{\mathrm{inf}}.
$$
Proposition \ref{prop:nu-inequalities} (iv) gives
$$
 \frac{\nu_s^\tau(b)^{1/2}}{2^{r/2}}
 \leq\nu_s^\tau(z)+K\cdot \nu_s^\tau(b).
$$
After applying Proposition \ref{prop:uniform-compactness}, then
$$ \sup_{\tau\in \mathcal{T}_{\rm inf}}
  \nu_s^\tau(b)\to 0,\,\,\,\,(s\to +\infty).$$

 Now there exists $s_*\geq\sup_{\tau\in\mathcal T_{\mathrm{inf}}}\tau(p_{n_0})$ such  that
$$
 K\cdot\nu_s^\tau(b)
 \leq\frac{\nu_s^\tau(b)^{1/2}}{2^{1+r/2}},
 \qquad
 s\geq s_*,\ \tau\in\mathcal T_{\mathrm{inf}}.
$$
Now we have
$$
 \frac{\nu_s^\tau(b)^{1/2}}{2^{1+r/2}}
 \leq\nu_s^\tau(z),
 \qquad
 s\geq s_*,\quad\tau\in\mathcal T_{\mathrm{inf}}.
$$
Now we claim that $z\neq 0$. Otherwise, $\nu_s^\tau(b)=0$ for large $s$. This contradicts with Proposition \ref{prop:weighted-diagonal} and the fact that
$$
\tau(p_n)\to \infty,\quad{\rm for\,\,every}\,\,\tau\in\mathcal T_{\mathrm{inf}}.
$$

Then we separate the following two possible situations.

{\bf Case 2a: $\mathcal T_{\mathrm{fin}}=\varnothing$.}

By Definition \ref{def:admissible-scale}(2), we have
$$
\inf_{\tau\in\mathcal T_{\mathrm{inf}}}\tau(p_n)\to \infty.
$$
There exists $N$ such  that
$$
\inf_{\tau\in\mathcal T_{\mathrm{inf}}}\tau(p_N)>s_*,
$$
Then applying Proposition \ref{prop:weighted-diagonal} gives
$$
 \nu_s^\tau(a_N)
 =\nu_{s+\tau(p_N)}^\tau(b^{1/2})
 \leq 2^{1+r/2}\nu_{s+\tau(p_N)}^\tau(z)
 \leq 2^{1+r/2}\nu_s^\tau(|z|),
 \qquad s>0,\ \tau\in\mathcal T_{\mathrm{inf}}.
$$
Now we apply Proposition \ref{prop:semiprime-structure} and  Theorem \ref{thm:square-factorization}, then we have $|z|\in M_+$, and hence, $a_N\in M$.
 As in Case 1, we have
$$
b=bp_N+a_N^2\in M.
$$

{\bf Case 2b: $\mathcal T_{\mathrm{fin}}\neq\varnothing$.}

Since $|z|+e\in M_+$ and $e\leq |z|+e$, we have
$$
 \inf_{\tau\in\mathcal T_{\mathrm{fin}}}
 d_\tau((|z|+e-\varepsilon_0)_+)>\rho.
$$
Put $C=2^{1+r/2}$. There exists $N$ such that
$$
\frac{\sqrt{\beta_{N+1}}}{2C}<\varepsilon_0,
\quad{ }
\inf_{\tau\in\mathcal T_{\mathrm{inf}}}\tau(p_N)>s_*
$$
and
$$
\sup_{j\geq 1}\tau(p_j)-\tau(p_N)+\rho
 <d_\tau((e-\varepsilon_0)_+),\qquad
 \tau\in\mathcal T_{\mathrm{fin}}.
$$
For $\tau\in\mathcal T_{\mathrm{inf}}$, similar calculation gives
$$
 \nu_s^\tau(a_N)
 \leq 2^{1+r/2}\nu_s^\tau(|z|)
 \leq 2^{1+r/2}\nu_s^\tau(|z|+e),
 \quad s>0.
$$
Now we apply  Theorem \ref{thm:square-factorization} for $a_N$ (in place of $a$), $|z|+e$ (in place of $b$), $C=2^{1+r/2}$ and $M$ (in place of $J$), we have $a_N\in M$, and again,
$$
b=bp_N+a_N^2\in M.
$$

All cases lead to a contradiction.  Therefore $A$ has no dense maximal
ideal.
\end{proof}

\begin{corollary}\label{cor:simple-quasitracial-scale}
If a simple $C^*$-algebra admits an admissible quasitracial projection scale,
then it has no maximal ideals.
\end{corollary}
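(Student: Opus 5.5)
The plan is to reduce the corollary to Theorem~\ref{thm:quasitracial-scale} through the dichotomy of Theorem~\ref{thm:dense-closed}. Suppose, for contradiction, that $A$ is simple, admits an admissible quasitracial projection scale, and has a maximal ideal $M$. Theorem~\ref{thm:dense-closed} says $M$ is either dense or norm closed, and we rule out each case in turn.

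For the closed case, note that $M$ is a proper nonzero ideal, so $0\subsetneq M\subsetneq A$. If $M$ were norm closed, it would be a nontrivial closed ideal of $A$, which simplicity forbids. Equivalently, the closure $\overline M$ is a nonzero closed ideal, so simplicity forces $\overline M=A$. Hence $M$ is dense.

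For the dense case, Theorem~\ref{thm:quasitracial-scale} applies directly and says that $A$ has no dense maximal ideal. This contradicts the density of $M$ just established.

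No step here is a genuine obstacle; the substantive work is contained in Theorem~\ref{thm:quasitracial-scale}. Two small points need checking.

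First, the hypotheses of Theorem~\ref{thm:quasitracial-scale} must match those assumed in the corollary. They do: the standing conventions of Section~\ref{sec:factorization} (a nonunital algebra with an increasing projection approximate unit $(p_n)$ and $q_n\neq0$) are built into the notion of an admissible scale. In the simple setting, the faithfulness required in Definition~\ref{def:projection-scale} is automatic for nonzero lower semicontinuous quasitraces.

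Second, the case $M=0$ must be excluded. This is handled by the paper's convention that a proper ideal is nonzero.
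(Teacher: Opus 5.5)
Your proposal is correct and matches the paper's one-line argument: a maximal ideal $M$ is nonzero, so $\overline{M}$ is a nonzero closed ideal and simplicity forces $\overline{M}=A$, after which Theorem~\ref{thm:quasitracial-scale} excludes dense maximal ideals. Invoking the dichotomy of Theorem~\ref{thm:dense-closed} is harmless but not needed, since density already follows from simplicity alone.
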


\begin{proof}
A maximal ideal in a simple $C^*$-algebra is dense, so
Theorem~\ref{thm:quasitracial-scale} applies.
\end{proof}

\begin{corollary}[Nonsimple form]\label{cor:nonsimple-quasitracial-scale}
Suppose that $A$ admits an admissible quasitracial projection scale.  If no
nonzero proper closed ideal
$J\lhd A$ has algebraically simple quotient $A/J$, then $A$ has
no maximal ideals.
\end{corollary}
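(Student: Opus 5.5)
The plan is to reduce the statement to the dichotomy of Theorem~\ref{thm:dense-closed} and rule out each alternative separately. Suppose, towards a contradiction, that $M$ is a maximal ideal of $A$. By Theorem~\ref{thm:dense-closed}, exactly one of two situations occurs: either $M$ is dense (with $\Ped(A)\subseteq M$), or $M$ is norm closed and $A/M$ is algebraically simple.

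First, the dense case is excluded directly by Theorem~\ref{thm:quasitracial-scale}. Indeed, $A$ admits an admissible quasitracial projection scale by hypothesis, so $A$ has no dense maximal ideal.

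Second, consider the closed case. Here $M$ is a closed ideal and, being a maximal ideal in the paper's convention, satisfies $0\subsetneq M\subsetneq A$. So $M$ is a nonzero proper closed ideal, and $A/M$ is algebraically simple. This directly contradicts the standing hypothesis that no nonzero proper closed ideal $J\lhd A$ has algebraically simple quotient. Hence no maximal ideal exists. Equivalently, this is Corollary~\ref{cor:no-max-exact}: condition~(ii) is the hypothesis, and condition~(i) follows from Theorem~\ref{thm:quasitracial-scale} combined with the lifting statement in Theorem~\ref{thm:dense-closed}. Any nonzero algebraically simple quotient of $\Sigma(A)$ would lift to a dense maximal ideal of $A$, which cannot exist.

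There is essentially no technical obstacle, since all the analytic work is already in Theorem~\ref{thm:quasitracial-scale}. The only point to check is a convention. One should confirm that a closed maximal ideal cannot be the zero ideal, since the paper defines proper ideals as nonzero. If one instead allowed $M=0$, the case where $A$ itself is algebraically simple would need separate handling. That case is impossible here anyway: $A$ is nonunital with a projection approximate unit and $q_n\neq0$, so $\Ped(A)$, which is the ideal generated by the $p_n$, would equal $A$ only if some $p_n$ were a unit. Noting this disposes of the edge case.
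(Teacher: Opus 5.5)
Your argument is the paper's own: Theorem~\ref{thm:quasitracial-scale} excludes the dense alternative of Theorem~\ref{thm:dense-closed}, and the hypothesis excludes the closed alternative. Under the paper's convention a maximal ideal is automatically nonzero, so this already completes the proof.

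Your closing edge-case paragraph, however, is false and should be dropped. The ideal generated by the $p_n$ is $\bigcup_n\operatorname{span}(Ap_nA)$, not $\bigcup_n p_nAp_n$, and it can equal $A$ without any $p_n$ being a unit; see Theorem~\ref{thm:properly-infinite-projection}.

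This failure persists even under the scale hypothesis. Take a sequence of orthogonal nonzero projections $q_n$ in the CAR algebra, and let $A$ be the nonunital hereditary subalgebra generated by $\sum_n 2^{-n}q_n$. This $A$ admits a scale with $\mathcal T_{\inf}=\varnothing$: the rescaled trace is bounded, and one can take $\rho=0$, $e=p_1$, $\varepsilon_0=1/2$. Yet $A$ is algebraically simple, by Lemma~\ref{lem:packing-pedersen} applied with $P=p_1\otimes 1_m$ for large $m$.

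So under the alternative convention $0$ would be a maximal ideal there and the statement would fail. The nonzero convention is therefore essential, not a removable formality.
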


\begin{proof}
Theorem~\ref{thm:quasitracial-scale} excludes the dense alternative in
Theorem~\ref{thm:dense-closed}; the stated hypothesis excludes the closed
alternative.
\end{proof}

\section{Main applications}\label{sec:stable}

\subsection{Stablization of unital simple algebras}

\begin{definition}[\cite{TomsFlat}]\label{def:strict-comparison}\label{def:comparison-radii}
Let $A$ be unital, simple, and stably finite, with
$\QT^1_2(A)\ne\varnothing$.  For $r\geq0$, we say that $A$ has
\emph{$r$-comparison} if
$$
 d_\tau(a)+r<d_\tau(b)\quad(\tau\in QT^1_2(A))
 \quad\Longrightarrow\quad a\precsim b
$$
for all $a,b\in M_\infty(A)_+$.  The \emph{radius of comparison} is
$$
 \operatorname{rc}(A)
 =\inf\{r\geq0:A\text{ has }r\text{-comparison}\}.
$$
We say that $A$ has \emph{strict comparison} if it has
$0$-comparison.  Equivalently, the strict dimension inequality implies
comparison for positive elements of $A\otimes\K$.
\end{definition}

The infimum of an empty set of comparison buffers is $\infty$.

\begin{definition}\label{def:restricted-radius}
For $A$ as above, the \emph{restricted projection radius}
$\operatorname{rc}_{\mathrm p}(A)$ is the infimum of the numbers
$r\geq0$ such that
$$
 d_\tau(q)+r<d_\tau(b)\quad(\tau\in \QT^1_2(A))
 \quad\Longrightarrow\quad q\precsim b
$$
for every projection $q\in M_\infty(A)$ and every
$b\in M_\infty(A)_+$.  Only the element on the left is required to
be a projection.
\end{definition}

It follows directly that
$$
 0\leq\operatorname{rc}_{\mathrm p}(D)
 \leq\operatorname{rc}(D)\leq\infty.
$$
The restricted radius supplies exactly the comparison of $q_n$ with
positive elements required in Definition~\ref{def:admissible-scale}.

Using Cuntz-semigroup formulation, it follows from \cite[Section~3]{BRTTW} that the definition of radius of comparison is extended to non-unital $C^*$-algebras. Let 
$A$ be a $C^*$-algebra. We denote by $\QT_2(A)$ the collection of all extended lower semicontinuous $2$-quasitrace on $A$.  It follows from \cite[Proposition 4.2]{EHS} that the map  from $\QT_2(A)$ to $F(\Cu(A))$ defined by $\tau\mapsto d_\tau$, is bijective, in which $F(\Cu(A))$ denotes the set of all functionals on $\Cu(A)$. Thus, one obtains the following definition.

\begin{definition}(\cite[Definition 3.3.2]{BRTTW})
    Let $A$ be a $C^*$-algebra and $e\in (A\otimes \K)_+$ full. The radius of comparison of $A$ relative to $[e]$ is defined to be the infimum of the numbers $r>0$ satisfying
    \
$$ d_\tau([a])+rd_\tau([e])\leq d_\tau([b])\quad(\tau\in \QT_2(A))
 \quad\Longrightarrow\quad a\precsim b.$$
 We denote this number by $\rm rc(\Cu(A), [e])$. Similarly, one may define $\rm rc_p(\Cu(A), [e])$.
\end{definition}

\begin{remark}\label{rem:radius comparison coincide}
 It is shown in \cite[Proposition 3.2.3]{BRTTW} that for a unital $C^*$-algebra $A$ that all the quotients are stably finite (e.g., $A$ is unital, stably finite and simple), the radius of comparison $\rm rc(A)$ in the sense of Definition \ref{def:comparison-radii} is equivalent to $\rm rc(\Cu(A), [1_A])$. Similarly, one also has $\rm rc_p(A)=\rm rc_p(\Cu(A), [1_A])$.
\end{remark}

\begin{definition}
    Let $A$ be a $C^*$-algebra and $[e]\in \Cu(A)$. We denote by $\QT_2(A, [e])$ the set
    $$\QT^1_2(A, [e])=\{\tau\in \QT_2(A): d_\tau([e])=1\}.$$
\end{definition}

\begin{remark}\label{rem:Cuntz stable isomorphism}
    Let $A$ be a $\sigma$-unital simple $C^*$-algebra and $p\in A$ be a non-zero projection. For the full hereditary $C^*$-algebra $B=pAp$, it follows from the Brown stable isomorphism in \cite[Theorem 2.8]{BrownStable} that $B\otimes \K\simeq A\otimes \K$ and $\Cu(B)\simeq \Cu(A)$ via an isomorphism $\Phi$ with $\Phi([1_B]_B)=[p]_A$. Moreover, it is not hard to see $\QT^1_2(A, [p])$ can be identified by   $\QT^1_2(B)$ by the natural extension.  
\end{remark}

\begin{proposition}\label{lem:corner-radius}
    Let $A$ be a simple, stably finite and $\sigma$-unital $C^*$-algebra. Let $q\in A$ be a non-zero projection. Define $B=qAq$. Then $\rm rc(B)=\rm rc(\Cu(A), [q])$ and $\rm rc_p(B)=\rm rc_p(\Cu(A), [q])$. In addition, 
    for any $\rho>\rm rc_p(B)$, any projection $e$ and positive element $c$ in $A\otimes \K$, one has
    $$ 
 d_\tau(e)+\rho\leq d_\tau(c)\quad(\tau\in \QT^1_2(A, [q]))
 \quad\Longrightarrow\quad e\precsim c.
$$
\end{proposition}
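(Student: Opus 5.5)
The plan is to transport everything through the Brown stable isomorphism of Remark~\ref{rem:Cuntz stable isomorphism}. Since $A$ is simple and $\sigma$-unital and $q\neq0$, the corner $B=qAq$ is full and hereditary. Hence $B\otimes\K\cong A\otimes\K$, and there is an ordered monoid isomorphism $\Phi:\Cu(B)\to\Cu(A)$ with $\Phi([1_B])=[q]$.

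First, I will record that $\Phi$ is compatible with functionals. Composition with $\Phi$ gives a bijection $F(\Cu(A))\to F(\Cu(B))$. By \cite[Proposition 4.2]{EHS}, this is a bijection $\QT_2(A)\to\QT_2(B)$, $\tau\mapsto\tau|_B$ (extended to $B\otimes\K$), with $d_{\tau|_B}(x)=d_\tau(\Phi(x))$. Under it, $\QT_2^1(A,[q])$ corresponds to $\{\sigma\in\QT_2(B): d_\sigma([1_B])=1\}=\QT_2^1(B)$. Here I use that a functional normalized at the unit class of a unital algebra is a bounded normalized $2$-quasitrace.

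Second, $\operatorname{rc}$ and $\operatorname{rc}_p$ as defined in \cite{BRTTW} are invariants of the pair $(\Cu,[e])$ under ordered isomorphisms. The defining condition transports verbatim: $d_\tau([a])+r\,d_\tau([e])\le d_\tau([b])$ for all functionals, implying $[a]\le[b]$. For $\operatorname{rc}_p$, I also need that $\Phi$ maps classes of projections onto classes of projections. This holds because $\Phi$ is induced by a $*$-isomorphism $B\otimes\K\cong A\otimes\K$, and compact (projection) classes in the stably finite simple case are determined order-theoretically. Simplest is to note that the isomorphism is implemented at the algebra level, so projections go to projections.

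This gives $\operatorname{rc}(\Cu(A),[q])=\operatorname{rc}(\Cu(B),[1_B])$, and likewise for $\operatorname{rc}_p$. Then Remark~\ref{rem:radius comparison coincide} applies, because $B$ is unital, simple and stably finite, so all its quotients are stably finite. It identifies the right-hand sides with $\operatorname{rc}(B)$ and $\operatorname{rc}_p(B)$.

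Third, the comparison statement. Let $\rho>\operatorname{rc}_p(B)$, and pick $r$ with $\operatorname{rc}_p(B)<r<\rho$. Then $r$-comparison for projections holds in $B$. Since the \cite{BRTTW} definition uses a non-strict inequality $\le$ while Definition~\ref{def:restricted-radius} uses a strict one, choosing $r$ strictly between absorbs this difference.

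Now suppose $d_\tau(e)+\rho\le d_\tau(c)$ for all $\tau\in\QT^1_2(A,[q])$, with $e$ a projection and $c\ge0$ in $A\otimes\K$. Pull back via $\Phi^{-1}$ to a projection $e'$ and a positive element $c'$ in $B\otimes\K$. For every $\sigma\in\QT_2^1(B)$ this gives $d_\sigma(e')+r<d_\sigma(c')$, where I use $d_\sigma(e')<\infty$ because $e'$ is a projection in $M_\infty(B)$ up to equivalence. Hence $e'\precsim c'$, and applying $\Phi$ gives $e\precsim c$.

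One subtlety is that $c'$ may lie in $B\otimes\K$ rather than $M_\infty(B)$. I expect this to be the main obstacle: reducing a positive element of $B\otimes\K$ to $M_\infty(B)$. I would handle it by noting that $e'\precsim(c'-\varepsilon)_+$ suffices by compactness of $[e']$. For small $\varepsilon$, $(c'-\varepsilon)_+$ is Cuntz-equivalent to an element of $M_n(B)$ (cut down by a large unit $1_{M_n(B)}$ and use Lemma~\ref{R lem}(ii)). Lower semicontinuity, $d_\sigma((c'-\varepsilon)_+)\uparrow d_\sigma(c')$, together with compactness of $\QT_2^1(B)$ and continuity of $\sigma\mapsto d_\sigma(e')$, gives a uniform $\varepsilon$ with the strict inequality retained.
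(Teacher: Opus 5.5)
Your proposal is correct. Your first two steps are exactly the paper's proof of the two equalities: Brown's stable isomorphism, the correspondence $\QT^1_2(A,[q])\leftrightarrow\QT^1_2(B)$, invariance of the relative radii under $\Phi$, and Remark~\ref{rem:radius comparison coincide}. The difference is in the final comparison claim.

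The paper stays in $\Cu(A)$. Since $\rho>\operatorname{rc}_{\mathrm p}(B)=\operatorname{rc}_{\mathrm p}(\Cu(A),[q])$, the non-strict condition $d_\tau(e)+\rho\,d_\tau(q)\le d_\tau(c)$ for all $\tau\in\QT_2(A)$ already gives $e\precsim c$. The paper then uses simplicity and compactness of $[q]$ to check two facts. Quasitraces with $\tau(q)=0$ vanish identically, and those with $\tau(q)=\infty$ are infinite on all nonzero classes. Hence that condition is equivalent to the normalized one over $\QT^1_2(A,[q])$.

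You instead pull $e,c$ back to $B\otimes\K$ and apply Definition~\ref{def:restricted-radius} for the unital algebra $B$ directly. This needs no case analysis, but it costs you two adjustments:
\begin{enumerate}
\item an intermediate $r$, using $d_\sigma(e')<\infty$, to turn $\le$ into $<$;
\item a Dini-type compactness argument on $\QT^1_2(B)$, to replace $c'\in B\otimes\K$ by an element of $M_n(B)$.
\end{enumerate}

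The paper's route is shorter and handles arbitrary $c\in A\otimes\K$ at no extra cost. For the final claim, however, it relies on the $\operatorname{rc}_{\mathrm p}$-version of Remark~\ref{rem:radius comparison coincide}, which the paper asserts only by analogy. Your route derives the final claim, which is the only part used later, from Definition~\ref{def:restricted-radius} and the identification $\QT^1_2(A,[q])\cong\QT^1_2(B)$ alone.

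One small correction. Lemma~\ref{R lem}(ii) gives only $(c'-\varepsilon)_+\precsim E_nc'E_n$ with $E_n=1_{M_n(B)}$, not Cuntz equivalence. This is harmless: $E_nc'E_n\precsim c'$ and $d_\sigma(E_nc'E_n)\ge d_\sigma((c'-\varepsilon)_+)$. So you may apply restricted comparison directly to $E_nc'E_n\in M_n(B)_+$, which is all you need.
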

\begin{proof}
First, it follows from Remark \ref{rem:Cuntz stable isomorphism} that $F(\Cu(A))=F(\Cu(B))$ induced by an isomorphism $\Phi: \Cu(B)\to \Cu(A)$ such that $\Phi([1_B])=[q]$. This implies that 
\[\rm rc(\Cu(A), [q])=\rm rc(\Cu, [1_B])=\rm rc(B)\]
and 
\[\rm rc_p(\Cu(A), [q])=\rm rc_p(\Cu, [1_B])=\rm rc_p(B)\]
by Remark \ref{rem:radius comparison coincide}.

For the last claim, we first shows that in the definition of $\rm rc(\Cu(A), [q])$, it suffices to look at all functionals $d_\tau\in F(\Cu(A))$ where $\tau\in \QT^1_2(A, [q])$. Indeed, for $\tau\in \QT_2(A)$, if $d_\tau([q])=\tau(q)=0$. Then for any $[a]\in \Cu(A)$, since $A$ is simple, for any $\varepsilon>0$, there exists $n\in \mathbb{N}$ such that $[(a-\varepsilon)_+]\leq n[q]$. This implies that $d_\tau([(a-\varepsilon)_+])=0$. Therefore, one has $d_\tau([a])=0$. If $d_\tau([q])=\tau(q)=\infty$, then because   $[q]$ is a compact element in $\Cu(A)$ and $A$ is simple, for any non-zero $[a]\in \Cu(A)$, there exists an $n\in \mathbb{N}$ such that $[q]\leq n[a]$. This implies that $d_\tau([a])=\infty$ for any non-zero $[a]\in \Cu(A)$. In this two cases, the inequality
$$d_\tau([a])+rd_\tau([q])\leq d_\tau([b])$$
always hold.  Then in the case $0<\tau(q)<\infty$, we define $\bar{\tau}(\cdot)=\tau(\cdot)/\tau(q)\in \QT_2^1(A, [q])$. This thus implies that the two conditions
\begin{equation}
    d_\tau([a])+rd_\tau([q])\leq d_\tau([b])\quad(\tau\in \QT_2(A))
\end{equation}
and 
\begin{equation}
    d_\tau([a])+r\leq d_\tau([b])\quad(\tau\in \QT^1_2(A, [q]))
\end{equation}
are equivalent. Thus, the final claim is established.
\end{proof}

\begin{remark}\label{rem: extreme point enough}
In the setting of Proposition \ref{lem:corner-radius}, in order to verify  $d_\tau(e)+\rho\leq d_\tau(c)$ for all $\tau\in \QT^1_2(A, [q])$, it suffices to verify the inequality for all $\tau\in \partial_e \QT_2^1(A, [q])$. This is a standard application of the Choquet simplex theory. Nevertheless, we prefer to provide a brief explanation to our $\rm rc_p(A, [q])$ case. Indeed, note that $f:\QT_2^1(A)\to (-\infty, +\infty]$ defined by $f(\tau)=d_\tau(c)-d_\tau(e)-\rho$ is lower semicontinuous and affine because $d_\tau(e)=\tau(e)$. Moreover, since $A\otimes \K$ is simple, one has $[e]\leq n[q]$ for some $n\in \mathbb{N}$. This implies that $f$ is bounded below. If f attains a negative value on $\QT_2^1(A, [q])$, then the set of all the minimizers forms a non-empty compact face whose extreme points also belong to $\partial_e \QT_2^1(A, [q])$. But this is a contradiction.
\end{remark}

The following is the first main application of Corollary~\ref{cor:simple-quasitracial-scale}.

\begin{theorem}\label{thm:stable-finite}
Let $D$ be unital, simple, and stably finite, with
$\QT^1_2(D)\ne\varnothing$.  If
$\operatorname{rc}_{\mathrm p}(D)<\infty$, then $D\otimes\K$ has no
maximal ideals and its Pedersen ideal is proper.  In particular, this
holds when $\operatorname{rc}(D)<\infty$, hence when $D$ has strict
comparison.
\end{theorem}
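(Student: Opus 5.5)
We will apply Corollary~\ref{cor:simple-quasitracial-scale} to $A=D\otimes\K$, which is simple (as $D$ is) and nonunital. Take the projection approximate unit $p_n=1_D\otimes e_n$, where $e_n=\sum_{i\le n}e_{ii}$ is the standard rank-$n$ projection in $\K$. Then $q_n=p_n-p_{n-1}=1_D\otimes e_{nn}\neq0$, and all $q_n$ are Murray--von Neumann equivalent to $1_D\otimes e_{11}=p_1$. For $\mathcal T$ we take the canonical extensions $\tau\otimes\mathrm{Tr}$ of the normalized $2$-quasitraces $\tau\in\QT^1_2(D)$. These are lower semicontinuous $2$-quasitraces on $A$, and they are faithful by simplicity. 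They are densely finite because $\Ped(A)$ is the ideal generated by $p_1$, on which they are finite. They are normalized by $d_\tau(p_1)=\tau(1_D)=1$.

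Since $\tau(p_n)=n$ for every $\tau\in\mathcal T$, we have $\mathcal T_{\mathrm{fin}}=\varnothing$ and $\mathcal T=\mathcal T_{\mathrm{inf}}$. Condition~(2) of Definition~\ref{def:admissible-scale} is then immediate: $\sup_\tau\tau(p_n)=n<\infty$ and $\inf_\tau\tau(p_n)=n\to\infty$. Condition~(3) is vacuous.

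For condition~(1), fix $\rho$ with $\operatorname{rc}_{\mathrm p}(D)<\rho<\infty$. Definition~\ref{def:restricted-radius} is phrased for projections in $M_\infty(D)$ and positive elements of $M_\infty(D)$, while condition~(1) needs $c\in A_+=(D\otimes\K)_+$. This mismatch is the main obstacle. The cleanest route is Proposition~\ref{lem:corner-radius} with $A=D\otimes\K$ and $q=p_1$, so that $B=p_1Ap_1\cong D$. It gives, for every $\rho>\operatorname{rc}_{\mathrm p}(D)$ and every projection $e$ and positive $c$ in $A\otimes\K$,
\[
d_\tau(e)+\rho\le d_\tau(c)\ \ \text{for all } \tau\in\QT^1_2(A,[p_1]) \quad\Longrightarrow\quad e\precsim c .
\]
By Remark~\ref{rem:Cuntz stable isomorphism}, $\QT^1_2(A,[p_1])$ is identified with $\QT^1_2(D)$, that is, with our $\mathcal T$. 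So the strict inequality $d_\tau(q_n)+\rho<d_\tau(c)$ for all $\tau\in\mathcal T$ yields $q_n\precsim c$ in $A\otimes\K$. Finally, Cuntz subequivalence between elements of $A$ computed in $A\otimes\K$ agrees with that computed in $A$, since $A\otimes\K\cong A$ and $A$ sits as a corner. Hence condition~(1) holds.

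Alternatively, one can argue directly: approximate $c$ by $(c-\varepsilon)_+$, which is Cuntz below an element of $M_m(D)$ by compressing with $p_m$ and using Lemma~\ref{R lem}. By lower semicontinuity of $d_\tau$ and compactness of $\QT^1_2(D)$, the strict inequality persists for some $\varepsilon$ and $m$. Applying $\rho$-comparison in $M_\infty(D)$ then gives $q_n\precsim(c-\varepsilon)_+\le c$. Either way, $A$ admits an admissible quasitracial projection scale, and Corollary~\ref{cor:simple-quasitracial-scale} shows that $D\otimes\K$ has no maximal ideals.

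The Pedersen ideal is proper: every element of $\Ped(A)$ lies in the algebraic ideal generated by $p_1$, and each element of that ideal has finite $d_\tau$ for $\tau\in\mathcal T$. The element $\sum_n n^{-1}q_n\in A_+$ has $d_\tau=\infty$, so it is not in $\Ped(A)$.

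The last sentence of the theorem follows from $\operatorname{rc}_{\mathrm p}(D)\le\operatorname{rc}(D)$, together with the fact that strict comparison means $\operatorname{rc}(D)=0$.
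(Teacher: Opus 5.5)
Your proposal is correct and follows the paper's proof essentially step for step. It uses the same scale $q_n=1_D\otimes e_{nn}$ with $\mathcal T\cong\QT^1_2(D)$, and $\tau(p_n)=n$ gives $\mathcal T_{\mathrm{fin}}=\varnothing$ and condition~(2); condition~(1) comes from Proposition~\ref{lem:corner-radius}, and properness of $\Ped(A)$ comes from a diagonal element with infinite $d_\tau$. One small slip in your optional direct argument: $\rho$-comparison in $M_\infty(D)$ must be applied to the compression $p_mcp_m$ (which satisfies $p_mcp_m\precsim c$), not to $(c-\varepsilon)_+$, since the latter does not lie in $M_\infty(D)$.
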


\begin{proof}
Put $A=D\otimes\K$ and use
\[q_n=1_D\otimes e_{nn},\qquad p_n=\sum_{j=1}^nq_j,
 \qquad\mathcal T=\QT_2^1(A, [q_1])\]
 that can be identified by $\QT_2^1(D)$ by Remark \ref{rem:Cuntz stable isomorphism}.
Then $\tau(p_1)=1$, $d_\tau(q_n)=1$, and $\tau(p_n)=n$ for all
$\tau\in\mathcal T$.  Hence $\mathcal T_{\mathrm{fin}}$ is empty,
and
\begin{equation*}
 \sup_{\tau\in\mathcal T}\tau(p_n)=n<\infty,
 \qquad \inf_{\tau\in\mathcal T}\tau(p_n)=n\longrightarrow\infty.
\end{equation*}
Choose $\rho>\operatorname{rc}_{\mathrm p}(D)$.
Note that Lemma~\ref{lem:corner-radius} verifies 
Definition~\ref{def:admissible-scale}\ref{key 1}; the displayed formulas above verify Definition~\ref{def:admissible-scale}\ref{key 2}. Finally, since $\mathcal{T}_{\rm fin}$ is empty, there is no need to verify Definition~\ref{def:admissible-scale}\ref{key 3}. Now
Corollary~\ref{cor:simple-quasitracial-scale} proves the first assertion.

For properness of the Pedersen ideal, let
$h=\sum_{n\geq1}2^{-n}q_n$ and fix $\tau\in\mathcal T$.
The dimension-finite ideal $\mathcal D_\tau(A)$ of
Definition~\ref{def:dimension-finite} contains every finite matrix
corner, so it is dense and contains $\Ped(A)$ by
Theorem~\ref{thm:dense-maximal}.
But $d_\tau(h)=\sup_n\tau(p_n)=\infty$, so
$h\notin\mathcal D_\tau(A)$ and $h\notin\Ped(A)$.
\end{proof}

We end this section by demonstrating that in many cases,  $\rm rc_p(A)$ and $\rm rc(A)$ are actually equal.

\begin{proposition}\label{prop:radii-equality}
Let $A$ be a unital simple and stably finite $C^*$-algebra with $\QT_2^1(A)\neq \varnothing$ as in Definition~\ref{def:comparison-radii}. Suppose $A$ has real rank zero, or $\Cu(A)$ has weak cancellation in the sense that  $x+z\ll y+z$ implies $x\ll y$ in $\Cu(A)$ (see e.g., \cite[Definition 10.1]{GardellaPerera}). Then one has
\[ \operatorname{rc}_{\mathrm p}(A)=\operatorname{rc}(A).\]
In particular, the equality holds when $A$ has stable rank one.
\end{proposition}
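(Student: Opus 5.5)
We must show $\operatorname{rc}(A)\le\operatorname{rc}_{\mathrm p}(A)$. The reverse inequality $\operatorname{rc}_{\mathrm p}(A)\le\operatorname{rc}(A)$ holds trivially, since a projection is a positive element.

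So fix $r>\operatorname{rc}_{\mathrm p}(A)$, and let $a,b\in M_\infty(A)_+$ satisfy $d_\tau(a)+r<d_\tau(b)$ for all $\tau\in\QT^1_2(A)$. We want $a\precsim b$. By Lemma~\ref{R lem}(i) it suffices to prove $(a-\varepsilon)_+\precsim b$ for every $\varepsilon>0$. We shall also use $d_\tau((a-\varepsilon)_+)\le d_\tau(a)$.

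\textbf{Real rank zero case.} Here $M_n(A)$ also has real rank zero, so the hereditary subalgebra $\overline{aM_n(A)a}$ has an approximate unit of projections. Pick a projection $q$ in this hereditary subalgebra with $\|qa q - a\|$ small, arranged so that
\[
(a-\varepsilon)_+\precsim q\precsim a.
\]
This is the standard fact that in real rank zero every $[(a-\varepsilon)_+]$ sits below the class of a projection $q$ in the hereditary algebra of $a$. Since $q\precsim a$, we get $d_\tau(q)\le d_\tau(a)$, hence $d_\tau(q)+r<d_\tau(b)$ for all $\tau$. The definition of $\operatorname{rc}_{\mathrm p}$ then gives $q\precsim b$, and therefore $(a-\varepsilon)_+\precsim b$. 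The key step is producing $q$: take $q$ from an approximate unit of projections with $\|(a-\varepsilon/2)_+-q(a-\varepsilon/2)_+q\|<\varepsilon/2$, so that $(a-\varepsilon)_+\precsim q(a-\varepsilon/2)_+q\precsim q$ by Lemma~\ref{R lem}(ii).

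\textbf{Weak cancellation case.} Use the unit as a compact buffer: take $e=1\otimes 1_k$, a projection. The idea is to move $a$ to the projection side by adding a projection to $b$ and using cancellation. Take $x=[(a-\varepsilon)_+]$, which satisfies $x\ll[a]$. I would look for a projection $p$ with
\[
x+[p]\le [\text{projection}]
\]
and then compare that projection with $b+p$. Concretely, since $A$ is simple and unital, $x\le m[1]$ for some $m$. Write $[(a-\varepsilon)_+]+[c]=m[1]$ approximately, where $c$ is a complement. In general such a $c$ exists only up to $\ll$, and this is where I expect the main obstacle.

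An alternative route for this case: apply $r$-comparison for projections to the projection $m\cdot 1$ against $b\oplus c'$ for a suitable $c'$ with $[a]+[c']\le m[1]$, getting $m[1]\le [b]+[c']$. Combining,
\[
x+[c']\ll [a]+[c']\le m[1]\le [b]+[c'],
\]
and weak cancellation would then yield $x\ll[b]$. Finding $c'$ with $d_\tau(c')=m-d_\tau(a)$ is the hard part. I would try to obtain it via $c'=(1_m-g)$ for a positive contraction $g$ in the hereditary algebra of $a$ acting as a unit on $(a-\varepsilon/2)_+$, accepting losses of $\varepsilon$ which are harmless because the inequality is strict. In that case $d_\tau(1_m-g)+d_\tau(g)\ge m$, and one only needs an inequality in the correct direction, so this should go through.

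Finally, stable rank one implies weak cancellation of $\Cu(A)$ by a result of Rørdam--Winter, which gives the last assertion.
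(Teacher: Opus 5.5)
Your real rank zero argument is the same as the paper's. In the weak-cancellation case, your eventual choice $c'=1_m-g$, with $g=f(a)$ and $f=1$ on $[\varepsilon/2,\infty)$, is (up to the cutoff) exactly the paper's complement $c=(1_n-a/\varepsilon)_+$. The inequality $[1_m]\le[g]+[1_m-g]\le[a]+[c']$ that you identify is also the one the paper feeds into restricted comparison. However, the chain you display does not close as written, and the step that makes it close is missing.

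\textbf{What fails in the displayed chain.}
\begin{enumerate}[label=(\alph*)]
\item The link $[a]+[c']\le m[1]$ fails for this $c'$. For instance, if $a$ is a non-invertible strictly positive element of $M_m(A)$, then $d_\tau(a)=m$ while $d_\tau(1_m-g)>0$.
\item Even where that link holds, it only bounds $d_\tau(c')$ from above. Applying restricted comparison to $1_m$ versus $b\oplus c'$ needs the lower bound $d_\tau(c')\ge m-d_\tau(a)$.
\item The step $x+[c']\ll[a]+[c']$ does not follow from $x\ll[a]$, since $[c']$ need not be compact.
\end{enumerate}

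\textbf{How to close it.} You need two separate facts.
\begin{enumerate}[label=(\arabic*)]
\item Since $g$ is a unit for $(a-\varepsilon/2)_+$, the element $1_m-g$ is orthogonal to $(a-\varepsilon)_+$. Hence $x+[c']=[(a-\varepsilon)_++c']\le[1_m]$. This is what the unit property of $g$ is for, and you never state it.
\item Since $d_\tau(c')\le m<\infty$, you have $m+r\le d_\tau(a)+d_\tau(c')+r<d_\tau(b)+d_\tau(c')$. Restricted comparison applied to the projection $1_m$ and $b\oplus c'$ then gives $[1_m]\le[b]+[c']$.
\end{enumerate}
Compactness of $[1_m]$ now yields
\[
x+[c']\le[1_m]\ll[1_m]\le[b]+[c'],
\]
and weak cancellation gives $x\ll[b]$. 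This is precisely the paper's chain $[a_\varepsilon]+[c]\le[1_n]\ll[1_n]\le[b]+[c]$.

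Note also that in the correct argument no $\varepsilon$-loss enters the trace inequality. It appears only through $x=[(a-\varepsilon)_+]$ and is removed by Lemma~\ref{R lem}(i). This matters because a pointwise strict inequality over $\QT_2^1(A)$ gives no uniform room to absorb $\tau$-dependent errors. So your stated justification, that the losses are harmless because the inequality is strict, would not by itself suffice.
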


\begin{proof}
It suffices to show $\operatorname{rc}(A)\leq\operatorname{rc}_{\mathrm p}(A)$.  Suppose the $\rm rc_p(A)$ is finite. Fix
$r>\operatorname{rc}_{\mathrm p}(D)$, and let $a,b\in M_n(A)_+$
satisfy $d_\tau(a)+r<d_\tau(b)$ for every normalized quasitrace.

If $A$ has real rank zero, the hereditary subalgebra generated by $a$, denoted by $(M_n(A))_a$, has an approximate identity
of projections by \cite[Theorem 2.6]{BrownPedersen}.  Given $\varepsilon>0$, choose
one of these projections $q$ with $\|a-qaq\|<\varepsilon$.
Lemma~\ref{R lem} gives
$$
 (a-\varepsilon)_+\precsim qaq\precsim q\precsim a.
$$
Thus $d_\tau(q)+r<d_\tau(b)$, and restricted comparison gives
$q\precsim b$.  It follows that $(a-\varepsilon)_+\precsim b$
for every $\varepsilon>0$, whence $a\precsim b$ by
Lemma~\ref{R lem}.

Now suppose that $\Cu(A)$ has weak cancellation. Without loss of generality, one may assume that $a$ is
contraction. Denote by $1_n$ the unit for $M_n(A)$. For $0<\varepsilon<1$, put
$$
 a_\varepsilon=(a-\varepsilon)_+,
 \qquad c=(1_n-a/\varepsilon)_+.
$$
The elements $a_\varepsilon$ and $c$ are orthogonal.  The scalar
function $t+(1-t/\varepsilon)_+$ is strictly positive on $[0,1]$,
so $a+c$ is invertible in $M_n(A)$.  Therefore
$$
 [a_\varepsilon]+[c]=[a_\varepsilon+c]\leq[1_n]
 =[a+c]\leq[a]+[c].
$$
Applying $d_\tau$ gives
$d_\tau(1_n)+r<d_\tau(b)+d_\tau(c)$, which  gives $[1_n]\leq[b]+[c]$.
Since $[1_n]\ll[1_n]$, the preceding inequalities imply
$$
 [a_\varepsilon]+[c]\ll[b]+[c].
$$
Weak cancellation yields $[a_\varepsilon]\ll[b]$, hence
$(a-\varepsilon)_+\precsim b$.  Lemma \ref{R lem}
gives $a\precsim b$.  In both cases $A$ has $r$-comparison for every
$r>\operatorname{rc}_{\mathrm p}(A)$, which proves equality of the
radii.  The stable rank one assertion follows from
\cite[Theorem~10.3]{GardellaPerera}, which is originally due to R\o rdam and Winter \cite{RW}.
\end{proof}

\subsection{$\mathcal{Z}$-stability and comparison}
Our principal application in this subsection is
Theorem~\ref{thm:finite-radius-scale} and Theorem \ref{cor:z-stable-projectional}.  We first give the finite
matrix and scale arguments used in the proof.

\begin{lemma}\label{lem:finite-matrix-descent}
If $M_m(A)$ has no maximal ideals for some $m\geq1$, then $A$ has no
maximal ideals.
\end{lemma}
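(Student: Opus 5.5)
The plan is to prove the contrapositive: starting from a maximal ideal $M$ of $A$, we build a maximal ideal of $M_m(A)$. Set $M_m(M)=\{(x_{ij}):x_{ij}\in M\}$. This is a two-sided algebraic ideal of $M_m(A)$, since matrix multiplication only forms sums of products $a x$ and $x a$ with $x\in M$, $a\in A$. It is nonzero because $M\neq 0$. It is proper because $M\neq A$. We then show that $M_m(M)$ is maximal.

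The key step is the standard correspondence between ideals of $M_m(A)$ and ideals of $A$. Let $I\lhd M_m(A)$ be an ideal. Define $I_0$ to be the set of all entries occurring in elements of $I$. We want to show $I_0\lhd A$ and $I=M_m(I_0)$. For a nonunital $A$ we cannot use the matrix units $e_{ij}$ directly, since they do not lie in $M_m(A)$. Instead we use the multiplier algebra. Here $e_{ij}\in M(M_m(A))=M_m(M(A))$, and an algebraic ideal need not be invariant under multipliers. So the first attempt should be a more robust argument.

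The difficulty is that ideals in nonunital algebras need not be closed under multiplication by multipliers. We avoid this using idempotence: maximal ideals are semiprime (Proposition~\ref{prop:max-semiprime}), so by Proposition~\ref{prop:semiprime-structure}(i) they satisfy $J=J^2$. Concretely, suppose $J\supsetneq M_m(M)$ is an ideal of $M_m(A)$. Since $M_m(A)$ itself is an ideal of $M_m(A)$, we may without loss of generality replace $J$ by the ideal generated by $M_m(M)$ together with one extra element $y\notin M_m(M)$. Some entry of $y$, say $y_{kl}$, lies outside $M$. For $a,b\in A$, the product $(a e_{ik})\,y\,(b e_{lj})$ lies in $J$: the outer factors are genuine elements of $M_m(A)$, because they have entries in $A$. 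This product equals $a y_{kl} b\, e_{ij}$. Hence $J\supseteq M_m(A y_{kl} A)$.

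Next we need the element $y_{kl}$ itself, not only products. Since $M$ is maximal and $y_{kl}\notin M$, the ideal generated by $M$ and $y_{kl}$, which is $M+\mathbb C y_{kl}+Ay_{kl}+y_{kl}A+Ay_{kl}A$, equals $A$. We therefore need to show that every $w\in A$ gives $w e_{ij}\in J$. Write $w=m+\lambda y_{kl}+cy_{kl}+y_{kl}d+\sum c'y_{kl}d'$. All terms except the middle ones are handled by the above. For $c y_{kl}\,e_{ij}$, use $(c e_{ik})\,y\,e_{lj}$-type products; but $e_{lj}$ is not in $M_m(A)$, so instead use idempotence. Apply the argument to $A$ itself: $A=A^2$ as a $C^*$-algebra (Cohen factorization), so $w=\sum u_s v_s$, and $A=M+(\text{ideal of } y_{kl})$. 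Hence $A^2=A\cdot A\subseteq M+Ay_{kl}A$, because $A\cdot(\text{ideal generated by } y_{kl})\cdot A$ lands in $M+Ay_{kl}A$. More cleanly, $A=A^3\subseteq A(M+\langle y_{kl}\rangle)A\subseteq M+Ay_{kl}A$. Thus every $w e_{ij}$ lies in $M_m(M)+M_m(Ay_{kl}A)\subseteq J$, so $J=M_m(A)$.

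Therefore $M_m(M)$ is a maximal ideal of $M_m(A)$, which proves the lemma by contrapositive. The main obstacle is exactly the multiplier issue just described. It is resolved by factoring through $A=A^3$ (Cohen) and by the fact that the outer multiplication by $ae_{ik}$ and $be_{lj}$ uses genuine elements of $M_m(A)$.
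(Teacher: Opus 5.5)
Your proposal is correct and is essentially the paper's argument: both prove $M_m(M)$ is maximal using the sandwich $(a e_{ik})\,y\,(b e_{lj}) = a y_{kl} b\, e_{ij}$, and use Cohen factorization to obtain $A = M + \operatorname{span}(A y_{kl} A)$. The paper phrases this in the quotient $S=A/M$ as $\operatorname{span}(SxS)=S$ and gets it from the vanishing of the annihilators of $S$, where you use $A=A^3$ directly. Your detours through the correspondence $I\mapsto I_0$ and through idempotence of semiprime ideals are never used and can be deleted.
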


\begin{proof}
Suppose that $M$ is a maximal ideal of $A$ and put $S=A/M$.
Then $S$ is nonzero and algebraically simple, and $S^2=S$ because
$A^2=A$ by Cohen factorization theorem \cite{Cohen}. In addition,  note that left and right annihilators of $S$ are ideals.
However since $S^2\neq 0$, neither of them can equal $S$. Therefore, both of them are zero.
Consequently, if $0\ne x\in S$, there exist $s,t\in S$ with
$sxt\ne0$.  This implies that the nonzero ideal $\operatorname{span}(SxS)$ has to 
equal $S$.

Let $J$ be a nonzero ideal of $M_m(S)$ and choose an arbitrary $X\in J$ with a
nonzero entry $X_{ij}=x$ for some $1\leq i, j\leq n$.  If $E_{ki}(s)$ denotes the matrix with
entry $s$ at $(k,i)$ and all other entries zero, then a simple calculation shows
$$
E_{kl}(sxt)= E_{ki}(s)XE_{jl}(t) \in J.
$$
Since $\operatorname{span}(SxS)=S$, this entails that every elementary
matrix over $S$ in $J$ and therefore $J=M_m(S)$.
It follows that $M_m(A)/M_m(M)\cong M_m(S)$ is algebraically simple,
so the nonzero ideal $M_m(M)$ is maximal, a contradiction.
\end{proof}

\begin{lemma}\label{lem:finite-radius-scale}
Let $A$ be
simple and nonunital, with an increasing approximate unit consisting of projections $\{p_n\}$, with $p_0=0$.  Define $D=p_1Ap_1$ and
assume that $D$ is stably finite, $\QT^1_2(D)\ne\varnothing$, and
$\operatorname{rc}_{\mathrm p}(D)<\infty$.
Let $\mathcal T$ consists of extreme points of $\QT^1_2(A, [p_1])\simeq \QT^1_2(D)$.
Suppose that its infinite-scale part  $\mathcal{T}_{\rm{inf}}$ satisfies
Definition~\ref{def:admissible-scale}\ref{key 2} and that
$$
 \sup_{\tau\in\mathcal T_{\mathrm{fin}}}
 \left(\sup_j\tau(p_j)-\tau(p_n)\right)\longrightarrow0
$$
when $\mathcal T_{\mathrm{fin}}\ne\varnothing$.
Then $A$ has no maximal ideals.
\end{lemma}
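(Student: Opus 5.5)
The plan is to exhibit an admissible quasitracial projection scale for $A$ and then invoke Corollary~\ref{cor:simple-quasitracial-scale}. I take $\mathcal T$ to be the set of extreme points of $\QT^1_2(A,[p_1])$, as in the statement, and keep the approximate unit $\{p_n\}$ with $q_n=p_n-p_{n-1}$. If some $q_n$ vanish, I first pass to the subsequence of distinct $p_n$.

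The standing hypotheses on $\mathcal T$ come first. Because $A$ is simple, every nonzero lower semicontinuous $2$-quasitrace is faithful. Each $\tau\in\mathcal T$ is densely finite, since $\tau(p_1)=1$ and $\Ped(A)$ is the ideal generated by $p_1$; the latter gives $[a]\le k[p_1]$ for every $a\in\Ped(A)_+$. The normalization $d_\tau(p_1)=1$ holds by construction.

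Next I verify condition~\ref{key 1}. I fix any $\rho>\operatorname{rc}_{\mathrm p}(D)$; this is possible because $\operatorname{rc}_{\mathrm p}(D)<\infty$. Suppose $d_\tau(q_n)+\rho<d_\tau(c)$ for all $\tau\in\mathcal T$. By Remark~\ref{rem: extreme point enough}, the non-strict inequality $d_\tau(q_n)+\rho\le d_\tau(c)$ then holds on all of $\QT^1_2(A,[p_1])$. The argument there is that $f(\tau)=d_\tau(c)-\tau(q_n)-\rho$ is affine, lower semicontinuous and bounded below, so it cannot be negative anywhere without being negative at an extreme point. Proposition~\ref{lem:corner-radius}, applied with $q=p_1$, $e=q_n$ and $B=D$, then gives $q_n\precsim c$.

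Condition~\ref{key 2} is assumed outright for $\mathcal T_{\mathrm{inf}}$. For condition~\ref{key 3}, the tail condition is assumed. It remains to produce $e\in\Ped(A)_+$ and $\varepsilon_0>0$ with $\inf_{\tau\in\mathcal T_{\mathrm{fin}}}d_\tau((e-\varepsilon_0)_+)>\rho$. I take $e=p_m$ and $\varepsilon_0=1/2$, so that $(e-\varepsilon_0)_+=\tfrac12 p_m$ and $d_\tau((e-\varepsilon_0)_+)=\tau(p_m)\ge\tau(p_1)=1$. This only works if $\rho<1$, which need not hold.

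This is the main obstacle. The normalization at $p_1$ is forced, but $\rho$ can be large, so a single corner may be too small. I would first try replacing $p_1$ by a larger projection; the obvious choice, $e=p_m$ with $m$ large, still gives only a uniform bound $\ge1$ on $\mathcal T_{\mathrm{fin}}$. The cleaner route is to pass to matrices. Lemma~\ref{lem:finite-matrix-descent} says it suffices to show that $M_k(A)$ has no maximal ideals for some $k$.

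In $M_k(A)$ I use the approximate unit $p_n\otimes 1_k$ but keep the normalization at $p_1\otimes e_{11}$. The first projection of the scale then has trace $k$ rather than $1$. To repair the normalization $d_\tau(p'_1)=1$, I rescale: the scale is built with projections $p'_n=p_n\otimes1_k$, and the traces are normalized by $\tau(p_1\otimes1_k)=1$, i.e.\ divided by $k$. The comparison constant $\rho$ then becomes $\rho/k$, since $\operatorname{rc}_{\mathrm p}$ relative to $[p_1\otimes1_k]$ is $\operatorname{rc}_{\mathrm p}(D)/k$ (the corner is $M_k(D)$). For $k>\rho$, i.e.\ $\rho/k<1$, the choice $e=p'_1$, $\varepsilon_0=1/2$ satisfies condition~\ref{key 3}, because every trace has value $1$ there.

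Conditions~\ref{key 1}, \ref{key 2} and the tail condition transfer unchanged under this uniform rescaling, since $\mathcal T_{\mathrm{inf}}$ and $\mathcal T_{\mathrm{fin}}$ are preserved and all quantities scale by $1/k$. Corollary~\ref{cor:simple-quasitracial-scale} then shows that the simple algebra $M_k(A)$ has no maximal ideals, and Lemma~\ref{lem:finite-matrix-descent} finishes the proof.
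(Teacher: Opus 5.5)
Your proposal is correct and follows the paper's proof. The paper likewise fixes $\rho>\operatorname{rc}_{\mathrm p}(D)$ and an integer $m>\rho$, and passes to $M_m(A)$ with $P_n=p_n\otimes1_m$ and $\widehat\tau=\frac1m\tau^{(m)}$. It obtains condition~(1) with constant $\rho/m$ from Proposition~\ref{lem:corner-radius} and Remark~\ref{rem: extreme point enough}, takes $e=P_1$ and $\varepsilon_0=1/2$ for condition~(3), and concludes via Corollary~\ref{cor:simple-quasitracial-scale} and Lemma~\ref{lem:finite-matrix-descent}. Your preliminary check of condition~(1) in $A$ itself becomes unnecessary once you pass to matrices. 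The extra details you supply (faithfulness, dense finiteness, discarding vanishing $q_n$) are harmless and fill in points the paper leaves implicit.
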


\begin{proof}
Choose $\rho>\operatorname{rc}_{\mathrm p}(D)$ and an integer
$m>\rho$.  Define $B=M_m(A)$ and set
$$
 P_n=p_n\otimes1_m,\qquad Q_n=q_n\otimes1_m,
 \qquad \widehat\tau=\frac1m\tau^{(m)}
 \quad(\tau\in\mathcal T).
$$
Here $\tau^{(m)}$ is the canonical matrix extension.  For every $n$, note that
$$
 \widehat\tau(P_n)=\tau(p_n),\qquad
 \widehat\tau(Q_n)=\tau(q_n).
$$
Note that 
\[\QT_2^1(B, [P_1])=\{\hat{\tau}: \tau\in \QT_2^1(A, [p_1])\}\]
and therefore $\hat{\mathcal{T}}:=\{\hat{\tau}: \tau\in \mathcal{T}\}$ is the extreme boundary of $\QT_2^1(B, [P_1])$. Moreover, one has 
$\hat{\mathcal{T}}_{\inf}=\{\hat{\tau}: \tau\in \mathcal{T}_{\inf}\}$
and
$\hat{\mathcal{T}}_{\rm fin}=\{\hat{\tau}: \tau\in \mathcal{T}_{\rm fin}\}.$

For $c\in B_+$, suppose
$$
 d_{\widehat\tau}(Q_n)+\rho/m<d_{\widehat\tau}(c)
 \quad(\tau\in\mathcal T)
$$
holds.  After multiplying by $m$, this implies
$$
 d_{\tau^{(m)}}(Q_n)+\rho<d_{\tau^{(m)}}(c)
 \quad(\tau\in\mathcal T).
$$
Since all functional in $F(\Cu(A))$ can also be written in the form $d_{\eta^{(m)}}$ for $\eta\in \QT_2(A)$,  Lemma~\ref{lem:corner-radius} and Remark \ref{rem: extreme point enough} show $Q_n\precsim_B c$.
This verifies
Definition~\ref{def:admissible-scale}\ref{key 1} with the constant $\rho/m$. Definition~\ref{def:admissible-scale}\ref{key 2} holding for $\hat{\mathcal{T}}_{\inf}$ directly follows from the assumption that $\mathcal{T}_{\inf}$ satisfies Definition~\ref{def:admissible-scale}\ref{key 2}.

For $\hat{\mathcal{T}}_{\rm fin}$, we take $e=P_1$ and $\varepsilon_0=1/2$.
The projection $P_1$ belongs to $\Ped(B)_+$, and
$$
 d_{\widehat\tau}((P_1-1/2)_+)
 =\widehat\tau(P_1)=1>\rho/m.
$$
Finally, by the assumption on $\mathcal{T}_{\rm fin}$, it is not hard to see
\[ \sup_{\hat{\tau}\in\hat{\mathcal T}_{\mathrm{fin}}}
 \left(\sup_j\hat{\tau}(P_j)-\hat{\tau}(P_n)\right)\longrightarrow0.\]
This establishes Definition~\ref{def:admissible-scale}\ref{key 3}.
It then follows from Corollary~\ref{cor:simple-quasitracial-scale} that $B$ has no maximal ideals.
Then Lemma~\ref{lem:finite-matrix-descent} provides the conclusion for $A$.
\end{proof}

In the finitely many normalized extreme $2$-quasitraces case, we have the following.

\begin{theorem}\label{thm:finite-radius-scale}
Let $A$ be simple and nonunital, with an increasing projection
approximate identity $(p_n)$.  Suppose that $D=p_1Ap_1$ is stably
finite, $\operatorname{rc}_{\mathrm p}(D)<\infty$, and $\QT^1_2(D)$, (equivalently, $\QT_2^1(A, [p_1])$) is
nonempty with finitely many extreme points.  Then $A$ has no maximal
ideals.
\end{theorem}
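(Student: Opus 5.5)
The plan is to reduce Theorem~\ref{thm:finite-radius-scale} to Lemma~\ref{lem:finite-radius-scale}. We take $\mathcal T=\partial_e\QT^1_2(A,[p_1])=\{\tau_1,\dots,\tau_k\}$, which is finite by hypothesis and nonempty because $\QT^1_2(D)$ is a nonempty compact convex set. Each $\tau_i$ is a nonzero lower semicontinuous $2$-quasitrace on the simple algebra $A$. It is therefore faithful, and it is densely finite because $\tau_i(p_1)=1$ and $p_1$ generates $\Ped(A)$. Every $p_n$ lies in $\Ped(A)$, so $\tau_i(p_n)<\infty$ for all $n$. Hence the split $\mathcal T=\mathcal T_{\mathrm{inf}}\sqcup\mathcal T_{\mathrm{fin}}$ of Definition~\ref{def:projection-scale} makes sense: $\tau_i\in\mathcal T_{\mathrm{inf}}$ exactly when the increasing sequence $\tau_i(p_n)$ is unbounded.

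Next we verify the two hypotheses of Lemma~\ref{lem:finite-radius-scale}; finiteness of $\mathcal T$ makes both uniformity requirements trivial.

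For Definition~\ref{def:admissible-scale}\ref{key 2} on $\mathcal T_{\mathrm{inf}}$, the supremum $\sup_{\tau\in\mathcal T_{\mathrm{inf}}}\tau(p_n)$ is a maximum of finitely many finite numbers, so it is finite. Also
\[
\inf_{\tau\in\mathcal T_{\mathrm{inf}}}\tau(p_n)=\min_i \tau_i(p_n)\to\infty,
\]
since each $\tau_i(p_n)$ increases to $+\infty$ and a minimum of finitely many sequences tending to infinity tends to infinity.

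For $\mathcal T_{\mathrm{fin}}$, each $\tau_i$ there has $\tau_i(p_n)\uparrow\sup_j\tau_i(p_j)<\infty$, so $\sup_j\tau_i(p_j)-\tau_i(p_n)\to0$. Taking the maximum over finitely many $i$ gives the required uniform convergence.

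The remaining hypotheses of Lemma~\ref{lem:finite-radius-scale}, namely $D$ stably finite, $\QT_2^1(D)\neq\varnothing$ and $\operatorname{rc}_{\mathrm p}(D)<\infty$, are assumed in the theorem. The lemma then gives that $A$ has no maximal ideals.

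The main point to check is that the lemma really is applied with $\mathcal T$ equal to the extreme boundary, using the identification $\QT^1_2(A,[p_1])\simeq\QT^1_2(D)$ from Remark~\ref{rem:Cuntz stable isomorphism}. Comparison condition~\ref{key 1} inside the lemma reduces from all normalized quasitraces to extreme ones via Remark~\ref{rem: extreme point enough}. That remark relies on Choquet theory for the compact convex set $\QT^1_2(D)$ and needs its extreme points to correspond to those of $\QT^1_2(A,[p_1])$. This correspondence holds because the identification is an affine homeomorphism, so it preserves extreme points.
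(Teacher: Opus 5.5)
Your proposal is correct and follows essentially the same route as the paper: you take $\mathcal T=\partial_e\QT^1_2(A,[p_1])$, use finiteness of this set to get condition \ref{key 2} for $\mathcal T_{\mathrm{inf}}$ and the uniform tail condition for $\mathcal T_{\mathrm{fin}}$, and then invoke Lemma~\ref{lem:finite-radius-scale}. The one minor variation is how you show $\tau(p_n)<\infty$. You use $p_n\in\Ped(A)$ together with the dimension-finite ideal, whereas the paper notes that $p_n$ is subequivalent to finitely many copies of the full projection $p_1$; the two arguments are equivalent.
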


\begin{proof}
Take $\mathcal T=\partial_e\QT^1_2(A, [p_1])$ and partition it into $\mathcal{T}_{\inf}$ and $\mathcal{T}_{\rm fin}$. Since $A$ is simple,
each $p_n$ is subequivalent to a finite direct sum of
copies of the full projection $p_1$. Thus, $\tau(p_n)$ is finite for
every $\tau\in\mathcal T$. Therefore, one has
\[\sup_{\tau\in \mathcal{T}_{\inf}}\tau(p_n)=\max_{\tau\in \mathcal{T}_{\inf}}\tau(p_n)<\infty.\]
Now let $\tau\in\mathcal T_{\mathrm{inf}}$. Then  each sequence
$\tau(p_n)$  increases to
$\infty$ by definition.  Given $R>0$, choose $N_\tau$ with
$\tau(p_{N_\tau})>R$ and define $N_0=\max_{\tau\in \mathcal{T}_{\inf}}N_{\tau}$. Then $\inf_{\mathcal T_{\mathrm{inf}}}\tau(p_n)>R$
for all $n>N_0$. Therefore, one has 
\[\lim_{n\to\infty}\inf_{\tau\in \mathcal{T}_{\inf}}\tau(p_n)=\infty.\]

For $\tau\in\mathcal T_{\mathrm{fin}}$, the number $\sup_j\tau(p_j)<\infty$. Since $\{p_n: n=0,1\dots,\}$ is increasing, the number
$\sup_j\tau(p_j)-\tau(p_n)\to 0$ as $n\to\infty$.  This implies
\[ \sup_{\tau\in\mathcal T_{\mathrm{fin}}}
 \left(\sup_j\tau(p_j)-\tau(p_n)\right)\longrightarrow0\]
because $\mathcal{T}_{\rm fin}$ is finite.
Now the result follows from Lemma~\ref{lem:finite-radius-scale}. 
\end{proof}

This first covers nonunital simple AF algebras with finitely many trace. As another corollary, we have the following for $\mathcal Z$-stable $C^*$-algebras.

\begin{corollary}\label{cor:z-stable-projectional}
Let $A$ be simple, separable, nonunital, stably finite,
and $\mathcal Z$-stable, with an increasing projection approximate identity $(p_n)$.
If $\QT^1_2(A, [p_1])$ has finitely many extreme points, then $A$ has no
maximal ideals.
\end{corollary}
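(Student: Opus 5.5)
The plan is to reduce Corollary~\ref{cor:z-stable-projectional} to Theorem~\ref{thm:finite-radius-scale}. That theorem asks for four things about the corner $D=p_1Ap_1$: that $D$ is stably finite, that $\QT^1_2(D)$ (identified with $\QT^1_2(A,[p_1])$ by Remark~\ref{rem:Cuntz stable isomorphism}) is nonempty with finitely many extreme points, and that $\operatorname{rc}_{\mathrm p}(D)<\infty$. Finiteness of the extreme boundary is exactly our hypothesis, and stable finiteness passes from $A$ to the hereditary subalgebra $D$. So three facts remain to check.

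First, $\QT^1_2(D)\neq\varnothing$. Since $A$ is simple and stably finite, so is the unital corner $D$. For a unital simple stably finite $C^*$-algebra, the Blackadar--Handelman theory \cite{BH} gives a normalized $2$-quasitrace. Alternatively, one can argue that $D$ is exact and $\mathcal Z$-stable, and then use Haagerup's theorem (Remark~\ref{quasi tr}) together with the existence of tracial states for stably finite $\mathcal Z$-stable algebras.

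Second, $\operatorname{rc}_{\mathrm p}(D)<\infty$; I would in fact show $\operatorname{rc}(D)=0$. The corner $D$ is a full hereditary subalgebra of the $\sigma$-unital simple algebra $A$, so Brown's theorem gives $D\otimes\K\cong A\otimes\K$. Since $\mathcal Z$ is strongly self-absorbing, $\mathcal Z$-stability passes to stabilizations and to full hereditary subalgebras (Toms--Winter). Hence $D$ is unital, simple, separable, stably finite and $\mathcal Z$-stable. By R{\o}rdam's theorem that $\mathcal Z$-stable simple algebras have almost unperforated Cuntz semigroup, $D$ has strict comparison with respect to lower semicontinuous $2$-quasitraces. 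Thus $\operatorname{rc}(D)=0$, and so $\operatorname{rc}_{\mathrm p}(D)\le\operatorname{rc}(D)=0<\infty$ by the inequality after Definition~\ref{def:restricted-radius}.

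With these facts in place, Theorem~\ref{thm:finite-radius-scale} applies and yields that $A$ has no maximal ideals.

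The main obstacle is the second step. Its cited inputs are formulated in slightly different settings, and one must check that they match Definition~\ref{def:comparison-radii}, which is stated in terms of $\QT^1_2(D)$ rather than tracial states. I would address this with Remark~\ref{rem:radius comparison coincide}, which identifies $\operatorname{rc}(D)$ with $\operatorname{rc}(\Cu(D),[1_D])$. Almost unperforation of $\Cu(D)$ then gives comparison by all functionals on $\Cu(D)$, which by \cite{EHS} are exactly the $d_\tau$ with $\tau\in\QT_2(D)$. Normalizing as in the proof of Proposition~\ref{lem:corner-radius} gives strict comparison relative to $\QT^1_2(D)$.
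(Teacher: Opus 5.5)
Your proposal is correct and follows essentially the paper's route: reduce to Theorem~\ref{thm:finite-radius-scale} and get $\operatorname{rc}_{\mathrm p}(D)=0$ from R\o rdam's almost unperforation. There are two small differences: the paper moves almost unperforation to the corner through $\Cu(D)\cong\Cu(A)$ rather than moving $\mathcal Z$-stability via Toms--Winter, and you explicitly check $\QT^1_2(D)\neq\varnothing$ via Blackadar--Handelman, which the paper leaves implicit. You should drop your alternative nonemptiness argument, though, since exactness of $D$ is not among the hypotheses and so Haagerup's theorem is not available.
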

\begin{proof}
Define $B=p_1Ap_1$.
The projection $p_1$ is full and therefore $B\otimes \K\simeq A\otimes \K$. Therefore, 
$\Cu(B)$ is isomorphic to $\Cu(A)$. Since $A$ is $\mathcal{Z}$-stable, the Cuntz semigroup $\Cu(A)$ is almost unperforated by \cite{RordamZ}, and so is $\Cu(B)$. This implies that
$\operatorname{rc}_{\mathrm p}(B)=0$.
Theorem~\ref{thm:finite-radius-scale} now applies to $A$.
\end{proof}

\subsection{Hereditary subalgebras}

\begin{corollary}[Real rank zero]\label{cor:hereditary-scale-conditions}
Let $D$ be separable, unital, simple, stably finite, and of real rank
zero, with $\operatorname{rc}(D)<\infty$.
If $\QT^1_2(D)$ has finitely many extreme points, then every nonzero
hereditary subalgebra of $D\otimes\K$ has no maximal ideals.
\end{corollary}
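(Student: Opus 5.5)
Let $H\subseteq D\otimes\K$ be a nonzero hereditary subalgebra. Since $D\otimes\K$ is simple, $H$ is simple. Since $D$ has real rank zero, so does $D\otimes\K$, and hence so does $H$. The proof splits according to whether $H$ is unital.

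\emph{Case 1: $H$ is unital.} Then $H$ has no maximal ideals other than $0$. Indeed, $H$ is simple and unital, so it is algebraically simple, and the only proper ideal in the paper's sense is $0$. This makes the statement degenerate. In fact, a unital simple $H$ admits no nonzero proper ideal at all, so the only candidate for a maximal ideal is $0$. That is excluded by the convention $0\subsetneq J$. So the claim holds vacuously.

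\emph{Case 2: $H$ is nonunital.} I would reduce to Theorem~\ref{thm:finite-radius-scale}.

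\emph{Step 1: an increasing projection approximate unit.} Since $H$ is separable (as $D$ is) and of real rank zero, \cite[Theorem 2.6]{BrownPedersen} gives an approximate unit of projections. Separability lets us extract an increasing sequence $(p_n)$. Passing to a subsequence, we may assume the $p_n$ are strictly increasing, so that each $q_n=p_n-p_{n-1}$ is nonzero.

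\emph{Step 2: the corner $B=p_1Hp_1$.} This is a corner of $D\otimes\K$ by a nonzero projection. Since $D\otimes\K$ is simple, $p_1$ is full, so Brown's theorem (Remark~\ref{rem:Cuntz stable isomorphism}) gives $B\otimes\K\cong D\otimes\K$. Consequently $\Cu(B)\cong\Cu(D)$, with $[1_B]$ corresponding to $[p_1]$. From this:
\begin{itemize}
\item $B$ is unital, simple and stably finite;
\item $\QT_2^1(B)\cong\QT_2^1(A,[p_1])$;
\item this simplex is affinely homeomorphic to $\QT^1_2(D)$ via the rescaling $\tau\mapsto\tau/d_\tau(p_1)$, so it has finitely many extreme points.
\end{itemize}
The rescaling is well defined because each $\tau\in\QT_2^1(D)$ extends to a densely finite quasitrace on $D\otimes\K$, and fullness gives $0<d_\tau(p_1)<\infty$.

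\emph{Step 3: finiteness of $\operatorname{rc}_{\mathrm p}(B)$.} This is the main obstacle. By Proposition~\ref{lem:corner-radius} applied to $D\otimes\K$, we have $\operatorname{rc}(B)=\operatorname{rc}(\Cu(D\otimes\K),[p_1])$. Comparing with $\operatorname{rc}(D)=\operatorname{rc}(\Cu,[1_D])$ amounts to a change of normalizing element. Since $p_1$ and $1_D$ are both full compact elements, there are integers $k,l$ with
\[
[p_1]\leq k[1_D],\qquad [1_D]\leq l[p_1].
\]
So $d_\tau(1_D)\le l\,d_\tau(p_1)$, and an $r$-buffer relative to $[1_D]$ is implied by an $lr$-buffer relative to $[p_1]$. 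Hence
\[
\operatorname{rc}(\Cu,[p_1])\leq l\cdot\operatorname{rc}(\Cu,[1_D])<\infty.
\]
Finally $\operatorname{rc}_{\mathrm p}(B)\le\operatorname{rc}(B)<\infty$. Alternatively, Proposition~\ref{prop:radii-equality} could be used here, since $B$ has real rank zero. I expect the care to lie in the direction of the inequality and in the use of Remark~\ref{rem:radius comparison coincide} for the unital algebras involved.

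\emph{Step 4: conclusion.} With $(p_n)$ in place, $B=p_1Hp_1$ stably finite, $\operatorname{rc}_{\mathrm p}(B)<\infty$, and $\QT_2^1(B)$ having finitely many extreme points, Theorem~\ref{thm:finite-radius-scale} applies. Hence $H$ has no maximal ideals.
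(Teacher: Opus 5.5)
Your proposal is correct and takes essentially the same route as the paper. The unital case follows from algebraic simplicity, Brown--Pedersen supplies an increasing projection approximate unit, and the full corner $p_1Hp_1$ inherits finitely many extreme normalized quasitraces and a finite radius of comparison (your bound via $[1_D]\leq l[p_1]$ is exactly the paper's $\operatorname{rc}(B)\leq m\rho$), after which Theorem~\ref{thm:finite-radius-scale} applies. One small correction: the rescaling $\tau\mapsto\tau/d_\tau(p_1)$ is not affine, but as a bijection between two bases of the same cone it still carries extreme points to extreme points, which is all you need.
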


\begin{proof}
First, any hereditary subalgebra in $D\otimes\K$ is separable, and thus is of the form $A=\overline{h(D\otimes \K)h}$ for some positive $h\in A$. If $A$ is unital, simplicity of $A$ gives the algebraic
simplicity of $A$.  Otherwise, it follows from \cite{BrownPedersen} that real rank zero of $D\otimes \K$ provides an increasing
projection approximate identity $(p_n)$ in $A$. Write $B=p_1Ap_1$, which is also a hereditary $C^*$-algebra of $D\otimes \K$. Then the inclusion $\iota: B\to D\otimes \K$ induce an order embedding $\Phi: \Cu(A)\to \Cu(D\otimes \K)=\Cu(D)$ such that $\Phi([1_B]_B)=[p_1]_D$ (see, e.g., \cite[Lemma 5.11]{APT}). Moreover, since $B$ is full in $D\otimes \K$, then $\Phi$ has to be an isomorphism. This implies that 
$\QT_2^1(B)$ can be identified by $\QT_2^1(D\otimes \K, [p_1])$.

Then we work in $\Cu(D)$. First, by simplicity, there exists $n, m\in \mathbb N$ such that $[p_1]\leq n[1_D] $ and $[1_D] \leq m[p_1]$. This implies that for any $\tau\in \QT_2^1(D\otimes \K, [p_1])$, one has 
\[\frac{1}{n}\leq d_\tau([1_D])=\tau(1_D)\leq m.\]
In addition, note that $f:\QT_2^1(D\otimes \K, [p_1])\to \QT_2^1(D\otimes\K, [1_D])$, defined by $\tau\mapsto \tau/\tau(1_D)$, is bijective. Now, let $\rho>\rm rc(D)$ and suppose 
\[d_\tau([a])+\rho\tau(1_D)<d_\tau([b])\]
holds for any $\tau\in \QT_2^1(D\otimes \K, [p_1])=\QT_2^1(B)$. Then, this implies that 
\[d_\eta([a])+\rho<d_\eta([b])\]
holds for any $\eta\in \QT_2^1(D\otimes \K, [1_D])=\QT^1_2(D)$ and thus $a\precsim b$. This implies $\rm rc(B)\leq m\rho<\infty$.

Finally note that $f^{-1}:\QT_2^1(D)\to \QT_2^1(B)$ preserve the extreme points.
Thus, if $\QT^1_2(D)$ has finitely many extreme points, so does $\QT^1_2(B)$. Now apply Theorem~\ref{thm:finite-radius-scale}  to $A$.
\end{proof}

We end this subsection by providing a concrete AF example satisfying  Corollary \ref{cor:hereditary-scale-conditions} such that both $\mathcal{T}_{\inf}$ and $\mathcal{T}_{\rm fin}$ are non-empty. This should be compared to the case in Theorem \ref{thm:stable-finite}, in which only $\mathcal{T}_{\inf}$ is non-empty.


\begin{example}\label{thm:AF-example}
Consider the ordered group
$$
 G=\mathbb Q^2,\qquad
 G_+=\{(0,0)\}\cup\{(x,y):x>0,\ y>0\},\qquad u=(1,1).
$$
The
normalized states form the segment with endpoints
$\rho_1(x,y)=x$ and $\rho_2(x,y)=y$.
Then using \cite{EHS}, one obtains a simple unital AF algebra $D_0$
with ordered $K_0$-group $(G,G_+,u)$ and these two extremal traces.
Choose orthogonal projections $q_n\in D_0\otimes\K$ with
$[q_n]=(2^{-n},1)$, and put
$$
 h=\sum_{n=1}^\infty2^{-n}q_n,
 \qquad A=(D_0\otimes\K)_{h}.
$$
Then $A$ is simple, nonunital, nonstable, and AF.  Its
Pedersen ideal is proper, and it has no maximal ideals.
\end{example}

\begin{proof}
Projection realization in the stable AF algebra gives the $q_n$;
placing representatives in disjoint finite matrix blocks makes them
orthogonal. Recall a standard fact that lower semi-continuous traces can be identified by states on the dimension group. This allows to identify $\rho_1, \rho_2$ to be extreme traces on $D_0$. Use $p_n=\sum_{j=1}^nq_j$ and normalize
$\tau_1=2\rho_1$, $\tau_2=\rho_2$ at $p_1=q_1$. Moreover, one has
\[ \tau_1(p_n)=\sum_{i=1}^n\tau_1(q_i)=2(1-2^{-n})\]
and
\[\tau_2(p_n)=\sum_{i=1}^n\tau_2(q_i)=n.\]
Define $\mathcal T=\{\tau_1,\tau_2\}$ has
$\mathcal T_{\mathrm{fin}}=\{\tau_1\}$ and
$\mathcal T_{\mathrm{inf}}=\{\tau_2\}$.

The order on $K_0(D_0)$ gives strict comparison of projections. This implies the strict comparison of positive element because $D_0\otimes \K$ is AF and has real rank 0. Therefore, one has $\rm rc(D_0)=0$.
Then it follows from 
Corollary~\ref{cor:hereditary-scale-conditions} that $A$ has no maximal ideals.

On the other hand, since $A$ has a finite trace $\tau_1$, it is not stable. 
Finally, the properness
of $\Ped(A)$ in $A$ follows from \cite[Remark 4.6(c)]{B} because $A$ has an infinite trace $\tau_2$.  
\end{proof}

\subsection{$C^*$-algebras containing a properly infinite projection}
In this section, we show that if there exists a  non-trivial properly infinite projection in a simple non-unital $C^*$-algebra $A$ with an approximate identities of increasing projections, then $A$ is algebraically simple.  Recall that a nonzero projection $p$ is properly infinite if
$p\oplus p\precsim p$.
\begin{lemma}\label{lem:packing-pedersen}
Let $(p_n)$ be an increasing projection approximate identity of $A$, with
$p_0=0$ and $q_n=p_n-p_{n-1}$.  Suppose that, for a projection
$P\in M_m(A)$, there are mutually orthogonal projections $r_n\leq P$
equivalent to $q_n\otimes e_{11}$.  Then $\Ped(A)=A$.
\end{lemma}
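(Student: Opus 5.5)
Since $\Ped(A)$ is a hereditary ideal containing every projection of $A$ (Theorem~\ref{thm:dense-maximal}), the plan is to realize an arbitrary positive element of $A$ as something dominated by, or Murray--von Neumann moved into, a single projection. Transport the whole approximate unit into the fixed projection $P$. For each $n$ choose a partial isometry $w_n\in M_m(A)$ with $w_n^*w_n=q_n\otimes e_{11}$ and $w_nw_n^*=r_n\le P$. The $r_n$ are mutually orthogonal and the $q_n\otimes e_{11}$ are mutually orthogonal. Hence for any bounded choice of coefficients the series $\sum_n \lambda_n w_n$ converges strictly, though not necessarily in norm. To stay inside $M_m(A)$, I will only use coefficients tending to $0$, or multiply by elements of $A$ so that the tails become small in norm.

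Now fix $x\in A_+$; it suffices to show $x\in\Ped(A)$. Set $W=\sum_n w_n$, with convergence in the strict topology of $M(M_m(A))$. Then $W^*W=\sum_n q_n\otimes e_{11}=1\otimes e_{11}$ strictly, since $(p_n)$ is an approximate unit, and $WW^*=\sum r_n\le P$. Put $y=W(x^{1/2}\otimes e_{11})$. Because $x^{1/2}(1-p_N)\to0$ in norm, the partial sums $\sum_{n\le N}w_n(x^{1/2}\otimes e_{11})$ converge in norm, so $y\in M_m(A)$. Then
$$
 y^*y=x\otimes e_{11},\qquad yy^*=W(x\otimes e_{11})W^*\le\|x\|\,WW^*\le\|x\|P .
$$
Since $P$ is a projection in $M_m(A)$, it lies in $\Ped(M_m(A))=M_m(\Ped(A))$. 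Heredity then gives $yy^*\in\Ped(M_m(A))$, and strong invariance (Theorem~\ref{thm:dense-maximal}(iii)) gives $x\otimes e_{11}\in\Ped(M_m(A))$. Hence $x\in\Ped(A)$. Since $\Ped(A)$ is spanned by positives, $\Ped(A)=A$.

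The main obstacle is the bookkeeping with multipliers. I need $W$ to be a genuine multiplier of $M_m(A)$ and $y=W(x^{1/2}\otimes e_{11})$ to be norm-convergent; both follow from orthogonality of the ranges and sources together with $x^{1/2}p_N\to x^{1/2}$. I also need the identification $\Ped(M_m(A))=M_m(\Ped(A))$, which is standard. I could avoid it by arguing directly that the set $\{x: x\otimes e_{11}\in\Ped(M_m(A))\}$ is a dense ideal of $A$. If that identification caused trouble, I would instead note that $\{a\in A: a\otimes e_{11}\in\Ped(M_m(A))\}$ is a dense ideal of $A$, because it contains each $p_n$. This means it contains $\Ped(A)$, and the computation above shows it is all of $A$, which is compatible with the needed conclusion.
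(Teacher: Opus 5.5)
Your argument is correct and is essentially the paper's proof. The paper builds the same element $y$ as the norm limit of $\bigl(\sum_{n\le N}u_n\bigr)(a^{1/2}\otimes e_{11})$ and differs only at the end. It uses $Py=y$ to write $a\otimes e_{11}=y^*Py$ and then $a=\sum_{i,j}y_{i1}^*P_{ij}y_{j1}\in\Ped(A)$, which needs only the ideal property. You instead use $yy^*\le\|x\|P$ together with heredity and strong invariance of $\Ped(M_m(A))$. Both endings are fine.

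Two side claims should be fixed, though neither carries weight in the argument.

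First, $W=\sum_n w_n$ does \emph{not} converge strictly in $M(M_m(A))$ when $A$ is nonunital. Since $Pw_n=w_n$, the block $P\sum_{N<n\le M}w_n=\sum_{N<n\le M}w_n$ is a partial isometry of norm $1$ whenever some $q_n\neq0$ with $N<n\le M$. So $(PW_N)$ is not norm Cauchy, and for the same reason $\sum_n r_n$ is not strictly convergent. $W$ exists only as a $\sigma$-strong limit in $M_m(A)^{**}$, where it is merely a left multiplier. You do not need it. Put $W_N=\sum_{n\le N}w_n$ and $y_N=W_N(x^{1/2}\otimes e_{11})$. Then $y_N\to y$ in norm, $y_N^*y_N=(x^{1/2}p_Nx^{1/2})\otimes e_{11}\to x\otimes e_{11}$, and $y_Ny_N^*\le\|x\|W_NW_N^*=\|x\|\sum_{n\le N}r_n\le\|x\|P$. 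This last inequality passes to the limit.

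Second, your fallback runs in the wrong direction. Knowing that $\{a\in A:a\otimes e_{11}\in\Ped(M_m(A))\}$ contains $\Ped(A)$ and equals $A$ does not yield $\Ped(A)=A$. What you need is $\Ped(M_m(A))\subseteq M_m(\Ped(A))$. That follows at once from minimality of the Pedersen ideal, since $M_m(\Ped(A))$ is a dense two-sided ideal of $M_m(A)$.
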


\begin{proof}
Choose $u_n\in M_m(A)$ with
$u_n^*u_n=q_n\otimes e_{11}$ and $u_nu_n^*=r_n$.
Orthogonality gives $u_n^*u_m=0$ for $n\ne m$.
For $a\in A_+$ set
$$
 y_N=\Bigl(\sum_{n=1}^Nu_n\Bigr)(a^{1/2}\otimes e_{11}).
$$
If $M>N$, then
$$
 \|y_M-y_N\|^2
 =\|a^{1/2}\sum_{n=N+1}^M(q_n\otimes e_{11})a^{1/2}\|= \|a^{1/2}(p_M-p_N)a^{1/2}\|\longrightarrow0,
$$
because $(p_n)$ is an approximate identity.  Thus $y_N\to y$ in
norm for some $y\in M_m(A)$.  Since $Pu_n=Pr_nu_n=r_nu_n=u_n$, one has $Py=y$ . In addition, one has
$$
 y^*Py=\lim_Ny_N^*y_N
 =\lim_N(a^{1/2}p_Na^{1/2})\otimes e_{11}=a\otimes e_{11}.
$$
By Theorem~\ref{thm:dense-maximal}(i), $P\in\Ped(M_m(A))=M_m(\Ped(A))$
(see e.g., \cite[Section~5.6]{Pedersen}) and therefore every entry $P_{ij}$ of $P$ comes from
$\Ped(A)$.  Taking the first diagonal entry of the exact identity
above gives
$$
 a=\sum_{i,j=1}^m y_{i1}^*P_{ij}y_{j1}\in\Ped(A).
$$
All positive elements belong to $\Ped(A)$, and hence $\Ped(A)=A$.
\end{proof}

\begin{theorem}\label{thm:properly-infinite-projection}
Let $A$ be simple and $\sigma$-unital, with an approximate identity
of increasing projections.  If $A$ contains a nonzero properly infinite
projection, then  $\operatorname{Ped}(A)=A$.
\end{theorem}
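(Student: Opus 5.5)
The plan is to reduce to Lemma~\ref{lem:packing-pedersen}: produce a projection $P$ in some $M_m(A)$ (we will take $m=1$, $P$ the given properly infinite projection) and mutually orthogonal subprojections $r_n\leq P$ with $r_n\sim_{\rm MvN} q_n\otimes e_{11}$. Lemma~\ref{lem:packing-pedersen} then yields $\Ped(A)=A$ directly.

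Let $P\in A$ be a nonzero properly infinite projection. By proper infiniteness, $P\oplus P\precsim P$, so we can find mutually orthogonal projections $P_1',P_2'\leq P$ with $P_1'\sim P_2'\sim P$. Iterating this inside $P_2'$ (which is again properly infinite, being equivalent to $P$) gives an infinite sequence of mutually orthogonal subprojections $E_1,E_2,\dots\leq P$, each MvN-equivalent to $P$. Concretely, we set $E_1=P_1'$ and continue the splitting recursively in the remaining corner.

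The next step is to place each $q_n$ under $E_n$. Since $A$ is simple and $P\neq 0$, $P$ is full. Hence $q_n\precsim P\otimes 1_{k}$ for some $k$, that is, $[q_n]\leq k[P]$ in $\Cu(A)$, because $q_n$ is a projection and is compact in the ideal generated by $P$. Proper infiniteness gives $k[P]\leq[P]$ (by induction from $2[P]\leq[P]$). Therefore $q_n\precsim P\sim E_n$. Lemma~\ref{R lem}(iv) converts this into $q_n\preceq E_n$, and Lemma~\ref{lem:projection-lifting} provides a partial isometry with $q_n\sim_{\rm MvN} r_n\leq E_n$. The projections $r_n$ are then mutually orthogonal and lie under $P$, so Lemma~\ref{lem:packing-pedersen} with $m=1$ finishes the argument.

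I expect the main subtlety to be the fullness step $[q_n]\leq k[P]$. In a simple algebra, the closed ideal generated by $P$ is $A$, so $q_n$ lies in the algebraic ideal generated by $P$ because that ideal is $\Ped(A)$ and contains all projections (Theorem~\ref{thm:dense-maximal}). Writing $q_n=\sum_i x_iPy_i$ and using the standard estimate $q_n\precsim\bigoplus_i P$ (for instance via $q_n=q_nq_nq_n$ and Lemma~\ref{lem:cuntz-cutdown}(i),(iii)) gives the required bound. Everything else is routine. The $\sigma$-unitality hypothesis is used only through the existence of the increasing projection approximate unit $(p_n)$.
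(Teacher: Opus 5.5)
Your proposal is correct and follows the paper's proof essentially step by step. Both split the properly infinite projection into infinitely many mutually orthogonal subprojections equivalent to it, use fullness and proper infiniteness to get $q_n\precsim P\sim E_n$, lift to orthogonal $r_n\le E_n\le P$, and conclude with Lemma~\ref{lem:packing-pedersen} for $m=1$. Your derivation of $[q_n]\le k[P]$, from the fact that the algebraic ideal generated by $P$ is $\Ped(A)$, just spells out a step the paper asserts in one line.
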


\begin{proof}
The unital case follows from simplicity, so assume $A$ is nonunital. Let $(p_n)$ be the increasing projections serving as the approximate identity of $A$ and define $q_n=p_n-p_{n-1}$ as before.  Let $p\in A$ be a non-zero properly infinite projection.
Since $p$ is full and $[q_n]$ is compact,  there exists a $k_n\in \mathbb N$ such that,
\[ [q_n]\leq k_n[p]\leq[p].\]

We next construct mutually orthogonal projections $s_n\leq p$, each
equivalent to $p$.  Proper infiniteness gives orthogonal subprojections
$s_1,t_1\leq p$ with $s_1\sim_{\rm MvN}p\sim_{\rm MvN}t_1$.
The projection $t_1$ is again properly infinite.  Split it into
orthogonal $s_2,t_2$, each equivalent to $t_1$, and continue.
At step $n$, split the unused projection $t_{n-1}$; hence all the
$s_n$ are orthogonal and all are equivalent to $p$.
Since $q_n\precsim s_n$, Lemma~\ref{lem:projection-lifting} gives
$u_n\in A$ with
$$
 u_n^*u_n=q_n,\qquad r_n=u_nu_n^*\leq s_n.
$$
The $r_n$ are orthogonal subprojections of $p$.
Now, Lemma~\ref{lem:packing-pedersen}, with $m=1$ and $P=p$, implies
$\Ped(A)=A$.  
\end{proof}

We remark that if $A$ is $\sigma$-unital, simple and  purely infinite, then it follows from \cite[Theorem 2.11]{KNZpure} that any positive element is a positive combination of projections. Since $\operatorname{Ped}(A)$ contains all projections, one also has $\operatorname{Ped}(A)=A$.

\section{Non-simple $C^*$-algebras}\label{sec:nonsimple}
When the $C^*$-algebra is not simple, one looks at a nonzero closed
ideal $I$ and the short exact sequence
$0\to I\to E\to E/I\to0$.  The next proposition shows how
the preceding simple examples can be used in the setting.

\begin{proposition}\label{prop:nonsimple-sequence}
Consider the short exact sequence of $C^*$-algebras
$$
 0\longrightarrow I\longrightarrow E
 \overset{\pi}{\longrightarrow}B\longrightarrow0,
 \qquad I\ne0.
$$
Suppose that $I$ has no maximal ideals and is not algebraically simple.
Then every maximal ideal of $E$ contains $I$, and
$$
 N\longmapsto\pi^{-1}(N)
$$
is a bijection from the collection of ideals $N\lhd B$ for which $B/N$ is nonzero
and algebraically simple onto the family of maximal ideals of $E$.
Consequently, $E$ has no maximal ideals if and only if $B$ has no
nonzero algebraically simple quotient.
\end{proposition}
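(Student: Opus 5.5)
The key step is to show that any maximal ideal $M$ of $E$ contains $I$. Once that is done, the bijection is ordinary ideal correspondence through $\pi$: ideals of $E$ containing $I$ correspond to ideals of $B$, and $E/\pi^{-1}(N)\cong B/N$.

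Let $M\lhd E$ be maximal and suppose $I\not\subseteq M$. Then $M+I$ is an ideal strictly larger than $M$, so maximality forces $M+I=E$. Put $K=M\cap I$. This is a two-sided ideal of $E$ contained in $I$, so it is also an ideal of $I$. The second isomorphism theorem for rings gives
$$
 I/K=I/(M\cap I)\cong (M+I)/M=E/M .
$$
By Theorem~\ref{thm:dense-closed}, $E/M$ is algebraically simple, since the correspondence argument there shows every maximal ideal has an algebraically simple nonzero quotient. Hence $I/K$ is nonzero and algebraically simple.

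Two cases remain. If $K\neq0$, then $K$ is proper in $I$ and $I/K$ is simple. An ideal $L$ of $I$ with $K\subsetneq L\subsetneq I$ would give a nonzero proper ideal of $I/K$, provided $L/K$ is an ideal of the ring $I/K$, which it is. So $K$ is a maximal ideal of $I$, contradicting the hypothesis. If $K=0$, then $I\cong E/M$ is algebraically simple, contradicting the other hypothesis.

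One subtlety needs care. Ideals of $I$ need not be ideals of $E$, but here we only need ideals of $I$, and $K$, being an ideal of $E$, is automatically one. Also, ``maximal ideal'' in the paper means $0\subsetneq K\subsetneq I$ maximal, which matches the case split above. This containment step is the main point. The rest is routine, and I expect no obstacle beyond checking that the rings may be nonunital, which the isomorphism theorems do not require.

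For the bijection, let $N\lhd B$ with $B/N$ nonzero and algebraically simple. Then $\pi^{-1}(N)$ is a proper ideal of $E$ with $E/\pi^{-1}(N)\cong B/N$ algebraically simple, so it is maximal. It is nonzero because it contains $I\neq0$. Conversely, a maximal $M$ contains $I$, so $M=\pi^{-1}(\pi(M))$. Here $\pi(M)$ is an ideal because $\pi$ is surjective, and $B/\pi(M)\cong E/M$ is nonzero and algebraically simple. Injectivity follows from surjectivity of $\pi$.

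The final equivalence is immediate. Note that $N=0$ is allowed, covering $B$ itself algebraically simple, since $\pi^{-1}(0)=I\neq0$ is then a nonzero proper ideal.
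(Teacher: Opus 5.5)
Your proposal is correct and follows the paper's proof essentially step for step. Maximality forces $I+M=E$, and the second isomorphism theorem gives $I/(I\cap M)\cong E/M$. The case split $I\cap M=0$ versus $I\cap M\neq 0$ then contradicts the two hypotheses on $I$, after which the bijection is the standard ideal correspondence through $\pi$; your explicit check that $\pi^{-1}(N)\supseteq I\neq 0$ is nonzero (needed for the paper's convention that maximal ideals are nonzero) is a detail the paper leaves implicit.
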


\begin{proof}
Let $M$ be a maximal ideal of $E$.  Suppose first that
$I\nsubseteq M$.  The ideal $I+M$ strictly contains $M$, so
maximality of $M$ implies $I+M=E$.  The quotient map restricts to a surjection
$I\to E/M$, and hence
$$
 I/(I\cap M)\cong E/M,
$$
which is nonzero and algebraically simple.  Now, If
$I\cap M=0$, then $I$ is algebraically simple, which is a contradiction to
the hypothesis.  Otherwise, $I\cap M$ has to be a maximal ideal of
$I$, which is also a contradiction to the hypothesis.  Thus $I\subseteq M$ necessarily holds.

For the second claim. Let $N\lhd B$ such that $B/N$ is nonzero and algebraically simple. Then $\pi^{-1}(N)$ has to be maximal in $E$ and thus contains $I$. Moreover, $\pi^{-1}(N)\neq E$. For the converse, let $M$ be maximal in $E$ and define $N=\pi(M)\lhd B$. The induced quotient homomorphism $\tilde{\pi}: E/M\to B/N$ has to be an isomorphism because $E/M$ is algebraically simple. 
Note that this two assignments are inverse, proving the bijection and the last
assertion.
\end{proof}

The hypotheses on $I$ hold for $I=D\otimes\K$ whenever $D$ satisfies
Theorem~\ref{thm:stable-finite}, in particular for $I=\K$.
If $B$ is simple, the condition on $B$ says precisely that it has no
maximal ideals and $\Ped(B)\ne B$.  The mixed examples above therefore
provide choices of $B$ for which the middle algebra $E$ has no
maximal ideals. On the other hand, an algebraically simple quotient  makes $I$ maximal,
as in the sequence
$0\to\K\to\K^\sim\to\mathbb C\to0$.

As an application, we have the following. Denote by $\mathcal{C}$ the class of non-unital $C^*$-algebras $A$ without maximal ideal and $\operatorname{Ped}(A)\neq A$, for example, $C^*$-algebras $A$ with $\operatorname{Ped}(A)\neq A$ satisfying  Theorem \ref{thm:stable-finite}, or Theorem \ref{thm:finite-radius-scale}, or Corollary \ref{cor:hereditary-scale-conditions}.

\begin{theorem}\label{thm: extension preserve C}
  Let  
  \[0\longrightarrow I\longrightarrow E
 \overset{\pi}{\longrightarrow}B\longrightarrow0,
 \qquad I\ne0\]
 be a short exact sequence of $C^*$-algebras. Suppose $I, B$ are in the class $\mathcal{C}$. Then $E\in \mathcal{C}$ as well. Thus $\mathcal{C}$ is closed under extension.
\end{theorem}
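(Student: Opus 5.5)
We must show three things about $E$: it is nonunital, it has no maximal ideals, and $\Ped(E)\neq E$.

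\emph{No maximal ideals.} Since $I\in\mathcal C$, the ideal $I$ has no maximal ideals. It is also not algebraically simple, because $\Ped(I)\neq I$ and $\Ped(I)$ is a nonzero ideal of $I$ (it is dense and $I\neq0$). So Proposition~\ref{prop:nonsimple-sequence} applies. It reduces the claim to showing that $B$ has no nonzero algebraically simple quotient.

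Suppose $N\lhd B$ with $B/N$ nonzero and algebraically simple. Then $N\neq B$. If $N\neq0$, then $N$ is a maximal ideal of $B$ by the ideal correspondence, contradicting $B\in\mathcal C$. If $N=0$, then $B$ itself is algebraically simple. But $\Ped(B)$ is a nonzero ideal (note $B\neq0$, since $B$ has $\Ped(B)\neq B$), so $\Ped(B)=B$, contradicting $B\in\mathcal C$. Hence $E$ has no maximal ideals.

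\emph{Nonunitality.} If $E$ were unital, then $\pi(1_E)$ would be a unit for $B$, contradicting the nonunitality of $B$. Alternatively, a unital $E\neq0$ has a maximal ideal by Zorn's lemma, which we have just ruled out.

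\emph{Properness of $\Ped(E)$.} This is the main step. The approach is to show that $\pi(\Ped(E))\subseteq\Ped(B)$. Then $\Ped(E)=E$ would give $\Ped(B)\supseteq\pi(E)=B$, a contradiction.

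To prove the inclusion, note that $\pi^{-1}(\Ped(B))$ is an algebraic ideal of $E$. It is dense in $E$: for $x\in E$, pick $y_k\in\Ped(B)$ with $y_k\to\pi(x)$, lift $\pi(x)-y_k$ to elements $z_k\in E$ with $\|z_k\|\to0$ (the quotient norm allows norm-controlled lifts), and observe that $x-z_k\in\pi^{-1}(\Ped(B))$ and $x-z_k\to x$. By minimality of the Pedersen ideal among dense ideals (Theorem~\ref{thm:dense-maximal}), $\Ped(E)\subseteq\pi^{-1}(\Ped(B))$, which is the desired inclusion.

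Thus $E$ is nonunital, has no maximal ideals, and satisfies $\Ped(E)\neq E$, so $E\in\mathcal C$.

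The only points needing care are the lifting argument for density and the two cases $N=0$ and $N\neq0$ in the algebraically simple quotient step. The hypothesis $\Ped(I)\neq I$ is used solely to ensure $I$ is not algebraically simple, as required by Proposition~\ref{prop:nonsimple-sequence}.
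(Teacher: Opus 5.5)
Your proof is correct and follows the same route as the paper: Proposition~\ref{prop:nonsimple-sequence} gives the absence of maximal ideals, and the behaviour of the Pedersen ideal under $\pi$ gives $\Ped(E)\neq E$. The paper cites the equality $\pi(\Ped(E))=\Ped(B)$ from the literature, whereas you prove directly the one inclusion actually needed, $\pi(\Ped(E))\subseteq\Ped(B)$, via density of $\pi^{-1}(\Ped(B))$ and minimality of $\Ped(E)$; you also make explicit what the paper leaves implicit, namely that $I$ is not algebraically simple, the case $N=0$, and nonunitality of $E$.
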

\begin{proof}
    The first statement follows from the definition of $\mathcal{C}$ and Proposition \ref{prop:nonsimple-sequence}. For the second statement. It is shown in \cite[Corollory 6]{Pideal} that $\pi(\operatorname{Ped}(E))=\operatorname{Ped}(B)$. Thus, if $\operatorname{Ped}(E)=E$, then necessarily one has $\operatorname{Ped}(B)=B$, which is a contradiction.
\end{proof}

The same argument applies inductively to finite filtrations whose
nonzero subquotients have no maximal ideals and are not algebraically
simple.

\section{Acknowledgement}
The project began in May 2025, during the second author's visit to Dalian University of Technology, and many of the original ideas are due entirely to humans. But during  the development of this work, the authors used GPT 5.6 sol and GPT 6 astra for searching necessary references, examining mathematical arguments, and language editing.  The authors have checked and substantially revised all mathematical arguments generated by AI and take full responsibility for the final version.

\setlength{\bigskipamount}{6pt}
\end{document}